\title[Fractal uncertainty for transfer operators]%
{Fractal uncertainty for transfer operators}
\author{Semyon Dyatlov}
\email{dyatlov@math.mit.edu}
\address{Department of Mathematics, Massachusetts Institute of Technology,
Cambridge, MA 02139}
\address{Department of Mathematics, University of California, Berkeley, CA 94720}
\author{Maciej Zworski}
\email{zworski@math.berkeley.edu}
\address{Department of Mathematics, University of California, Berkeley, CA 94720}
\newcommand{\rsa}{\rightsquigarrow}
\begin{document}

\begin{abstract}
We show directly that the fractal uncertainty principle 
of Bourgain--Dyatlov~\cite{fullgap} implies that there exists 
$ \sigma > 0 $ for which the Selberg zeta function~\eqref{eq:Selb} for a convex co-compact hyperbolic surface has only finitely many zeros with $ \Re s \geq \frac12 - \sigma$. That eliminates advanced microlocal techniques of Dyatlov--Zahl~\cite{hgap} though we stress
that these techniques are still needed for resolvent bounds and for possible generalizations to the case of non-constant curvature.
\end{abstract}

\maketitle

\addtocounter{section}{1}
\addcontentsline{toc}{section}{1. Introduction}

The purpose of this note is to give a new explanation of the connection between the {\em fractal uncertainty principle}, which is a statement in harmonic analysis, and the existence of {\em zero free strips for Selberg zeta functions}, which is a statement in geometric spectral theory/dynamical systems. 
The connection is proved via the {\em transfer operator} which is a well known object in thermodynamical formalism of chaotic dynamics.

To explain the fractal uncertainty principle
we start with its \emph{flat} version, given by~\eqref{eq:fup} below. Let $ X \subset
[ -1 , 1 ] $ be a $ \delta $-regular set in the following sense:
there exists a Borel measure $ \mu $ supported on $ X $ and a constant $ C_R$ such that 
for each interval $ I $ centered
on $X$ of size $ | I | \leq 1 $, we have $ C_R^{-1}|I|^\delta\leq \mu ( I ) \leq C_R |I|^\delta $.

Bourgain--Dyatlov~\cite[Theorem 4]{fullgap} proved that 
when $ \delta < 1 $, there exist $ \beta > 0 $ and $ C_1 $ depending only on $ \delta,C_R $ such that for all~$ f \in L^2 ( \mathbb R ) $
\begin{equation}
\label{eq:fup}
\supp \mathcal F_h f \subset X(h) \ \Longrightarrow \ 
\| f \|_{L^2 ( X(h) ) } \leq C_1 h^{\beta } \| f \|_{ L^2 ( \mathbb R ) } , 
\end{equation}
where $ \mathcal F_h f := \hat f ( \xi/h ) $ is the semiclassical Fourier transform \eqref{eq:semif}
and
$$
X(h):=X+[-h,h]
$$
denotes the $h$-neighborhood of~$X$. Roughly
speaking, \eqref{eq:fup} quantifies the statement that 
a function and its Fourier transform cannot both concentrate on a fractal set.

To explain the spectral geometry side, let $M=\Gamma\backslash\mathbb H^2$ be a convex co-compact hyperbolic surface, that is, a non-compact hyperbolic surface with a finite number of funnel ends
and no cusps -- see \cite[\S 2.4]{BorthwickBook}. 
The Selberg zeta function is defined by 
\begin{equation}
\label{eq:Selb}
Z_M(s)=\prod_{\ell\in\mathscr L_M}\prod_{k=0}^\infty \big(1-e^{-(s+k)\ell}\big),\quad\Re s\gg 1,
\end{equation}
and it continues as an entire function to $ \mathbb C $. 
Here $\mathscr L_M$ denotes the set of lengths of primitive closed geodesics on $M$ with multiplicity -- see \cite[Chapter 10]{BorthwickBook}. The zeros of $ Z_M $ enjoy many interpretations, in particular as
{\em quantum resonances} of the Laplacian on $ M $ -- see \cite{revres} 
for a general introduction and references. In particular, finding {\em resonance free} regions has a long tradition and applications in many settings.

The \emph{limit set}, $ \Lambda_\Gamma $, is defined as the set of accumulation points of 
orbits of $ \Gamma$ acting on~$\mathbb H^2$, see also~\eqref{e:limit-set}.
It is a subset of the boundary of $ \mathbb H^2 $ at infinity,
so in the Poincar\'e disk model of $ \mathbb H^2 $ we have
$ \Lambda_\Gamma \subset \mathbb S^1 $. 
The set $\Lambda_\Gamma$ is $\delta$-regular where $\delta\in [0,1)$ is the
exponent of convergence of Poincar\'e series, see~\cite[Lemma~14.13]{BorthwickBook}.

\renewcommand\thefootnote{\dag}%

The \emph{hyperbolic} version of fractal uncertainty principle was
formulated by Dyatlov--Zahl~\cite[Definition 1.1]{hgap}.
Define the operator $\mathcal B_\chi=\mathcal B_\chi(h)$
on~$L^2(\mathbb S^1)$ by
\begin{equation}
  \label{e:B-chi}
\mathcal B_\chi(h)f(y)=(2\pi h)^{-1/2}\int_{\mathbb S^1}|y-y'|^{-2i/h}\chi(y,y')f(y')\,dy',
\end{equation}
where $ | y - y'  |$ is the Euclidean distance between $ y,y' \in 
\mathbb S^1 \subset \mathbb R^2.%
$\footnote{Note the sign change from~\cite[(1.6)]{hgap}. It is convenient for us and it does not change the norm.}
Here
\begin{equation}
\label{eq:cutoff}
\chi\in \CIc(\mathbb S^1_\Delta),\quad
\mathbb S^1_\Delta:=\{(y,y')\in \mathbb S^1\times\mathbb S^1\mid y\neq y'\}.
\end{equation}
We say that $\Lambda_\Gamma$ satisfies \emph{(hyperbolic) fractal uncertainty principle
with exponent $\beta\geq 0$} if for each $\varepsilon>0$ there exists $\rho<1$ such that
for all $C_0>0$, $\chi\in\CIc(\mathbb S^1_\Delta)$ we have
\begin{equation}
  \label{e:fup}
\|\indic_{\Lambda_\Gamma(C_0h^\rho)}\mathcal B_\chi(h)\indic_{\Lambda_\Gamma(C_0h^\rho)}\|_{L^2(\mathbb S^1)\to L^2(\mathbb S^1)}
=\mathcal O(h^{\beta-\varepsilon})\quad\text{as }h\to 0.
\end{equation}
This hyperbolic fractal uncertainty principle with $\beta=\beta(\Gamma)>0$
was established for arbitrary convex co-compact groups by Bourgain--Dyatlov~\cite{fullgap} by proving the flat version~\eqref{eq:fup} and showing that it implies~\eqref{e:fup}. 
It followed earlier partial results of
Dyatlov--Zahl~\cite{hgap}.

With this in place we can now state our main result:

\medskip
\noindent
{\bf Theorem.} 
{\em Assume that $M=\Gamma\backslash\mathbb H^2$ is a convex co-compact hyperbolic surface
and the limit set $\Lambda_\Gamma$ satisfies fractal uncertainty principle with exponent $\beta$
in the sense of~\eqref{e:fup}.
Then $M$ has an \textbf{essential spectral gap}
of size $\beta-$, that is for each $\varepsilon>0$ the Selberg zeta function of $M$
has only finitely many zeroes in $\{\Re s\geq{1\over 2}-\beta+\varepsilon\}$.}
\smallskip

\Remark Bourgain--Dyatlov~\cite{hyperfup} showed that when $\delta>0$, the set $\Lambda_\Gamma$
also satisfies the fractal uncertainty principle with exponent $\beta>{1\over 2}-\delta$
which only depends on~$\delta$. The resulting essential spectral gap is an improvement
over the earlier work of Naud~\cite{NaudGap} which gave a gap of size
$\beta>{1\over 2}-\delta$ which also depends on the surface.
See also Dyatlov--Jin~\cite{regfup}.

A stronger theorem was proved in \cite{hgap} using fine microlocal methods
which included second microlocalization and Vasy's method for meromorphic continuation (see~\cite[\S 3.1]{revres} and references given there). 
In addition to showing a zero 
free strip, it provided a bound on the scattering resolvent, see~\cite[Theorem 3]{hgap}. Having such bounds is essential to applications~-- see~\cite[\S 3.2]{revres}. It would be interesting to see if one can use the methods of this paper to obtain bounds on the scattering resolvent. However, generalizations to non-constant curvature are likely to be based on the microlocal techniques of~\cite{hgap}.
The result of~\cite{hgap} also applies to higher-dimensional hyperbolic quotients.
The proof of the present paper can be adapted to higher dimensional cases where
the limit set is totally discontinuous (such as Schottky quotients).

The present paper instead uses \emph{transfer operators}~-- see the outline below and~\S\ref{s:transfer}. The identification of $M $ with a quotient by a Schottky group~-- see~\S \ref{s:schottky}~-- allows to combine simple semiclassical insights with the combinatorial structure of the Schottky data.

\smallsection{Outline of the proof}
The proof is based on a well known identification of zeros of $ Z_M ( s ) $ with the values of~$ s $ at which 
the {\em Ruelle transfer operator}, $ \mathcal L_s $, has eigenvalue~$ 1 $ -- see \eqref{eq:delLs}. Referring to \S \ref{s:transfer} and
\cite[\S 15.3]{BorthwickBook} for precise definitions, we have 
\begin{equation} \label{eq:LsBS}
\mathcal L_s u ( z ) = \sum_{ w\colon B w = z } B' (  w)^{-s} u (  w ) , \end{equation}
where $ B $ is an expanding {\em Bowen--Series map} and $ u $ is a holomorphic function on a family of disjoint disks symmetric with respect to the real axis.  
\renewcommand\thefootnote{\ddag}%

\begin{figure}
\includegraphics{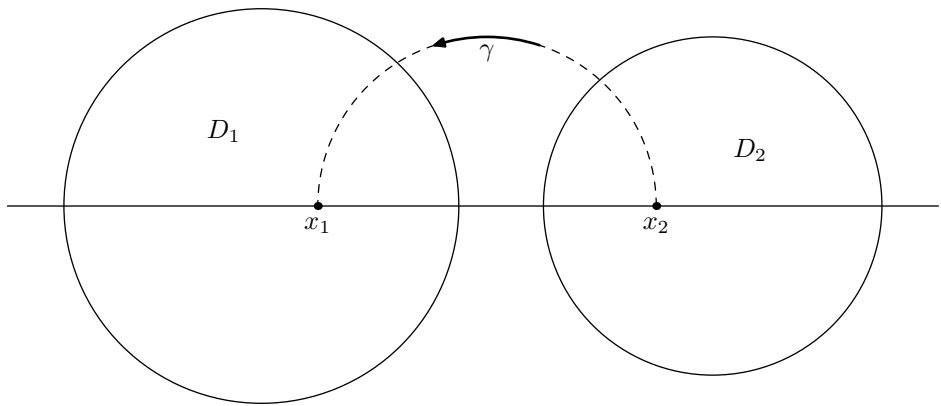}
\caption{The disks $D_1,D_2$ and the transformation $\gamma$
in the trivial case~\eqref{e:outline-display}.}
\label{f:basic-schottky}
\end{figure}
We outline the idea of the proof in the \emph{trivial} case (the zeros of 
$ Z_M ( s ) $ can be computed explicitly -- see~\cite[(10.32)]{BorthwickBook}) when 
there are only two disks $D_1,D_2$ and $ \gamma \in \SL(2,\mathbb R)$,
a linear fractional transformation preserving the upper half plane,
maps one disk onto the complement of the other disk,
see Figure~\ref{f:basic-schottky}:%
 \footnote{This trivial case can be reduced to $ \gamma ( z ) = kz $, $ k > 1 $, but we do not want to stress that point.} 
\begin{equation}
  \label{e:outline-display}
 D_2 = \dot {\mathbb C}
 \setminus \gamma^{-1}(D_1^\circ  ) , \quad 
B(z)=\begin{cases} \gamma^{-1}(z),&z\in D_1;\\
\gamma(z),&z\in D_2.
\end{cases}
\end{equation}
Denote $I_j:=D_j\cap\mathbb R$.
In the case~\eqref{e:outline-display} the limit set $\Lambda_\Gamma$ consists of the fixed points of~$\gamma $,  
\[ 
\Lambda_\Gamma = \{ x_1, x_2 \} \subset \dot {\mathbb R }  , \ \  x_j \in I_j ,\ \ \gamma ( x_j ) = x_j , 
\]
and the fractal uncertainty 
principle \eqref{e:fup} holds with $\beta={1\over 2}$ by a \emph{trivial volume bound}: 
\begin{equation}
  \label{e:intro-fupuse}
\begin{aligned} 
\|\indic_{\Lambda_\Gamma (C_0h^\rho)}\mathcal B_\chi(h)&\indic_{\Lambda_\Gamma(C_0h^\rho)}\|_{L^2 \to L^2}  \\
& \leq \| \indic_{\Lambda_\Gamma (C_0h^\rho)} \|_{ L^\infty \to L^2 } 
\| \mathcal B_\chi(h) \|_{L^1 \to L^\infty } \| \indic_{\Lambda_\Gamma(C_0h^\rho)} \|_{ L^2 \to L^1 } \\
& \leq C h^{\frac \rho 2} \times h^{-\frac12} \times h^{\frac \rho 2 } 
=\mathcal O(h^{\frac12-\varepsilon}), \quad \varepsilon=1-\rho. \ 
 \end{aligned}
\end{equation}
The operator \eqref{eq:LsBS} splits as a direct sum over operators on the two disks and hence we only need to consider a simplified transfer operator
\begin{equation}
  \label{eq:defLsu}
\mathcal L_s u ( z ) = \gamma' ( z )^s u ( \gamma ( z ) ) , \quad u \in \mathcal H ( D_1 ) , 
\end{equation}
where $ \mathcal H ( D_1 ) $ is the Bergman space of holomorphic functions in $ L^2 ( D_1 )$.

Given~\eqref{e:intro-fupuse} we want to show, in a complicated way which generalizes, that the equation $ \mathcal L_s u = u $, $ u \in \mathcal H ( D_1 ) $, has no non-trivial solutions for 
\begin{equation*}
s=\tfrac{1} 2-\nu+  i h^{-1}   \  
\text{  \label{e:s-given}
with $ \ \nu < \tfrac12 - \varepsilon $. }
\end{equation*}
Thus assume that for such an $ s $, $ \mathcal L_s u = u $. The first observation is that $ u|_{ \mathbb R}  $  is semiclassicaly localized to bounded frequencies: that means that for all $ \chi \in C^\infty_{\rm{c}} ( I_1 ) $, 
\begin{equation}
\label{eq:compm} \mathcal F_h (\chi u) ( \xi) = 
\mathcal O (  h^\infty |\xi|^{-\infty}) \quad \text{for } |\xi| \geq  C .
\end{equation}
It is here
that holomorphy of $ u $ is used: using the maximum principle we show that
$ \sup_{D_1} | u | \leq e^{ C/h} \sup_{ I_1} |u | $ and derive the Fourier transform bound from this.
See Lemmas~\ref{l:a-priori} and~\ref{l:u-fourier}.

From now on we work only on the real axis.
In the outline we will use concepts from semiclassical analysis but we stress that the actual proofs in the paper are  self-contained.

To connect the model used here to $ \mathcal B_\chi ( h ) $, which acts on 
$ \mathbb S^1 $, we identify $ \mathbb S^1 $ with the extended real axis
$ \dot { \mathbb R} $ -- see \S \ref{s:integral-operator}. Transplanted to 
$ \dot {\mathbb R} $, $ \mathcal B_\chi ( h ) $ is a semiclassical Fourier integral operator with the phase
$\Phi(x,x')$ defined in~\eqref{eq:phase} and hence 
associated to the canonical transformation
\[
\varkappa : T^* \dot {\mathbb R} \to 
T^* \dot {\mathbb R } , \quad \varkappa :( x', -\partial_{x'}\Phi ) \mapsto ( x , \partial_x\Phi ).
\]
We stress that $\varkappa$ is a global diffeomorphism and
for $(x,\xi)=\varkappa(x',\xi')$ we have
$ x \neq x' $. That means that for the action on 
compactly microlocalized functions,  the singularity removed by the
cutoff~\eqref{eq:cutoff} is irrelevant and we can consider a simpler operator
$ \mathcal B ( s )$ defined by, essentially, removing $ \chi $.
In the circle model it has the form
$$
\mathcal B(s) f(y)=(2\pi h)^{-1/2}\int_{\mathbb S^1} |y-y'|^{-2s} f(y')d y'.
$$
See~\S \ref{s:integral-operator} for details.
This operator has a nice equivariance property
which is particularly simple for the operator~\eqref{eq:defLsu}:
denoting $\langle x\rangle:=\sqrt{1+x^2}$ we have
\begin{equation}
  \label{e:intro-equiv}
\widetilde{\mathcal L}_s\mathcal B ( s ) = \mathcal B ( s )
\widetilde{\mathcal L}_{1-s}\quad\text{where}\quad
\widetilde{\mathcal L}_s:=\langle x\rangle^{2s}\mathcal L_s \langle x\rangle^{-2s}:
L^2(\dot{\mathbb R})\to L^2(\dot{\mathbb R}).
\end{equation}
See Lemma~\ref{l:B-equiv} for the general version. Here we only say that what lies behind this identity is the following formula valid for 
linear fractional transformations $ \varphi $:
\begin{equation}
\label{eq:rho}   | \varphi ( x ) - \varphi ( y ) |^{-2} | \varphi' ( x ) | = | x - y |^{-2} | \varphi' ( y ) |^{-1} .
\end{equation}

To use \eqref{e:intro-equiv}, we put $\gamma_N:=\gamma^N $, the $N$th iterate where $N\sim \log(1/h)$ is chosen so that
$ | \gamma_N ( I_1) | \sim h^\rho $. Then
$$
u(x)=\mathcal L_s^Nu(x)=\gamma_N'(x)^s u(\gamma_N(x)),\quad x\in I_1.
$$
Choose a cutoff function $\chi_N \in C_{\rm{c}}^\infty ( ( x_1- C_1 h^\rho, x_1+C_1 h^\rho ) ) $ which is equal to~1 in a neighbourhood of 
$ \gamma_N (I_1) $. We then  put
$u_N:=\langle x\rangle^{2s}\chi_N u$,
so that 
$\langle x\rangle^{2s}u=\widetilde{\mathcal L}_s^N u_N$ on $I_1$.

Since $\rho<1$, $u_N$ remains compactly microlocalized in the
sense of~\eqref{eq:compm} but in addition it is concentrated at $ x= x_1$:
$$
\WFh(u_N )\subset \{x=x_1,\ |\xi|\leq C\}.
$$
The operator $\mathcal B(s)$ is elliptic and we denote its {\em microlocal inverse} 
by $\mathcal B(s)^{-1}$ noting that it is a Fourier integral operator
associated to $\varkappa^{-1}$. Hence, 
$$
\begin{gathered}
u_N=\mathcal B(s)v_N+\mathcal O(h^\infty),\quad
v_N:=\mathcal B(s)^{-1}u_N, \\
\WFh(v_N)\subset \varkappa^{-1}(\WFh(u_N))\subset \{x\neq x_1\}.
\end{gathered}
$$
Therefore, changing $v_N$ by $\mathcal O(h^\infty)$, we may assume that
$\supp v_N\subset J\subset\dot{\mathbb R}$
where $J$ is a fixed `interval' on $ \dot{\mathbb R} \simeq 
\mathbb S^1 $ and $ x_1 \notin J $.

Now we use the equivariance property~\eqref{e:intro-equiv}: modulo an $\mathcal O(h^\infty)$ error, we have
$$
\begin{gathered}
\langle x\rangle^{2s}u=\widetilde{\mathcal L}_s^Nu_N=\widetilde{\mathcal L}_s^N \mathcal B(s)v_N
=\mathcal B(s)w_N\quad\text{on }I_1,\\
w_N:=\widetilde{\mathcal L}_{1-s}^N v_N, \quad
\supp w_N\subset \widetilde J:=\gamma^{-N}(J) .
\end{gathered}
$$
Since $J$ lies a fixed distance away from $ x_1$, $ \widetilde J = \gamma^{-N} ( J )$ lies in an $h^\rho$ sized interval centered at the repelling point, $ x_2 $, of the transformation $\gamma$. The change of variables in the integrals shows that
$$
\|w_N\|_{L^2}\sim h^{-\rho\nu}\|v_N\|_{L^2}\sim h^{-\rho\nu}\|u_N\|_{L^2}.
$$
Now we have
$$
u_N=\chi_N\mathcal B(s)\indic_{\widetilde J}w_N+\mathcal O(h^\infty)
$$
so the fractal uncertainty principle~\eqref{e:intro-fupuse} gives
$$
\|u_N\|_{L^2}\leq \|\chi_N\mathcal B(s)\indic_{\widetilde J}\|_{L^2\to L^2}
\|w_N\|_{L^2} \leq Ch^{1/2-\varepsilon-\rho\nu}\|u_N\|_{L^2}+\mathcal O(h^\infty).
$$
Since $\nu<{1\over 2}-\varepsilon$ and we can take $ \rho = 1 - \varepsilon $, 
we obtain $u_N\equiv 0$ if $h$ is small enough, thus
$u\equiv 0$ and the proof is finished.

In the case of non-trivial Schottky groups similar ideas work but with combinatorial complications.  
We only make a general comment that for 
iterates $ \gamma_N $ (or more generally iterates $ \gamma_{\mathbf a } $
-- see \S \ref{s:schottky}), $  \partial_x \log | \gamma_{N} ' ( x) | $ for 
$  x $ in a small $h$-independent neighbourhood of~$ x_1 $ is essentially equal to $ - 2 \partial_x \log | x - \gamma_N^{-1} ( x_0 )  | $, $ 
x_0 \neq x_1 $.
This follows from \eqref{eq:rho} with $y:=\gamma_N^{-1}(x_0)$, $\varphi:=\gamma_N$. Any generalization of our method has to replace this explicit formula by 
writing $ \partial_x \log | \gamma_N ' ( x ) | $ approximately as
$ \partial_x \Phi ( x , y) $, $ y = \gamma_N^{-1} ( x_0 ) $, $ x_0 \neq x_1$, with $ \Phi $ generating a canonical tranformation.

\medskip\noindent\textbf{Notation:} 
We write $ f = 
\mathcal O ( g )_H $ for $ \| f \|_H \leq g $. In particular, 
$ f = \mathcal O ( h^\infty )_H  $ means that for any $ N $ there exists
$ C_N $ such that $ \|f \|_H \leq C_N h^N $. The norm 
$ \| \bullet \| $ refers to the $ L^2 $ norm but different $ L^2 $ norms are used ($ L^2 ( \mathbb R ) $, $ L^2 ( \mathbb S^1 ) $, $ L^2 ( \Omega ) $) and they are either specified or clear from the context.
With some of abuse of notation, $C_\Gamma$ denotes large constants
which only depend on a fixed Schottky data of $ \Gamma $ (see \S \ref{s:schottky}) and whose exact value may vary from place to place.
We denote $\langle x\rangle:=\sqrt{1+x^2}$.

\medskip\noindent\textbf{Acknowledgements.}
We would like to thank the anonymous referee for suggestions to improve the manuscript.
This research was conducted during the period SD served as
a Clay Research Fellow. MZ was supported
by the National Science Foundation grant DMS-1500852 and by a Simons Fellowship. 

\section{Ingredients}

\subsection{Schottky groups}
  \label{s:schottky}

In this section we briefly review properties of Schottky groups,
referring the reader to~\cite[\S15.1]{BorthwickBook} for more details.
We use notation similar to~\cite[\S2.1]{hyperfup}.

The group $\SL(2,\mathbb R)$ acts on the extended complex plane
$\dot{\mathbb C}:=\mathbb C\cup\{\infty\}$ by M\"obius transformations:
$$
\gamma=\begin{pmatrix} a & b \\ c & d \end{pmatrix}\in \SL(2,\mathbb R),\
z\in\dot{\mathbb C}\quad \Longrightarrow\quad
\gamma(z)={az+b\over cz+d}.
$$
Denote by $\mathbb H^2\subset\mathbb C$ the upper half-plane model of the hyperbolic plane.
Then $\SL(2,\mathbb R)$ acts on $\mathbb H^2$ by isometries.

A \emph{Schottky group} is a convex co-compact subgroup $\Gamma\subset\SL(2,\mathbb R)$
constructed in the following way (see Figure~\ref{f:schottky}):
\begin{figure}
\includegraphics{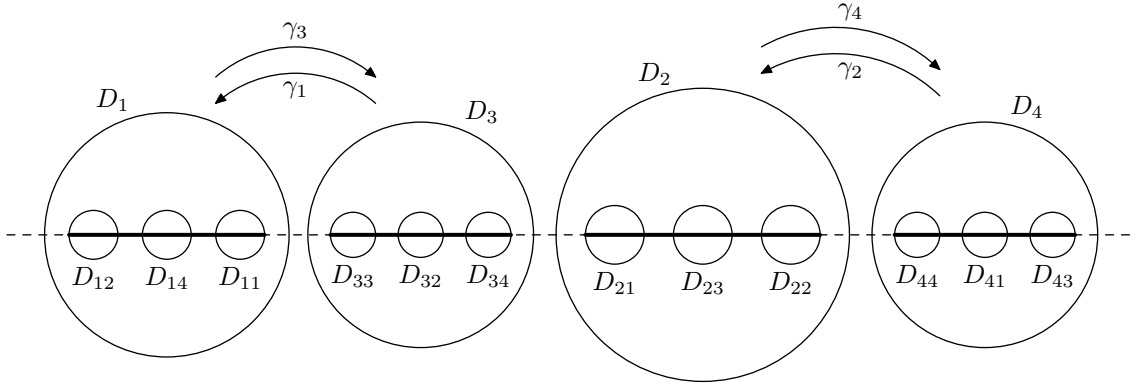}
\caption{A Schottky structure with $r=2$. The real line is dashed.
The solid intervals are $I'_1,I'_3,I'_2,I'_4$.}
\label{f:schottky}
\end{figure}
\begin{itemize}
\item Fix $r\in\mathbb N$ and nonintersecting closed disks $D_1,\dots,D_{2r}\subset\mathbb C$
centered on the real line.
\item Define the \emph{alphabet} $\mathcal A=\{1,\dots,2r\}$ and for each $a\in\mathcal A$,
denote
$$
\overline a:=\begin{cases}
a+r,& 1\leq a\leq r;\\
a-r,& r+1\leq a\leq 2r.
\end{cases}
$$
\item Fix group elements $\gamma_1,\dots,\gamma_{2r}\in \SL(2,\mathbb R)$ such that for all $a\in\mathcal A$
\begin{equation}
  \label{e:gamma-a-property}
\gamma_a(\dot{\mathbb C}\setminus D_{\overline a}^\circ)=D_a,\quad
\gamma_{\overline a}=\gamma_a^{-1}.
\end{equation}
(In the notation of \cite[\S15.1]{BorthwickBook} we have $ \gamma_a = S_a^{-1}$.)
\item Let $\Gamma\subset\SL(2,\mathbb R)$ be the  group generated by $\gamma_1,\dots,\gamma_r$; it is a free group on $ r $ generators 
\cite[Lemma 15.2]{BorthwickBook}.
\end{itemize}

Every convex co-compact hyperbolic surface $M$ can be viewed as the quotient
of~$\mathbb H^2$ by a Schottky group $\Gamma$,
see for instance~\cite[Theorem~15.3]{BorthwickBook}.
Note that the complement of $\bigsqcup_a D_a$ in $\mathbb H^2$
is a fundamental domain.
We  fix a Schottky representation for a given hyperbolic surface $M$ 
and refer to $ D_1 , \dots, D_{2r} $ and $ \gamma_1 , \dots, \gamma_{2r} $ as {\em Schottky data}.

In several places we will use results of~\cite[\S 2]{hyperfup} which 
can be read independently of the rest of~\cite{hyperfup}. 
In particular, 
we use combinatorial notation for indexing the words
in the free group $\Gamma$ and the corresponding disks:
\begin{itemize}
\item For $n\in\mathbb N_0$, define $\mathcal W_n$, the set of words of length~$n$, as follows:
\begin{equation}
\label{eq:Wn}
\mathcal W_n:=\{a_1\dots a_n\mid a_1,\dots,a_n\in\mathcal A,\quad
a_{j+1}\neq\overline{a_j}\quad\text{for }j=1,\dots,n-1\}.
\end{equation}
\item Denote by $\mathcal W:=\bigcup_n\mathcal W_n$ the set of all words. For
$\mathbf a\in\mathcal W_n$, put $|\mathbf a|:=n$. Denote the empty word by $\emptyset$
and put $\mathcal W^\circ:=\mathcal W\setminus \{\emptyset\}$.
\item For $\mathbf a=a_1\dots a_n\in\mathcal W$, put
$\overline{\mathbf a}:=\overline{a_n}\dots\overline{a_1}\in \mathcal W$.
For a set $Z\subset \mathcal W$, put
\begin{equation}
  \label{e:Z-bar}
\overline Z:=\{\overline{\mathbf a}\mid \mathbf a\in Z\}.
\end{equation}
\item For $\mathbf a=a_1\dots a_n\in\mathcal W^\circ$, put $\mathbf a':=a_1\dots a_{n-1}\in\mathcal W$.
Note that $\mathcal W$ forms a tree with root $\emptyset$ and each $\mathbf a\in\mathcal W^\circ$ having parent
$\mathbf a'$.
\item For $\mathbf a=a_1\dots a_n,\mathbf b=b_1\dots b_m\in\mathcal W$, we write $\mathbf a\to \mathbf b$
if either $\mathbf a=\emptyset$, or $\mathbf b=\emptyset$, or $a_n\neq\overline{b_1}$. Under this condition
the concatenation $\mathbf{ab}$ is a word.
We write $\mathbf a\rsa\mathbf b$ if $\mathbf a,\mathbf b\in\mathcal W^\circ$
and $a_n=b_1$. In the latter case $\mathbf a'\mathbf b\in\mathcal W$.
\item For $\mathbf a,\mathbf b\in\mathcal W$, we write $\mathbf a\prec\mathbf b$
if $\mathbf a$ is a prefix of $\mathbf b$, i.e. $\mathbf b=\mathbf{ac}$
for some $\mathbf c\in\mathcal W$.
\item Define the following one-to-one correspondence between $\mathcal W$ and the group $\Gamma$:
$$
\mathcal W \ni a_1\dots a_n = \mathbf a \quad\longmapsto\quad
\gamma_{\mathbf a}:=\gamma_{a_1}\cdots\gamma_{a_n}\in\Gamma.
$$
Note that
$\gamma_{\mathbf a\mathbf b}=\gamma_{\mathbf a}\gamma_{\mathbf b}$ when $\mathbf a\to\mathbf b$,
$\gamma_{\overline{\mathbf a}}=\gamma_{\mathbf a}^{-1}$,
and $\gamma_\emptyset$ is the identity.
\item For $\mathbf a=a_1\dots a_n\in\mathcal W^\circ$, define the disk centered on the real line
(see Figure~\ref{f:schottky})
$$
D_{\mathbf a}:=\gamma_{\mathbf a'}(D_{a_n})\subset\mathbb C.
$$
If $\mathbf a\prec\mathbf b$, then $D_{\mathbf b}\subset D_{\mathbf a}$. On the
other hand, if $\mathbf a\not\prec\mathbf b$ and $\mathbf b\not\prec\mathbf a$,
then $D_{\mathbf a}\cap D_{\mathbf b}=\emptyset$.
Define the interval 
$$
I_{\mathbf a}:=D_{\mathbf a}\cap\mathbb R
$$
and denote by $|I_{\mathbf a}|$ its length (which is equal to the diameter
of $D_{\mathbf a}$).
\item For $a\in\mathcal A$, define the interval $I'_a\subset I_a^\circ$ as the convex
hull of the union $\bigsqcup_{b\in\mathcal A,\,a\to b}I_{ab}$, see Figure~\ref{f:schottky}.
More generally, for $\mathbf a=a_1\dots a_n\in \mathcal W^\circ$ define
\begin{equation}
  \label{e:I-a-prime}
I'_{\mathbf a}:=\gamma_{\mathbf a'}(I_{a_n}')\subset I_{\mathbf a}^\circ.
\end{equation}
Note that $I'_{\mathbf a}\supset I_{\mathbf b}$ for any $\mathbf b\in\mathcal W^\circ$ such that
$\mathbf a\prec\mathbf b$, $\mathbf a\neq \mathbf b$.
\item Denote
\begin{equation}
  \label{e:D-def}
\mathbf D:=\bigsqcup_{a\in\mathcal A}D_a\subset \mathbb C,\quad
\mathbf I:=\bigsqcup_{a\in\mathcal A}I_a=\mathbf D\cap\mathbb R,\quad
\mathbf I':=\bigsqcup_{a\in\mathcal A}I'_a\subset \mathbf I^\circ.
\end{equation}
\item The \emph{limit set} is given by
\begin{equation}
  \label{e:limit-set}
\Lambda_\Gamma:=\bigcap_{n\geq 1}\bigsqcup_{\mathbf a\in\mathcal W_n}D_{\mathbf a}\subset\mathbb R.
\end{equation}
The fact that $\Lambda_\Gamma\subset \mathbb R$ follows from the
\emph{contraction property}~\cite[\S2.1]{hyperfup}
\begin{equation}
  \label{e:contracting}
|I_{\mathbf a}|\leq C_\Gamma(1-C_\Gamma^{-1})^{|\mathbf a|}\quad\text{for all }\mathbf a\in\mathcal W^\circ.
\end{equation}
\end{itemize}
We finish this section with a few estimates.
We start with the following derivative bound which is the complex version of~\cite[Lemma~2.5]{hyperfup}:
\begin{lemm}
  \label{l:gamma-der}
For all $\mathbf a=a_1\dots a_n\in \mathcal W^\circ$ and $z\in D_{a_n}$ we have
\begin{equation}  
  \label{e:gamma-der}
C_\Gamma^{-1}|I_{\mathbf a}|\leq |\gamma_{\mathbf a'}'(z)|
\leq C_\Gamma |I_{\mathbf a}|.
\end{equation}
\end{lemm}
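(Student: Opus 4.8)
\emph{Plan.} This is the complex analogue of the bounded distortion property of conformal dynamics, and I would prove it in two steps: first show that $|\gamma_{\mathbf a'}'|$ varies by at most a bounded factor over the fixed disk $D_{a_n}$, and then compare this common size with the diameter of the image disk $D_{\mathbf a}=\gamma_{\mathbf a'}(D_{a_n})$. For the first step, write $\mathbf a=a_1\dots a_n$, so $\gamma_{\mathbf a'}=\gamma_{a_1}\circ\cdots\circ\gamma_{a_{n-1}}$, and for $z\in D_{a_n}$ introduce the intermediate points $\zeta_k(z):=\gamma_{a_{k+1}}\circ\cdots\circ\gamma_{a_{n-1}}(z)$, $1\leq k\leq n-1$ (with $\zeta_{n-1}(z)=z$). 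Since $a_{k+1}\dots a_n$ is a suffix of $\mathbf a$, hence a word in $\mathcal W_{n-k}$, one has $\zeta_k(z)\in D_{a_{k+1}\dots a_n}\subset D_{a_{k+1}}$; and as $a_{k+1}\neq\overline{a_k}$ the disk $D_{a_{k+1}}$ is disjoint from $D_{\overline{a_k}}$ and so avoids the pole of $\gamma_{a_k}$. The chain rule then gives $\log|\gamma_{\mathbf a'}'(z)|=\sum_{k=1}^{n-1}\log|\gamma_{a_k}'(\zeta_k(z))|$. Subtracting the expansions at two points $z,z'\in D_{a_n}$, I would bound the $k$-th difference using that $\log|\gamma_{a_k}'|$ is smooth with gradient $\leq C_\Gamma$ on the fixed disk $D_{a_{k+1}}$, together with $|\zeta_k(z)-\zeta_k(z')|\leq\operatorname{diam}D_{a_{k+1}\dots a_n}=|I_{a_{k+1}\dots a_n}|\leq C_\Gamma(1-C_\Gamma^{-1})^{n-k}$ from the contraction property~\eqref{e:contracting}; summing the geometric series in $n-k$ gives $\big|\log|\gamma_{\mathbf a'}'(z)|-\log|\gamma_{\mathbf a'}'(z')|\big|\leq C_\Gamma$ uniformly in $\mathbf a$.

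\emph{Comparison with $|I_{\mathbf a}|$.} For the second step I would use the elementary identity $|\varphi(z_1)-\varphi(z_2)|=|z_1-z_2|\,|\varphi'(z_1)|^{1/2}|\varphi'(z_2)|^{1/2}$ valid for a M\"obius map $\varphi$ and points other than its pole (and $\gamma_{\mathbf a'}$ has no pole on $D_{a_n}$, since it maps $D_{a_n}$ onto the bounded disk $D_{\mathbf a}$). Taking $z_1,z_2$ at the ends of a diameter of $D_{a_n}$ gives $|I_{\mathbf a}|=\operatorname{diam}D_{\mathbf a}\geq(\operatorname{diam}D_{a_n})\inf_{D_{a_n}}|\gamma_{\mathbf a'}'|$, and the supremum over $z_1,z_2\in D_{a_n}$ gives $|I_{\mathbf a}|\leq(\operatorname{diam}D_{a_n})\sup_{D_{a_n}}|\gamma_{\mathbf a'}'|$. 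Since $D_1,\dots,D_{2r}$ is a fixed finite family, $\operatorname{diam}D_{a_n}$ lies between two positive constants depending only on the Schottky data, so combining with the first step yields $C_\Gamma^{-1}|I_{\mathbf a}|\leq|\gamma_{\mathbf a'}'(z)|\leq C_\Gamma|I_{\mathbf a}|$ for every $z\in D_{a_n}$, which is~\eqref{e:gamma-der}.

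\emph{Main obstacle.} There is no analytic difficulty here; the only care needed is the combinatorial bookkeeping in the first step --- checking that each intermediate image $\zeta_k(z)$ lands in a genuine disk $D_{\mathbf c}$, $\mathbf c\in\mathcal W$, so that \eqref{e:contracting} applies, that these disks avoid the poles of the relevant generators so that the chain rule and the M\"obius identity are legitimate, and that the per-letter logarithmic derivatives are uniformly Lipschitz on the fixed disks. Each of these is immediate from the Schottky conditions~\eqref{e:gamma-a-property} and the word condition $a_{j+1}\neq\overline{a_j}$ in~\eqref{eq:Wn}, and the geometric decay in~\eqref{e:contracting} is precisely what makes the telescoping sum converge uniformly in the length of $\mathbf a$.
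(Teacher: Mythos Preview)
Your argument is correct and is the classical bounded-distortion proof from conformal dynamics: control the oscillation of $\log|\gamma_{\mathbf a'}'|$ over $D_{a_n}$ by telescoping along the chain rule and summing the geometric series coming from~\eqref{e:contracting}, then pin the common size of $|\gamma_{\mathbf a'}'|$ to $|I_{\mathbf a}|$ using the M\"obius identity $|\varphi(z_1)-\varphi(z_2)|=|z_1-z_2|\,|\varphi'(z_1)|^{1/2}|\varphi'(z_2)|^{1/2}$. All the combinatorial checks you flag (intermediate images land in the right $D_{\mathbf c}$, poles are avoided, per-letter log-derivatives are uniformly Lipschitz on the fixed disks) are indeed immediate from the Schottky data.

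The paper, however, takes a different and more algebraic route. It writes $\gamma_{\mathbf a'}=\gamma_J\gamma_\alpha\gamma_I^{-1}$, where $\gamma_I,\gamma_J$ are the affine maps taking $[0,1]$ to $I_{a_n}$ and $I_{\mathbf a}$ respectively, and $\gamma_\alpha\in\SL(2,\mathbb R)$ depends on a single real parameter $\alpha$ which is bounded by $C_\Gamma$ via~\cite[Lemma~2.4]{hyperfup}. Then $|\gamma_{\mathbf a'}'(z)|=(|I_{\mathbf a}|/|I_{a_n}|)\,|\gamma_\alpha'(w)|$ with $w$ in the fixed disk $\{|w-\tfrac12|\leq\tfrac12\}$, and $|\gamma_\alpha'(w)|$ is estimated directly. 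This handles the whole word $\mathbf a$ in one stroke, with no iteration over its letters and no appeal to~\eqref{e:contracting}, at the cost of importing the bound on $\alpha$ from~\cite{hyperfup}. Your approach is self-contained and would transfer verbatim to any contracting conformal iterated function system; the paper's is shorter once the external lemma is in hand and leans on the specific $\SL(2,\mathbb R)$ structure.
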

\begin{proof}
Define the intervals $I:=I_{a_n}$, $J:=I_{\mathbf a}=\gamma_{\mathbf a'}(I)$.
Let $\gamma_I,\gamma_J\in \SL(2,\mathbb R)$ be the unique affine transformations
such that $\gamma_I([0,1])=I$, $\gamma_J([0,1])=J$.
Following~\cite[\S2.2]{hyperfup}, we write
$$
\gamma_{\mathbf a'}=\gamma_J\gamma_\alpha\gamma_I^{-1},\quad
\gamma_\alpha=\begin{pmatrix} e^{\alpha/2} & 0 \\
e^{\alpha/2} - e^{-\alpha / 2} & e^{-\alpha / 2}\end{pmatrix} \in \SL(2,\mathbb R),
$$
where $\alpha\in\mathbb R$ and $|\alpha|\leq C_\Gamma$ by~\cite[Lemma~2.4]{hyperfup}.
For $z\in D_{a_n}$ we have
$$
\gamma_{\mathbf a'}(z)={|J|\over |I|}\gamma_\alpha'(w),\qquad
w=\gamma_I^{-1}(z)\in \Big\{\Big|w-{1\over 2}\Big|\leq {1\over 2}\Big\}.
$$
We compute
$$
|\gamma_{\alpha}'(w)|={e^\alpha\over |(e^{\alpha}-1)w+1|^2}\in [C_\Gamma^{-1},C_\Gamma] .
$$
Since  
$ C_\Gamma^{-1}  | I_{\mathbf a} | \leq { |J|}/{|I|} \leq C_\Gamma
| I_{\mathbf a } | $, 
\eqref{e:gamma-der} follows.
\end{proof}
The next lemma bounds the number of intervals $I_{\mathbf a}$ of comparable sizes
which can contain a given point:
\begin{lemm}
  \label{l:multiplicity}
For all $C_1\geq 2$ and $\tau>0$, we have
\begin{equation}
  \label{e:multiplicity}
\sup_{x\in\mathbb R} \#\{\mathbf a\in \mathcal W^\circ\colon 
\tau\leq |I_{\mathbf a}|\leq C_1\tau,\ x\in I_{\mathbf a}\}
\leq C_\Gamma\log C_1.
\end{equation}
\end{lemm}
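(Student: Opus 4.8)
The statement to prove is Lemma~\ref{l:multiplicity}: for fixed $C_1\geq 2$ and $\tau>0$, the number of words $\mathbf a\in\mathcal W^\circ$ with $\tau\leq |I_{\mathbf a}|\leq C_1\tau$ and $x\in I_{\mathbf a}$ is at most $C_\Gamma\log C_1$, uniformly in $x$. The plan is to exploit the tree structure of $\mathcal W$ together with the nesting/disjointness dichotomy for the disks $D_{\mathbf a}$ and the uniform exponential contraction~\eqref{e:contracting}.

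First I would observe that if $\mathbf a$ and $\mathbf b$ are both in the set and $x\in I_{\mathbf a}\cap I_{\mathbf b}$, then $I_{\mathbf a}\cap I_{\mathbf b}\neq\emptyset$, so by the dichotomy recorded in \S\ref{s:schottky} (if $\mathbf a\not\prec\mathbf b$ and $\mathbf b\not\prec\mathbf a$ then $D_{\mathbf a}\cap D_{\mathbf b}=\emptyset$) one of $\mathbf a,\mathbf b$ must be a prefix of the other. Hence all words in the set occurring for a given $x$ are totally ordered by the prefix relation $\prec$; call them $\mathbf a^{(1)}\prec\mathbf a^{(2)}\prec\cdots\prec\mathbf a^{(m)}$. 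The next step is to bound how far apart consecutive lengths $|\mathbf a^{(j)}|$ can be: since each word in the chain has $|I|$ between $\tau$ and $C_1\tau$, and since passing from a prefix $\mathbf a$ to a longer word $\mathbf b$ with $\mathbf a\prec\mathbf b$, $\mathbf a\neq\mathbf b$, forces $I_{\mathbf b}\subset I'_{\mathbf a}\subset I_{\mathbf a}^\circ$, I would use~\eqref{e:contracting} in the form $|I_{\mathbf b}|\leq C_\Gamma(1-C_\Gamma^{-1})^{|\mathbf b|-|\mathbf a|}|I_{\mathbf a}|$ — more precisely, applying~\eqref{e:gamma-der} or~\eqref{e:contracting} to the word obtained by stripping the common prefix $\mathbf a$ — to conclude that $|\mathbf a^{(m)}|-|\mathbf a^{(1)}|\leq C_\Gamma\log C_1$: otherwise the ratio $|I_{\mathbf a^{(m)}}|/|I_{\mathbf a^{(1)}}|$ would be forced below $1/C_1$, contradicting that both lie in $[\tau,C_1\tau]$. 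Finally, since the words form a prefix chain with length increments at least $1$, the number of them is at most $|\mathbf a^{(m)}|-|\mathbf a^{(1)}|+1\leq C_\Gamma\log C_1$ (absorbing the $+1$ and the constant into $C_\Gamma$, using $C_1\geq 2$ so $\log C_1$ is bounded below).

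The one technical point to handle with care — and the step I expect to be the main obstacle — is making the contraction estimate between two words in the chain rather than between a word and its immediate parent. Concretely, given $\mathbf a\prec\mathbf b$ with $\mathbf b=\mathbf a\mathbf c$ and $\mathbf c\in\mathcal W^\circ$, I want $|I_{\mathbf b}|\leq C_\Gamma(1-C_\Gamma^{-1})^{|\mathbf c|}|I_{\mathbf a}|$. This should follow by combining Lemma~\ref{l:gamma-der} applied to $\mathbf a$ (so $|\gamma_{\mathbf a'}'|\sim|I_{\mathbf a}|$ on $D_{a_n}$, with $\mathbf a=a_1\cdots a_n$), the chain rule $\gamma_{\mathbf b'} = \gamma_{\mathbf a'}\cdot(\gamma_{a_n}\gamma_{c_1\cdots c_{k-1}})$ where $\mathbf c=c_1\cdots c_k$, and the contraction property~\eqref{e:contracting} applied to the word $a_n\mathbf c$ (valid since $a_n\to\mathbf c$, i.e. $c_1\neq\overline{a_n}$, which holds because $\mathbf a\mathbf c\in\mathcal W$); one multiplies the derivative bounds to get $|I_{\mathbf b}|=|\gamma_{\mathbf b'}(I_{b_n})|\lesssim |I_{\mathbf a}|\cdot|I_{a_n\mathbf c}|/|I_{a_n}|\lesssim |I_{\mathbf a}|(1-C_\Gamma^{-1})^{|\mathbf c|}$. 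Once this ratio bound is in hand, the counting argument above closes immediately, and taking logarithms converts the exponential decay into the claimed $C_\Gamma\log C_1$ bound on the chain length.
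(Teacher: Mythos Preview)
Your proposal is correct and follows the same overall strategy as the paper: both arguments first observe that all words in the set containing a fixed $x$ are totally ordered by $\prec$, and then use the uniform contraction of interval lengths along the resulting chain to bound its length by $C_\Gamma\log C_1$.

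The one difference worth noting is that the paper's execution is more direct and sidesteps precisely the ``technical point'' you flag. Rather than bounding the word-length gap $|\mathbf a^{(m)}|-|\mathbf a^{(1)}|$ via a relative contraction estimate $|I_{\mathbf b}|\leq C_\Gamma(1-C_\Gamma^{-1})^{|\mathbf b|-|\mathbf a|}|I_{\mathbf a}|$ (which you correctly derive from Lemma~\ref{l:gamma-der} and the chain rule), the paper simply uses the one-step contraction $|I_{\mathbf a_{j+1}}|\leq(1-C_\Gamma^{-1})|I_{\mathbf a_j}|$ between \emph{consecutive chain elements}: since $\mathbf a_j\prec\mathbf a_{j+1}$ properly, the immediate-parent contraction from~\cite[\S2.1]{hyperfup} applied once (or iterated along the path from $\mathbf a_j$ to $\mathbf a_{j+1}$) already gives this. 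Iterating over the $N-1$ steps in the chain yields $(1-C_\Gamma^{-1})^{N-1}\geq |I_{\mathbf a_N}|/|I_{\mathbf a_1}|\geq 1/C_1$, hence $N\leq C_\Gamma\log C_1$. Your route through word-length differences works fine, but the paper's argument needs neither Lemma~\ref{l:gamma-der} nor any chain-rule computation.
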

\begin{proof}
Fix $x\in \mathbb R$. We have
$$
\{\mathbf a\in \mathcal W^\circ\colon 
\tau\leq |I_{\mathbf a}|\leq C_1\tau,\ x\in I_{\mathbf a}\}
=\{\mathbf a_1,\dots,\mathbf a_N\}
$$
for some $\mathbf a_1,\dots,\mathbf a_N\in\mathcal W^\circ$
such that $\mathbf a_1\prec\mathbf a_2\prec\dots\prec\mathbf a_N$.
We have $|I_{\mathbf a_{j+1}}|\leq (1-C_{\Gamma}^{-1})|I_{\mathbf a_j}|$,
see~\cite[\S2.1]{hyperfup}, and~\eqref{e:multiplicity} follows.
\end{proof}

\subsection{Functional spaces}
\label{s:funsp}

For any open $\Omega\subset\mathbb C$, let $\mathcal H(\Omega)$ be the \emph{Bergman space} on~$\Omega$,
consisting of holomorphic functions $f:\Omega\to\mathbb C$ such that
$f\in L^2(\Omega)$ (with respect to the Lebesgue measure).
Endowing $\mathcal H(\Omega)$ with the $L^2$ norm, we obtain a separable Hilbert space.
For general $\Omega\subset\mathbb C$, we put $\mathcal H(\Omega):=\mathcal H(\Omega^\circ)$.

Denote
\begin{equation}
  \label{e:D-2}
\mathbf D_2:=\bigsqcup_{\mathbf a\in \mathcal W\atop |\mathbf a|=2} D_{\mathbf a}\subset \mathbf D^\circ
\end{equation}
and let $\mathbf D^\pm:=\mathbf D\cap \{\pm\Im z\geq 0\}$,
$\mathbf D^\pm_2:=\mathbf D_2\cap \{\pm \Im z\geq 0\}$. See Figure~\ref{f:D-2}.

The following basic estimate is needed for the a priori bounds in~\S\ref{s:a-priori} below:
\begin{lemm}
  \label{l:interpolated-bound}
There exists $c\in (0,1]$ such that
for all $ f \in \mathcal H ( \mathbf D^\pm ) \cap C ( \mathbf D^{\pm } ) $,
\begin{equation}
  \label{e:ib}
\sup_{\mathbf D_2^\pm}|f|\leq \big(\sup_{\mathbf I}|f|\big)^{c}
\big(\sup_{\mathbf D^\pm}|f|\big)^{1-c}.
\end{equation}
\end{lemm}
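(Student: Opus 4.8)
The inequality \eqref{e:ib} is a two-constants-type theorem (Hadamard three-lines/circles in a more general geometric setting), and the natural tool is the harmonic majorant / subharmonicity of $\log|f|$. The plan is as follows. First I would reduce to the connected components of $\mathbf D^\pm$: it suffices to prove the bound on each disk $D_a^\pm := D_a \cap \{\pm \Im z \geq 0\}$ separately, since $\mathbf D_2^\pm \subset \bigsqcup_a D_a^\pm$ and the right-hand side of \eqref{e:ib} only gets larger if we replace $\sup_{\mathbf I}$ and $\sup_{\mathbf D^\pm}$ by suprema over the whole (disconnected) sets. So fix $a$, write $D := D_a^\pm$, a half-disk with diameter $I := I_a$ along $\mathbb R$, and let $K := D_{aa_2}^\pm$ for the relevant $a_2$ with $a\to a_2$ (more precisely the union over such $a_2$, but again we may treat one at a time); then $K$ is a compact subset of $D$ lying a definite distance from the circular part $\partial D \setminus I$, because $I'_a \Subset I_a^\circ$ and the disks $D_{aa_2}$ sit over $I'_a$.

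Second, I would invoke subharmonicity of $u := \log|f|$ on $D^\circ$ (with $u \equiv -\infty$ allowed if $f\equiv 0$, in which case the statement is trivial). Since $f \in C(\mathbf D^\pm)$, $u$ is bounded above and upper semicontinuous on the closed half-disk $D$, so it is dominated by the harmonic function $v$ on $D^\circ$ with boundary values $u|_{\partial D}$ (solve the Dirichlet problem; $u \leq v$ on $D^\circ$ by the maximum principle for subharmonic functions). Split the boundary $\partial D = I \cup \Gamma_a$ where $\Gamma_a$ is the circular arc. Writing $\omega_D(z, \cdot)$ for harmonic measure on $\partial D$ seen from $z\in D^\circ$, we get for all $z \in K$
\[
\log|f(z)| \leq v(z) \leq \omega_D(z,I)\,\log\sup_{I}|f| + \omega_D(z,\Gamma_a)\,\log\sup_{D}|f|,
\]
using $\log\sup_I|f| \le \log\sup_{\mathbf I}|f|$ and (if that is negative) being slightly careful — but in fact one may first normalize $\sup_{\mathbf D^\pm}|f| = 1$, so both $\log\sup_I|f|$ and $\log\sup_D|f|$ are $\le 0$ and the harmonic-measure convex combination behaves correctly, yielding $\log|f(z)| \le \omega_D(z,I)\log\sup_I|f|$, hence $|f(z)| \le (\sup_I|f|)^{\omega_D(z,I)}$.

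Third, the key quantitative point: $\inf_{z \in K} \omega_D(z, I) =: c_a > 0$. This holds because $K$ is compact in $D^\circ$ and $\omega_D(\cdot, I)$ is a positive harmonic function on $D^\circ$ that is not identically zero (as $I$ has positive harmonic measure — $D$ is a half-disk with $I$ its diameter, where one can even compute $\omega_D(\cdot,I)$ explicitly via the Riemann map to the upper half-plane, getting a clean formula), so by the minimum principle it is bounded below on the compact set $K$. Taking $c := \min_a c_a \in (0,1]$ (shrinking to be $\le 1$; note $\omega_D(z,I) \le 1$ automatically, and one can check $c_a$ can be taken $\le 1$) and combining over the finitely many pairs $(a, a_2)$ gives \eqref{e:ib} after undoing the normalization: $\sup_{\mathbf D_2^\pm}|f| \le (\sup_{\mathbf I}|f|)^c (\sup_{\mathbf D^\pm}|f|)^{1-c}$.

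\textbf{Main obstacle.} The only real subtlety is the positivity-with-a-uniform-constant of the harmonic measure of $I$ as seen from $K$, i.e. that $c > 0$ can be chosen independently of $f$; this is where the geometric input $I'_a \Subset I_a^\circ$ (so that $D_{\mathbf a}$ for $|\mathbf a| = 2$ stays away from the circular boundary of $D_a$) is essential, and it is exactly what makes the constant depend only on the Schottky data. Handling the degenerate case $f \equiv 0$ and the bookkeeping of the normalization (so that the concavity of $t \mapsto $ (the harmonic combination) in $\log$ is applied with the right signs) are routine. The identification of $\mathbf D^\pm$, $\mathbf D_2^\pm$ as finite disjoint unions makes passing between components harmless.
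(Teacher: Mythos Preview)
Your approach is correct and essentially the same as the paper's: bound $\log|f|$ above by the harmonic function on $(\mathbf D^\pm)^\circ$ with boundary values $1$ on $\mathbf I$ and $0$ on the half-circles (i.e.\ the harmonic measure of $\mathbf I$), and use that its infimum over $\mathbf D_2^\pm$ is a positive constant $c$ because $\mathbf D_2^\pm$ stays a fixed distance from the circular boundary. The only slip is the line ``$K$ is compact in $D^\circ$'': in fact $K=D_{aa_2}^\pm$ meets $I$, but your earlier (correct) observation that $K$ stays away from the circular arc and the corners is what is actually needed, since $\omega_D(\cdot,I)$ extends continuously to $1$ on the open segment $I^\circ$.
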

\begin{figure}
\includegraphics{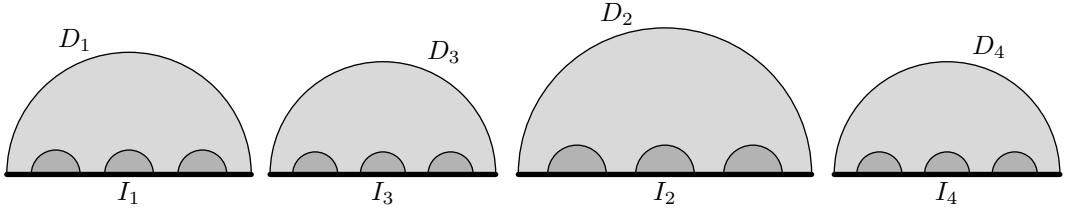}
\caption{An illustration of Lemma~\ref{l:interpolated-bound}. The lighter
shaded region is~$\mathbf D^+$ and the darker region is $\mathbf D_2^+$.
The union of the thick lines is~$\mathbf I$.}
\label{f:D-2}
\end{figure}
\begin{proof}
The boundary of $\mathbf D^\pm$ consists of $\mathbf I$ and a union of half-circles,
which we denote $\mathbf S^\pm$.
Let $F_\pm:(\mathbf D^\pm)^\circ\to [0,1]$ be the harmonic
function with boundary values
$$
F_\pm|_{\mathbf I}\equiv 1,\quad
F_\pm|_{\mathbf S^\pm}\equiv 0.
$$
Since $ F_\pm $ is positive in $(\mathbf D^\pm)^\circ $ and
$\mathbf D_2^\pm$ lies away from $\mathbf S^\pm$,
the infimum of $F_\pm$ on $ \mathbf D_2^\pm $ is positive.  Denote this infimum by 
$
c:=\inf_{\mathbf D_2^\pm}F^\pm\in (0,1]
$.
Since $\log|f|$ is subharmonic in $\mathbf D^\pm$ and
$$
\log|f|\leq \big(\sup_{\mathbf I}\log|f|\big)F_\pm+\big(\sup_{\mathbf D^\pm}\log|f|\big)(1-F_\pm)\quad\text{on }\partial\mathbf D^\pm,
$$
the maximum principle implies that for $ z \in \mathbf D^\pm_2 $, 
\[
\begin{split} 
\log |f ( z)|  & \leq 
\sup_{\mathbf D^\pm}\log|f| - 
F_\pm ( z ) \big( \sup_{\mathbf D^\pm}\log|f| - 
\sup_{\mathbf I}\log|f|  \big) \\
& \leq \sup_{\mathbf D^\pm}\log|f|  - c 
\big( \sup_{\mathbf D^\pm}\log|f| - 
\sup_{\mathbf I}\log|f|  \big) .
\end{split} 
\]
Exponentiating this we obtain~\eqref{e:ib}.
\end{proof}

\subsection{Transfer operators and resonances}
  \label{s:transfer}
The zeros of the Selberg zeta function, that is the 
resonances of $ M = \Gamma \backslash \mathbb H^2 $, are characterized
using a dynamical transfer operator, also called the {\em Ruelle operator}.
Here we follow \cite[\S 15.3]{BorthwickBook},\cite{GLZ} and consider these operators on Bergman 
spaces defined in \S \ref{s:funsp}. We refer to \cite{Ballade} for 
other approaches to transfer operators and for historical background.

Here, for $s\in\mathbb C$ we define the \emph{transfer operator} $\mathcal L_s:\mathcal H(\mathbf D)\to\mathcal H(\mathbf D)$ 
as follows:
\begin{equation}
  \label{e:transfer}
\mathcal L_s f(z)=\sum_{a\in\mathcal A\atop a\to b} \gamma_a'(z)^s f(\gamma_a(z)),\quad
z\in D_b,\
b\in\mathcal A.
\end{equation}
(We note that in the notation of \cite[(15.11)]{BorthwickBook}, $ S_i =
\gamma_i^{-1} $.) This is the same as \eqref{eq:LsBS} if we define 
$ B z = \gamma_a^{-1} ( z ) $ for $ z \in D_a $.

The derivative satisfies $\gamma_a'(z)>0$ for $z\in I_b$ and  $\gamma_a'(z)^s$
is uniquely defined and holomorphic for $z\in D_b$ and $s\in\mathbb C$ such that
$\gamma_a'(z)^s>0$ when $z\in I_b$, $s\in\mathbb R$. Since
$\gamma_a$ takes real values on $\mathbb R$, the expression~\eqref{e:transfer} additionally
gives an operator on $L^2(\mathbf I)$, also denoted $\mathcal L_s$.
The operators $\mathcal L_s$ are of trace class 
(see \cite[Lemma 15.7]{BorthwickBook}) and depend holomorphically on $s\in\mathbb C$.
Hence the determinant
$
\det(I-\mathcal L_s)
$
defines an entire function of $s\in\mathbb C$.
The connection to the Selberg zeta function defined in \eqref{eq:Selb} 
is a special case of Ruelle theory and 
and can be found in \cite[Theorem 15.10]{BorthwickBook}:
\[ Z_M ( s) = \det ( I - \mathcal L_s ) . \]
It also follows that $I-\mathcal L_s$ is a Fredholm operator of index 0 and 
its invertibility is equivalent to $ \ker ( I - \mathcal L_s ) = \{ 0 \}$.
Hence,
\begin{equation}
\label{eq:delLs}  Z_M ( s ) = \det ( I - \mathcal L_s ) = 0 
\ \Longleftrightarrow \ \exists \,  u \in \mathcal H ( \mathbf D ) , \ 
u \not \equiv 0 , \ u = \mathcal L_s u ,
\end{equation}
see \cite[Theorem A.34]{BorthwickBook}.

\subsection{Partitions and refined transfer operators}
\label{s:parif}

Our proof uses refined transfer operators which are generalizations of powers of the
standard transfer operator $\mathcal L_s$ given by~\eqref{e:transfer}. To introduce these
we use the notion of a partition:
\begin{itemize}
\item A finite set $Z\subset\mathcal W^\circ$ is called a \emph{partition}
if there exists $N$ such that for each $\mathbf a\in\mathcal W$
with $|\mathbf a|\geq N$, there exists unique $\mathbf b\in Z$ such that
$\mathbf b\prec\mathbf a$. See Figure~\ref{f:partition}. In terms of the limit set, this means that
\begin{equation}
\label{eq:nelim}
\Lambda_\Gamma=\bigsqcup_{\mathbf b\in Z} (I_{\mathbf b}\cap\Lambda_\Gamma).
\end{equation}
\item The alphabet $\mathcal A$ is a partition, as is the set $\mathcal W_n$
of words of length $n\geq 1$. Another important example
is the set of words discretizing to some resolution
$\tau>0$:
\begin{equation}
  \label{e:Z-tau}
Z(\tau):=\big\{\mathbf a\in\mathcal W^\circ\colon |I_{\mathbf a}|\leq\tau<|I_{\mathbf a'}|\big\}
\end{equation}
where we put $|I_{\emptyset}|:=\infty$. The set $Z(\tau)$ is a partition due to~\eqref{e:contracting}.
\item Note however that if $Z$ is a partition, this does not imply
that the set $\overline Z$ defined in~\eqref{e:Z-bar} is a partition.
\end{itemize}
If $Z\subset\mathcal W^\circ$ is a finite set, we define the refined transfer operator $\mathcal L_{Z,s}:\mathcal H(\mathbf D)\to\mathcal H(\mathbf D)$
as follows:
\begin{equation}
  \label{e:refined-transfer}
\mathcal L_{Z,s}f(z)=\sum_{\mathbf a\in Z\atop \mathbf a\rsa b}\gamma_{\mathbf a'}'(z)^s f(\gamma_{\mathbf a'}(z)),\quad
z\in D_b,\
b\in\mathcal A.
\end{equation}
As in the case of $\mathcal L_{s}$, we can also consider $\mathcal L_{Z,s}$ as an operator on $L^2(\mathbf I)$.

Here are some basic examples of refined transfer operators:
\begin{itemize}
\item If $Z=\mathcal A$ then $\mathcal L_{Z,s}$ is the identity operator.
\item If $Z=\mathcal W_2$, the set of words of length 2, then
$\mathcal L_{Z,s}=\mathcal L_s$, the standard transfer operator defined in~\eqref{e:transfer}.
\item More generally if $Z=\mathcal W_N$ for some $N\geq 1$, then
$\mathcal L_{Z,s}=\mathcal L_s^{N-1}$.
\end{itemize}
%
\begin{lemm}
  \label{l:transfer-power}
Assume that $Z$ is a partition; define $\overline Z$ by~\eqref{e:Z-bar}.
Then for all $u\in \mathcal H(\mathbf D)$ and $s\in\mathbb C$
\begin{equation}
  \label{e:transfer-power}
\mathcal L_s u=u\quad\Longrightarrow\quad \mathcal L_{\overline Z,s}u=u.
\end{equation}
\end{lemm}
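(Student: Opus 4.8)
The plan is to show that $\mathcal L_{\overline Z,s}$ is, in a suitable sense, a conjugate or reindexed version of a power of $\mathcal L_s$, so that the fixed-point equation $\mathcal L_s u = u$ is inherited. First I would unwind the definition~\eqref{e:refined-transfer}: since $Z$ is a partition, for each $b\in\mathcal A$ the words $\mathbf a\in Z$ with $\mathbf a\rsa b$ have the form $\mathbf a = \mathbf c b$ (more precisely $\mathbf a' \to b$ and $a_{|\mathbf a|} = b$), and the corresponding maps $\gamma_{\mathbf a'}$ are exactly the compositions $\gamma_{a_1}\cdots\gamma_{a_{n-1}}$ appearing when one iterates $\mathcal L_s$ along the branches recorded by $Z$. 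The key bookkeeping identity is the cocycle/chain rule $\gamma_{\mathbf{cd}}'(z)^s = \gamma_{\mathbf c}'(\gamma_{\mathbf d}(z))^s\,\gamma_{\mathbf d}'(z)^s$ valid when $\mathbf c\to\mathbf d$ (which requires checking the chosen branch of the $s$-power is consistent — this follows because all the maps are real and increasing on $\mathbf I$, so the powers are positive there and then extended holomorphically, as noted after~\eqref{e:transfer}).

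The cleanest route is an induction on the "depth" of the partition. If $Z=\mathcal W_n$ then $\overline Z = \mathcal W_n$ as well (reversing and barring a length-$n$ admissible word gives a length-$n$ admissible word), and then $\mathcal L_{\overline Z,s} = \mathcal L_s^{n-1}$ by the third basic example, so $\mathcal L_s u = u$ immediately gives $\mathcal L_{\overline Z,s}u = u$. For a general partition $Z$, I would compare it to $\mathcal W_n$ for $n$ large enough (larger than the $N$ in the definition of partition): every $\mathbf a\in\mathcal W_n$ has a unique prefix $\mathbf b\in Z$, which organizes $\mathcal W_n$ into groups indexed by $Z$, and correspondingly $\overline{\mathcal W_n} = \mathcal W_n$ decomposes via suffixes indexed by $\overline Z$. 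The plan is to show that summing the $\mathcal W_n$-transfer operator over each group collapses to the $\overline Z$-transfer operator \emph{after} using $\mathcal L_s u = u$ to absorb the "tails" $\gamma_{\mathbf c}$ with $\mathbf b\prec \mathbf b\mathbf c$, $\mathbf c$ ranging over a partition of the subtree below $\mathbf b$. Concretely: for fixed $\overline{\mathbf b}\in\overline Z$, the terms in $\mathcal L_{\mathcal W_n,s}u = u$ whose reversed word extends $\overline{\mathbf b}$ sum (by the cocycle identity and holomorphy) to $\gamma_{(\overline{\mathbf b})'}'(z)^s (\mathcal L_s^{m}u)(\gamma_{(\overline{\mathbf b})'}(z))$ for appropriate $m$, which equals $\gamma_{(\overline{\mathbf b})'}'(z)^s\, u(\gamma_{(\overline{\mathbf b})'}(z))$ since $\mathcal L_s u = u$; assembling over $\overline{\mathbf b}\in\overline Z$ reconstitutes $\mathcal L_{\overline Z,s}u$.

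The main obstacle I anticipate is the asymmetry flagged in the excerpt: $\overline Z$ need not be a partition, so I cannot simply quote a symmetric statement about refined transfer operators, and I must be careful that the grouping of $\mathcal W_n$ by suffixes in $\overline Z$ genuinely partitions $\mathcal W_n$ (it should, because reversing-and-barring is an involution on $\mathcal W_n$ carrying the prefix-partition by $Z$ to the suffix-partition by $\overline Z$, but the admissibility conditions $a_{j+1}\neq\overline{a_j}$ interact with concatenation $\mathbf b\prec\mathbf a$ versus $\mathbf a = \mathbf b\mathbf c$ with $\mathbf b\to\mathbf c$ and this needs a clean check). A secondary technical point is tracking the branch of $\gamma_{\mathbf a'}'(z)^s$ through all the compositions; I would handle this once and for all by noting positivity on $\mathbf I$ plus holomorphy and uniqueness of analytic continuation, so that the cocycle identity holds as an identity of holomorphic functions on $\mathbf D$, not merely on $\mathbf I$. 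Everything else is routine reindexing, so I expect the proof to be short: reduce to $Z=\mathcal W_n$ via the partition property, then use the involution $\mathbf a\mapsto\overline{\mathbf a}$ together with $\mathcal L_s u = u$ to push the identity from $\mathcal W_n$ down to $\overline Z$.
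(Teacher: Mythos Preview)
Your approach is correct and takes a genuinely different route from the paper's. You argue ``from above'': starting from the known identity $\mathcal L_{\mathcal W_n,s}u=\mathcal L_s^{\,n-1}u=u$ for $n$ large, you group the words $\mathbf a\in\mathcal W_n$ with $\mathbf a\rsa b$ according to their unique suffix $\mathbf c\in\overline Z$ (equivalently, the unique prefix $\overline{\mathbf c}\in Z$ of $\overline{\mathbf a}$), and then the chain rule together with $\mathcal L_s^{\,n-|\mathbf c|}u=u$ collapses each group to the single term $\gamma_{\mathbf c'}'(z)^s\,u(\gamma_{\mathbf c'}(z))$. The paper instead argues ``from below'': it inducts on $\sum_{\mathbf b\in Z}|\mathbf b|$, starting from $Z=\mathcal A$ (where $\mathcal L_{\overline Z,s}$ is the identity) and at each step passing from a partition $Z'$ to the one-step refinement $Z=(Z'\setminus\{\mathbf d\})\sqcup\{\mathbf d a:\mathbf d\to a\}$ obtained by splitting a longest word; the difference $\mathcal L_{\overline Z,s}u-\mathcal L_{\overline{Z'},s}u$ then reduces to a single instance of $\mathcal L_s u=u$ evaluated at $\gamma_{\overline{\mathbf d}'}(z)$. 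Your version avoids the induction and makes the global combinatorics explicit, at the price of the bijection check you flagged (which does go through: for fixed $\mathbf c\in\overline Z$ with $\mathbf c\rsa b$, the prefixes $\mathbf f\in\mathcal W_{n-|\mathbf c|}$ with $\mathbf f\to\mathbf c$ are exactly the $\mathbf g'$ appearing in $\mathcal L_s^{\,n-|\mathbf c|}u(w)$ at $w=\gamma_{\mathbf c'}(z)\in D_{c_1}$). The paper's version localizes to one leaf at a time, so each inductive step is a one-line identity. Both proofs are short; yours is arguably more conceptual, the paper's slightly more elementary to verify line by line.
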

\begin{proof}
We argue by induction on $\sum_{\mathbf b\in Z}|\mathbf b|$. If $Z=\mathcal A$ then
$\mathcal L_{\overline Z,s}$ is the identity operator so~\eqref{e:transfer-power} holds.
Assume that $Z\neq\mathcal A$. Choose a longest word $\mathbf dc\in Z$, where $\mathbf d\in \mathcal W^\circ$
and $c\in \mathcal A$. Then $Z$ has the form (see Figure~\ref{f:partition})
\begin{figure}
\includegraphics{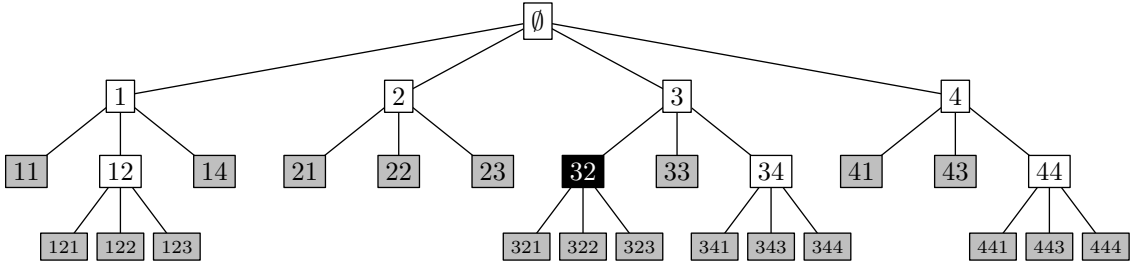}
\caption{An example of a partition $Z$, with elements of $Z$ shaded grey in the tree of words.
The solid black word is one possible choice of $\mathbf d$
in the proof of Lemma~\ref{l:transfer-power}.}
\label{f:partition}
\end{figure}
$$
Z=\big(Z'\setminus \{\mathbf d\}\big)\sqcup \{\mathbf da\mid a\in\mathcal A,\ \mathbf d\to a\}
$$
where $Z'$ is a partition containing $\mathbf d$. By the inductive hypothesis
we have $\mathcal L_{\overline{Z'},s}u=u$, thus is remains to prove
that
\begin{equation}
  \label{e:tp-1}
\mathcal L_s u=u\quad\Longrightarrow\quad \mathcal L_{\overline{Z'},s}u=\mathcal L_{\overline Z,s}u.
\end{equation}
We write
for each $z\in D_b$, $b\in\mathcal A$
$$
\mathcal L_{\overline{Z},s}u(z)=\sum_{\mathbf a\in Z\atop \overline{\mathbf a}\rsa b}
\gamma_{\overline{\mathbf a}'}'(z)^su(\gamma_{\overline{\mathbf a}'}(z))
$$
and similarly for $Z'$. The condition $\overline{\mathbf a}\rsa b$
simply means $b=\overline{a_1}$ where $a_1$ is the first letter of $\mathbf a$.

For $b\neq \overline{d_1}$ the expressions for
$\mathcal L_{\overline{Z},s}u(z)$ and $\mathcal L_{\overline{Z'},s}u(z)$ are identical.
Assume now that $b=\overline{d_1}$. Removing identical terms from
the conclusion of~\eqref{e:tp-1} and using that $\overline{\mathbf da}'=\overline a\overline{\mathbf d}'$ we
reduce~\eqref{e:tp-1} to
$$
\gamma'_{\overline{\mathbf d}'}(z)^su(\gamma_{\overline{\mathbf d}'}(z))
=\sum_{a\in\mathcal A\atop \mathbf d\to a}\gamma'_{\overline a\overline{\mathbf d}'}(z)^s
u(\gamma_{\overline a\overline{\mathbf d}'}(z))\quad\text{for all }z\in D_{\overline{d_1}}.
$$
Using the chain rule and dividing by $\gamma'_{\overline{\mathbf d}'}(z)^s$ this reduces to
$$
u(\gamma_{\overline{\mathbf d}'}(z))
=\sum_{a\in\mathcal A\atop \overline a\to\overline{\mathbf d}}\gamma'_{\overline a}(\gamma_{\overline{\mathbf d}'}(z))^s
u(\gamma_{\overline a}(\gamma_{\overline{\mathbf d}'}(z)))\quad\text{for all }z\in D_{\overline{d_1}}.
$$
The latter follows from the equality $\mathcal L_su(w)=u(w)$
where $w:=\gamma_{\overline{\mathbf d}'}(z)\in D_{\overline{\mathbf d}}$.
\end{proof}

\subsection{An integral operator}
  \label{s:integral-operator}

We now introduce an integral operator $\mathcal B(s)$ similar to $\mathcal B_\chi(h)$ from~\eqref{e:B-chi}. That operator has a simpler definition than 
$ \mathcal B_\chi ( h ) $ but one pays by introducing singularities. 

In our approach we use the upper half plane model while $\mathcal B_\chi(h)$ acts on functions on a circle rather than a line.
Hence, we will use the extended real line $\dot{\mathbb R}=\mathbb R\cup\{\infty\}$
which is identified with the circle $\mathbb S^1\subset\mathbb C$ by the map
\begin{equation}
  \label{e:line-circle}
x\in\dot{\mathbb R}\ \mapsto\ 
y={i-x\over i+x}\in\mathbb S^1.
\end{equation}
The standard volume form on $\mathbb S^1$, pulled back by~\eqref{e:line-circle}, is
$$
dP(x)=2 \langle x\rangle^{-2}\,dx.
$$
Denote $L^2(\dot{\mathbb R}):=L^2(\dot{\mathbb R},dP)\simeq L^2(\mathbb S^1)$.
For $x,x'\in\dot{\mathbb R}$, let $|x-x'|_{\mathbb S}$ be the Euclidean distance
between $y(x)$ and $y(x')$, namely
$$
|x-x'|_{\mathbb S}={2|x-x'|\over \langle x\rangle\langle x'\rangle}.
$$
With this notation in place we define the  operator $\mathcal B(s)$, depending on $s\in\mathbb C$:
for $\Re s<{1\over 2}$, it is a bounded operator on $L^2(\dot{\mathbb R})$ given by the formula
\begin{equation}
  \label{e:B-s}
\mathcal B(s)f(x)=\Big|{\Im s\over 2\pi }\Big|^{1/2}\int_{\dot{\mathbb R}} |x-x'|_{\mathbb S}^{-2s}f(x')\,dP(x').
\end{equation}
For general $s$ the integral in~\eqref{e:B-s} may diverge however 
\begin{equation}
\label{eq:Bschi} \chi_1 \mathcal B(s)\chi_2:L^2(\dot{\mathbb R})\to L^2(\dot{\mathbb R}), 
\ \ \ 
\chi_1,\chi_2\in C^\infty(\dot{\mathbb R}), \ \  \supp\chi_1\cap\supp\chi_2=\emptyset, \end{equation}
is well defined. In other words, 
 $\mathcal B(s)f$ can be defined outside of $\supp f$.

The following equivariance property of $ \mathcal B ( s ) $ will be used in the proof of 
the main theorem in~\S\ref{s:proof-end}:
\begin{lemm}
  \label{l:B-equiv}
Let $\gamma\in \SL(2,\mathbb R)$, $s\in\mathbb C$, and consider the operator
\begin{equation}
  \label{e:T-gamma}
T_{\gamma,s}:L^2(\dot{\mathbb R})\to L^2(\dot{\mathbb R}),\quad
T_{\gamma,s}f(x) :=|\gamma'(x)|^s_{\mathbb S}f(\gamma(x)).
\end{equation}
Here $|\gamma'(x)|_{\mathbb S}=\langle x\rangle^2\langle\gamma(x)\rangle^{-2}\gamma'(x)$
is the derivative of the action of $\gamma$ on the circle defined using~\eqref{e:line-circle}.
Then 
\begin{equation}
  \label{e:B-equivariance}
T_{\gamma,s}\mathcal B(s)=\mathcal B(s)T_{\gamma,1-s}.
\end{equation}
\end{lemm}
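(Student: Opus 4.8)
The identity~\eqref{e:B-equivariance} is equivalent, after testing against $f$ and unwinding definitions, to a pointwise statement about the kernel of $\mathcal B(s)$ under the M\"obius change of variables $x'\mapsto\gamma(x')$. The plan is to compute both sides of~\eqref{e:B-equivariance} applied to a function $f$ and reduce the claim to the single algebraic identity~\eqref{eq:rho} for linear fractional transformations, suitably transplanted to the circle.

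First I would write out the left-hand side: $T_{\gamma,s}\mathcal B(s)f(x) = |\gamma'(x)|_{\mathbb S}^s\,\mathcal B(s)f(\gamma(x)) = |\gamma'(x)|_{\mathbb S}^s\,\big|\tfrac{\Im s}{2\pi}\big|^{1/2}\int_{\dot{\mathbb R}}|\gamma(x)-x'|_{\mathbb S}^{-2s}f(x')\,dP(x')$. Then in the integral I substitute $x' = \gamma(y')$; since $\gamma$ is a M\"obius transformation preserving $\dot{\mathbb R}$, it is a diffeomorphism of $\mathbb S^1$ and $dP(\gamma(y')) = |\gamma'(y')|_{\mathbb S}\,dP(y')$, where $|\gamma'|_{\mathbb S}>0$ because $\gamma\in\SL(2,\mathbb R)$. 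This turns the LHS into $\big|\tfrac{\Im s}{2\pi}\big|^{1/2}\int_{\dot{\mathbb R}} |\gamma'(x)|_{\mathbb S}^s\,|\gamma(x)-\gamma(y')|_{\mathbb S}^{-2s}\,|\gamma'(y')|_{\mathbb S}\,f(\gamma(y'))\,dP(y')$. On the other hand, $\mathcal B(s)T_{\gamma,1-s}f(x) = \big|\tfrac{\Im s}{2\pi}\big|^{1/2}\int_{\dot{\mathbb R}} |x-y'|_{\mathbb S}^{-2s}\,|\gamma'(y')|_{\mathbb S}^{1-s}\,f(\gamma(y'))\,dP(y')$. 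Comparing the two integrands (both integrated against the same measure), it remains to verify
\begin{equation}
  \label{e:equiv-pointwise}
|\gamma'(x)|_{\mathbb S}^s\,|\gamma(x)-\gamma(y')|_{\mathbb S}^{-2s}\,|\gamma'(y')|_{\mathbb S}
= |x-y'|_{\mathbb S}^{-2s}\,|\gamma'(y')|_{\mathbb S}^{1-s},
\end{equation}
i.e., after dividing by $|\gamma'(y')|_{\mathbb S}$ and raising to the power $1/s$ (for $s\neq 0$; the case $s=0$ is trivial since then $\mathcal B(s)$ has the scalar factor vanishing, or one argues by holomorphy in $s$),
$$
|\gamma'(x)|_{\mathbb S}\,|\gamma(x)-\gamma(y')|_{\mathbb S}^{-2} = |x-y'|_{\mathbb S}^{-2}\,|\gamma'(y')|_{\mathbb S}^{-1}.
$$
This is exactly~\eqref{eq:rho}, but with distances and derivatives taken on $\mathbb S^1$ rather than on $\mathbb R$. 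To pass between the two, I would use the conformal factor relating the two metrics: $|x-x'|_{\mathbb S} = 2|x-x'|\,\langle x\rangle^{-1}\langle x'\rangle^{-1}$ and $|\gamma'(x)|_{\mathbb S} = \langle x\rangle^2\langle\gamma(x)\rangle^{-2}\gamma'(x)$. Substituting these, all $\langle\cdot\rangle$ factors cancel and the $\mathbb S^1$-identity is reduced to the flat identity~\eqref{eq:rho}, which in turn is a direct consequence of the mean value / cross-ratio computation $\gamma(x)-\gamma(y') = \dfrac{(x-y')}{(cx+d)(cy'+d)}$ and $\gamma'(z) = (cz+d)^{-2}$ for $\gamma = \begin{pmatrix} a & b \\ c & d\end{pmatrix}$.

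The only genuine subtlety — and the step I expect to require the most care — is justifying the manipulations when the defining integral for $\mathcal B(s)$ does not converge, i.e. for $\Re s\geq\tfrac12$: there one must interpret the identity in the sense of~\eqref{eq:Bschi}, testing against $\chi_1,\chi_2\in C^\infty(\dot{\mathbb R})$ with disjoint supports (noting $\gamma$ maps the second support to another set disjoint from the appropriate first support since we can absorb $\gamma$ into the cutoffs), and then invoke analytic continuation in $s$ from the region $\Re s<\tfrac12$ where everything converges absolutely. Apart from that, the computation is the routine change of variables above together with~\eqref{eq:rho}, and I would present it in that order: state the reduction to~\eqref{e:equiv-pointwise}, verify~\eqref{e:equiv-pointwise} via the conformal factors and~\eqref{eq:rho}, and finally remark on the meaning of the identity for general $s$.
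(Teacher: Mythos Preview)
Your proposal is correct and follows essentially the same route as the paper: write out both sides, make the change of variables $x'=\gamma(y')$ in the integral, and reduce to the pointwise M\"obius identity $|\gamma(x)-\gamma(y')|_{\mathbb S}^2=|x-y'|_{\mathbb S}^2\,|\gamma'(x)|_{\mathbb S}\,|\gamma'(y')|_{\mathbb S}$, which is the circle version of~\eqref{eq:rho}. Two small remarks on presentation: (i) rather than ``raising to the power $1/s$'' it is cleaner to note that after dividing, your identity reads $A^s=B^s$ with $A,B>0$, so it suffices to verify $A=B$; (ii) for $\Re s\geq\tfrac12$ the paper simply interprets the identity pointwise for $x\notin\gamma^{-1}(\supp f)$, which is more direct than the analytic-continuation argument you sketch, though both are fine.
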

\begin{proof}
Take $f\in L^2(\dot{\mathbb R})$. We need to show that
$$
T_{\gamma,s}\mathcal B(s)f(x)=\mathcal B(s)T_{\gamma,1-s}f(x)
$$
for all $x$ when $\Re s<{1\over 2}$ and for all $x\notin \gamma^{-1}(\supp f)$ otherwise.
This is equivalent to
$$
\int_{\dot{\mathbb R}}|\gamma'(x)|_{\mathbb S}^s\cdot|\gamma(x)-x'|_{\mathbb S}^{-2s}f(x')\,dP(x')
=\int_{\dot{\mathbb R}}|x-x''|_{\mathbb S}^{-2s}\cdot|\gamma'(x'')|_{\mathbb S}^{1-s} f(\gamma(x''))\,dP(x'').
$$
The latter follows by the change of variables $x'=\gamma(x'')$ using the identity
$$
|\gamma(x)-\gamma(x'')|_{\mathbb S}^2=|x-x''|_{\mathbb S}^2\cdot |\gamma'(x)|_{\mathbb S}\cdot |\gamma'(x'')|_{\mathbb S}.\qedhere
$$
\end{proof}
\medskip

We now discuss the properties of $\mathcal B(s)$ in the semiclassical limit
which means that we put $ s := \frac12 - \nu + i h^{-1}$, 
where $\nu$ is bounded and $0<h\ll 1$.
In the notation of \eqref{eq:Bschi} we obtain an oscillatory integral 
representation:
$$
\chi_1\mathcal B(s)\chi_2 f(x)=(2\pi h)^{-1/2}\int_{\dot{\mathbb R}}e^{{i\over h}\Phi(x,x')}
\chi_1(x)\chi_2(x')|x-x'|^{2\nu-1}_{\mathbb S}f(x')\,dP(x')
$$
where the phase function $\Phi$ is defined by
\begin{equation}
\label{eq:phase}
\Phi(x,x'):=-2\log |x-x'|_{\mathbb S}=-2\log|x-x'|+2\log\langle x\rangle+2\log\langle x'\rangle
-\log 4.
\end{equation}
Thus $\chi_1\mathcal B(s)\chi_2$ is a semiclassical Fourier integral operator,
see for instance~\cite[\S2.2]{hgap}. 

The next two lemmas can be derived from the theory of these operators but
we present self-contained 
proofs in the Appendix, applying the method of stationary phase directly. That first gives
\begin{lemm}[Boundedness of $\mathcal B(s)$]
  \label{l:B-s-bdd}
Let $\chi_1,\chi_2\in C^\infty(\dot{\mathbb R})$ satisfy
$\supp\chi_1\cap\supp\chi_2=\emptyset$. Then
there exists $C$ depending only on $\nu,\chi_1,\chi_2$ such that
$$
\|\chi_1\mathcal B(s)\chi_2\|_{L^2(\dot{\mathbb R})\to L^2(\dot{\mathbb R})}\leq C.
$$
\end{lemm}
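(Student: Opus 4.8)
The plan is to prove Lemma~\ref{l:B-s-bdd} by reducing the bound on $\chi_1\mathcal B(s)\chi_2$ to a standard estimate on a semiclassical Fourier integral operator, which I will establish directly by stationary phase rather than quoting the general machinery. The first step is to record the oscillatory integral representation already written just before the lemma: with $s=\tfrac12-\nu+ih^{-1}$ we have
\[
\chi_1\mathcal B(s)\chi_2 f(x)=(2\pi h)^{-1/2}\int_{\dot{\mathbb R}}e^{\frac ih\Phi(x,x')}a(x,x')f(x')\,dP(x'),
\]
where $a(x,x'):=\chi_1(x)\chi_2(x')|x-x'|_{\mathbb S}^{2\nu-1}$ and $\Phi(x,x')=-2\log|x-x'|_{\mathbb S}$. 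Because $\supp\chi_1$ and $\supp\chi_2$ are disjoint, $|x-x'|_{\mathbb S}$ is bounded below on $\supp a$, so $a$ is a smooth, compactly supported amplitude (working in a chart of $\dot{\mathbb R}$, or covering $\dot{\mathbb R}$ by finitely many such charts and using a partition of unity), and all of its derivatives are bounded by constants depending only on $\nu,\chi_1,\chi_2$.

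The key structural input is the nondegeneracy of the phase, i.e.\ that $\partial^2_{xx'}\Phi\neq0$ on $\supp a$, which makes $\Phi$ the generating function of a canonical transformation. From $\Phi(x,x')=-2\log|x-x'|+2\log\langle x\rangle+2\log\langle x'\rangle-\log4$ one computes in the affine chart that $\partial_x\partial_{x'}\Phi=-2/(x-x')^2$, which is nonzero and, on $\supp a$, bounded above and below in absolute value; the analogous computation holds in the charts near $x=\infty$ or $x'=\infty$, or one simply notes the invariant statement $\partial_x\partial_{x'}(-2\log|x-x'|_{\mathbb S})\neq0$. Given this, the standard argument for $L^2$-boundedness of such operators applies: I would estimate the operator $\chi_1\mathcal B(s)\chi_2(\chi_1\mathcal B(s)\chi_2)^*$ (a $TT^*$ argument) whose Schwartz kernel is
\[
K(x,x'')=(2\pi h)^{-1}\int e^{\frac ih(\Phi(x,x')-\Phi(x'',x'))}a(x,x')\overline{a(x'',x')}\,dP(x'),
\]
and apply nonstationary/stationary phase in $x'$. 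Because $\partial_{x'}\Phi(x,x')-\partial_{x'}\Phi(x'',x'')$ vanishes (to the relevant order) only near $x=x''$, with Hessian in $x'$ of size $\sim\partial_{x'}^2\Phi$ which is controlled, stationary phase gives $|K(x,x'')|\leq C_N h^N$ for $|x-x''|\geq \sqrt h$ times a suitable factor and an integrable bound near the diagonal; Schur's test on $K$ then yields $\|\chi_1\mathcal B(s)\chi_2\|^2_{L^2\to L^2}\leq C$. The factor $|\Im s/2\pi|^{1/2}=|1/(2\pi h)|^{1/2}$ in the definition of $\mathcal B(s)$ is exactly what is needed for the $h$-powers to cancel.

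The main obstacle, and the reason the authors defer to the Appendix, is purely bookkeeping: making the stationary phase estimates uniform over the noncompact manifold $\dot{\mathbb R}$ and tracking that all constants depend only on $\nu$ and on finitely many seminorms of $\chi_1,\chi_2$. Concretely one must (i) choose a finite atlas of $\dot{\mathbb R}\simeq\mathbb S^1$ and a subordinate partition of unity so that both $x$ and $x''$ (and $x'$) live in compact coordinate patches, (ii) check that $\partial^2_{xx'}\Phi$ and all higher derivatives of $\Phi$ and $a$ are bounded with bounds independent of $h$ on the relevant supports, using the disjointness $\supp\chi_1\cap\supp\chi_2=\emptyset$ to stay away from the logarithmic singularity, and (iii) run the standard stationary/nonstationary phase lemma with these uniform bounds. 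None of these steps is deep, but writing them carefully is what the self-contained Appendix proof accomplishes; at the level of this plan I would simply invoke the stationary phase method as above.
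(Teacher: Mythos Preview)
Your approach is essentially the paper's: a $TT^*$ (the paper uses $T^*T$) argument followed by Schur's test on the resulting kernel. The only refinement is that no stationary phase or Hessian analysis is needed---pure nonstationary phase suffices: from $|\partial_{x'}(\Phi(x',x'')-\Phi(x',x))|\geq |x-x''|/C$ one integrates by parts $N$ times to get $|\mathcal K(x,x'')|\leq C_N h^{N-1}|x-x''|^{-N}$, and splitting the Schur integral at $|x-x''|=h$ (using $N=0$ and $N=2$) gives a bounded result; your threshold $\sqrt h$ with only the trivial bound near the diagonal would leave an $h^{-1/2}$ loss.
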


The next lemma gives partial invertibility of $\mathcal B(s)$. To state it
we recall the definition of {\em semiclassical Fourier transform},
\begin{equation}
\label{eq:semif}
\mathcal F_h f ( \xi ) := \int_\mathbb R f ( x ) e^{ - \frac i h x \xi } dx , 
\end{equation}
see \cite[\S 3.3]{ev-zw} for basic properties. We say that
an $h$-dependent family of functions $ f = f(h) $ is 
{\em semiclassically localized to frequencies $ | \xi | \leq M $} if 
for every $ N$, 
\begin{equation}
  \label{eq:semil}
|\mathcal F_h f({\xi}) |\leq C_{N} h^N|\xi|^{-N} \quad\text{when }
 \ |\xi|\geq {M}.
\end{equation}
We also recall {\em semiclassical quantization} 
$ a \mapsto \Op_h ( a ) = a ( x, h D_x ) $, $D_x:={1\over i}\partial_x$
\begin{equation}
\label{eq:qu} \Op_h(a) u ( x ) := \frac{1}{  2 \pi h } \int_{\mathbb R }
e^{\frac i h ( x - y ) \xi } a ( x , \xi )  u ( y )\, dy d \xi, 
\end{equation}
stressing that only elementary properties from~\cite[\S\S 4.2,4.3]{ev-zw} will be used.

The partial invertibility of $ \mathcal B( s ) $ means
that for $f\in L^2(\mathbb R)$ which is supported
in an interval $I$ and semiclassically localized to frequencies
$|\xi|\leq (5|I|)^{-1}$, we have
$f=\mathcal B(s)g+\mathcal O(h^\infty)$ on $I$ for some $g$ which is supported away from $I$: 
\begin{lemm}[Partial invertibility of $\mathcal B(s)$]
  \label{l:B-invert}
Let $I\subset \mathbb R$ be an interval and $K\geq 10$ satisfy $10K|I|\leq 1$.
Assume that
$$
A=\Op_h(a),\quad
a(x,\xi)\in \CIc(\mathbb R^2),\quad
\supp a\subset \{x\in I^\circ,\ |\xi|<2K\}.
$$
Then for all $ \psi_I \in C^\infty_{\rm{c}} ( I^\circ ) $ and $ \omega_I \in 
\CIc ( \dot {\mathbb R} \setminus I ) $ satisfying
\begin{equation}
  \label{e:psi-chosen}
\supp a\subset \{\psi_I(x)=1\},\quad
\supp(1-\omega_I)\subset I+\big[-\tfrac{1}{10K},\tfrac{1}{10K}\big],\quad
\end{equation}
there exists an operator $Q_I (s):L^2(\mathbb R)\to L^2(\dot{\mathbb R})$ uniformly
bounded in $h$, such that
$$
A=\psi_I\mathcal B(s)\omega_I Q_I (s)+\mathcal O(h^\infty)_{L^2(\mathbb R)\to L^2(\mathbb R)}.
$$
\end{lemm}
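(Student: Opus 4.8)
The plan is to construct $Q_I(s)$ as a semiclassical Fourier integral operator quantizing the inverse canonical transformation $\varkappa^{-1}$ associated to the phase $\Phi$, and then verify the identity $A = \psi_I\mathcal B(s)\omega_I Q_I(s) + \mathcal O(h^\infty)$ by a stationary phase computation. First I would set up the geometry: from~\eqref{eq:phase}, the operator $\psi_I\mathcal B(s)\omega_I$ is an FIO with phase $\Phi(x,x')=-2\log|x-x'|_{\mathbb S}$ and amplitude $(2\pi h)^{-1/2}\psi_I(x)\omega_I(x')|x-x'|_{\mathbb S}^{2\nu-1}$, associated to the canonical transformation $\varkappa:(x',-\partial_{x'}\Phi)\mapsto(x,\partial_x\Phi)$. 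Because $\partial_x\partial_{x'}\Phi = \partial_x\partial_{x'}(-2\log|x-x'|) = -2/(x-x')^2 \neq 0$ on $\{x\neq x'\}$, the phase is nondegenerate and $\varkappa$ is a local (in fact global) diffeomorphism. The key point forced by $\supp a\subset\{x\in I^\circ,\ |\xi|<2K\}$ and $10K|I|\leq 1$ is that the backward image $\varkappa^{-1}(\supp a)$ lands in a region where the first coordinate $x'$ stays a definite distance from $I$ — this is exactly a quantitative estimate on $x\mapsto x - \text{(solution of }\xi=\partial_x\Phi(x,x'))$, using that $|\partial_x\Phi|$ is small when $x'$ is close to $I$ while $|\xi|$ is bounded below away from $0$ on part of $\supp a$, and more carefully that $\omega_I\equiv 1$ there. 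This is what makes the cutoff $\omega_I$ and the choice~\eqref{e:psi-chosen} consistent.

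Next I would define $Q_I(s)$ explicitly. Writing the desired microlocal parametrix as $Q_I(s)f(x') = (2\pi h)^{-1/2}\int e^{-\frac{i}{h}\Phi(x,x')} q(x',x)\, f(x)\, dP(x)$ (an FIO with phase $-\Phi$, hence associated to $\varkappa^{-1}$), composing $\psi_I\mathcal B(s)\omega_I$ with $Q_I(s)$ produces an oscillatory integral in $(x',\xi)$-type variables whose phase is $\Phi(x,x')-\Phi(x'',x')$ with stationary points along the diagonal $x=x''$. Stationary phase in $x'$ (or the dual variable) reduces the composition to a pseudodifferential operator $\Op_h(b)$ with principal symbol $b_0(x,\xi) = \psi_I(x)\omega_I(x')|x-x'|_{\mathbb S}^{2\nu-1}\cdot q(x',x)\cdot|\det\partial^2\Phi|^{-1/2}$, where $x'=x'(x,\xi)$ is determined by $\xi=\partial_x\Phi(x,x')$; note this is well-defined and smooth precisely on $\supp a$ because there $\omega_I(x'(x,\xi))=1$. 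I then choose $q$ iteratively (the usual parametrix construction: invert the principal symbol on $\supp a$, then correct lower-order terms by a Neumann/Borel-summation argument) so that $\Op_h(b) = A + \mathcal O(h^\infty)$. Uniform boundedness of $Q_I(s)$ in $h$ follows from its having a compactly supported amplitude and a nondegenerate phase, by the (elementary) $L^2$ continuity argument used for Lemma~\ref{l:B-s-bdd}, together with the fact that the $\dot{\mathbb R}$ volume factors $dP$ are smooth and bounded on the relevant compact sets.

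The main obstacle I expect is bookkeeping the supports and the non-compactness of $\dot{\mathbb R}$: one must ensure that the amplitude $q$ can be taken compactly supported in $x'$ (away from $I$) so that $\omega_I q = q$, which requires the quantitative separation estimate $\varkappa^{-1}(\{x\in I^\circ,|\xi|<2K\})\subset \{x'\notin \supp(1-\omega_I)\}$ to actually hold — this is where the hypotheses $K\geq 10$ and $10K|I|\leq 1$ and the precise form of~\eqref{e:psi-chosen} get used, and it needs a short but careful computation with $\partial_x\Phi(x,x') = \frac{2}{\langle x\rangle^2}\big(\frac{\langle x\rangle^2}{x-x'} \cdot(\ldots)\big)$ expanded near $x'\in I$. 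Everything else is the standard method of stationary phase applied directly, exactly in the self-contained spirit the paper announces for the Appendix; no microlocal black boxes are needed, only that $\Phi$ generates a diffeomorphism and that the relevant phases have a single, nondegenerate, transversal critical point.
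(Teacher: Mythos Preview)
Your proposal is correct and takes essentially the same approach as the paper: an FIO parametrix with phase $-\Phi$, stationary phase for the composition, the support verification $|x-x'(x,\xi)|\geq 2/(1+|\xi|)$ to ensure $\omega_I(x'(x,\xi))=1$ on $\supp a$, and iterative construction of the amplitude. The paper streamlines the packaging by taking $Q_I(s):=\omega_I\,\mathcal B(1-s)\,\psi_I\,\Op_h(q)$, observing that $\mathcal B(1-s)$ already carries the phase $-\Phi$ (since $1-s=\tfrac12+\nu-\tfrac{i}{h}$); this makes uniform $L^2$-boundedness immediate from Lemma~\ref{l:B-s-bdd} and reduces the stationary-phase step to a single helper lemma computing the symbol of $\chi_1\mathcal B(s)\chi_2\mathcal B(1-s)\chi_3\Op_h(q)$ by oscillatory testing against $e^{ix\xi/h}$.
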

We recall that the proofs of both lemmas are given in the Appendix.

\section{Proof of the main theorem}
  \label{s:proof}

We recall that
$M=\Gamma\backslash\mathbb H^2$ is a convex co-compact hyperbolic quotient and that we 
fix a Schottky representation for $\Gamma$ as in~\S\ref{s:schottky}.
Finally we assume that the limit set $\Lambda_\Gamma$ satisfies fractal uncertainty principle
with exponent $\beta\geq 0$ in the sense of~\eqref{e:fup}. 

As seen in~\eqref{eq:delLs}, the main theorem
follows from showing that 
for
\begin{equation}
  \label{e:the-s}
s:={1\over 2}-\nu+ {i\over h},\quad
0\leq \nu\leq \beta-2\varepsilon
\end{equation}
we have 
\begin{equation}
  \label{e:the-u}
u\in\mathcal H(\mathbf D),\quad
\mathcal L_s u=u \ \ \Longrightarrow \ \ u \equiv 0 . 
\end{equation}
That follows from progressively obtaining more and more bounds on solutions
to the equation $ \mathcal L_s u = u $,
until we can use the fractal uncertainty principle~\eqref{e:fup} to show that $u\equiv 0$.

As shown in~\cite[\S5.1]{hgap}, we always have $\beta\leq {1\over 2}$. This implies
the inequality $\Re s\geq 0$ which we occasionally use below.

\subsection{A priori bounds}
  \label{s:a-priori}

We first use the equation in \eqref{e:the-u} to establish some a priori bounds on $u$.
Note that $u=\mathcal L_s u$ implies that $u$ extends holomorphically to a neighbourhood
of $\mathbf D$ and in particular is smooth up to the boundary of $\mathbf D$.

The prefactors $\gamma'_a(z)^s$ in~\eqref{e:transfer} are exponentially
large in $h$ when $z$ is not on the real line. To balance this growth we 
introduce the weight
$$
w_K:\mathbb C\to (0,\infty),\quad
w_K(z)=e^{ -{K |\Im z|}/{ h}} . 
$$
The constant $K$ is chosen so that a sufficiently high power of the transfer operator
$\mathcal L_s$ is bounded uniformly in $h$ on the weighted space induced by $w_K$:
\begin{lemm}
  \label{l:mapper}
There exist $K\geq 10$ and $n_0\in\mathbb N$ depending only on the Schottky data
so that, with $\mathbf D_2\subset\mathbf D^\circ$ defined in~\eqref{e:D-2},
\begin{equation}
  \label{e:mapper}
\sup_{\mathbf D}|w_K\mathcal L_s^{n_0}f|\leq C_\Gamma \sup_{\mathbf D_2}|w_K f|\quad\text{for all }
f\in \mathcal H(\mathbf D).
\end{equation}
\end{lemm}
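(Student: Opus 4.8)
The plan is to prove \eqref{e:mapper} by iterating the one-step estimate that the prefactors $\gamma_a'(z)^s$ contract the weighted sup-norm except near the real axis, where the interpolation bound of Lemma~\ref{l:interpolated-bound} takes over. First I would record the elementary one-step bound: for $z\in D_b$,
\[
|w_K(z)\,\mathcal L_s f(z)|\le \sum_{a\to b}e^{-K|\Im z|/h}\,|\gamma_a'(z)|^{\Re s}\,e^{\frac1h|\Im z|\cdot|\arg\gamma_a'(z)|\cdot?}\cdots
\]
— more precisely, since $|\gamma_a'(z)^s|=|\gamma_a'(z)|^{\Re s}\exp(-\tfrac1h\arg\gamma_a'(z))$ and $\arg\gamma_a'(z)=\mathcal O(|\Im z|)$ uniformly on $\mathbf D$ by \eqref{e:gamma-der}-type estimates (the $\gamma_a$ are fixed M\"obius maps, so $|\arg\gamma_a'(z)|\le C_\Gamma|\Im z|$ near $\mathbf I$), the bad exponential factor is $e^{C_\Gamma|\Im z|/h}$. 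Meanwhile $|\Im\gamma_a(z)|=|\gamma_a'(z)|_{\mathbb C}\cdot|\Im z|\cdot(1+\mathcal O(|\Im z|))$, and because $\gamma_a$ maps $\dot{\mathbb C}\setminus D_{\overline a}^\circ$ into $D_a$ which is compactly contained, the contraction factor on imaginary parts is bounded by some $\theta=\theta_\Gamma<1$ once we are, say, two letters deep. So $w_K(\gamma_a(z))^{-1}w_K(z)=\exp(-\tfrac Kh(|\Im z|-|\Im\gamma_a(z)|))\le \exp(-\tfrac Kh(1-\theta)|\Im z|)$, which for $K$ large enough (depending only on $C_\Gamma,\theta$) beats the $e^{C_\Gamma|\Im z|/h}$ loss with room to spare, leaving a factor $e^{-c|\Im z|/h}$.

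The difficulty is that this contraction of the weight is only useful when $|\Im z|$ is bounded below; right on $\mathbf I$ the weight is $1$ and there is nothing to exploit. This is exactly why $n_0$ and the two-step set $\mathbf D_2$ enter. So the next step is: split $\mathbf D$ (or rather the image one is estimating on) into the region where $|\Im z|\ge \eta$ for a small fixed $\eta=\eta_\Gamma>0$, where the weight contraction above gives $\sup_{\{|\Im z|\ge\eta\}}|w_K\mathcal L_s f|\le C_\Gamma e^{-c\eta/h}\sup_{\mathbf D}|w_Kf|$, hence after enough iterates is negligible compared to anything on $\mathbf I$; and the region $|\Im z|<\eta$, a thin neighbourhood of $\mathbf I$. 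On the thin region one uses that each $\gamma_a'(z)^s$, while it can be as large as $e^{C_\Gamma\eta/h}$, is applied to $f$ evaluated at $\gamma_a(z)$, and crucially that composing $\mathcal L_s$ a fixed number $n_0$ of times lands all the relevant branch points $\gamma_{\mathbf a'}(z)$ inside $\mathbf D_2$ (since words of length $\ge 2$ give disks $D_{\mathbf a}\subset\mathbf D_2$ by definition \eqref{e:D-2}). Combining: after $n_0$ steps, $\sup_{\mathbf D}|w_K\mathcal L_s^{n_0}f|$ is bounded by $C_\Gamma$ times the sup of $|w_Kf|$ over the union of the images $\gamma_{\mathbf a'}(\mathbf D)$ for $\mathbf a\in\mathcal W_{n_0+1}$, and these images are all contained in $\mathbf D_2$.

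The cleanest way to package the $|\Im z|<\eta$ case is to apply Lemma~\ref{l:interpolated-bound} to $f$ restricted to each half-disk component of $\mathbf D^\pm$: it gives $\sup_{\mathbf D_2^\pm}|f|\le(\sup_{\mathbf I}|f|)^c(\sup_{\mathbf D^\pm}|f|)^{1-c}$, and since on $\mathbf I$ the weight $w_K$ equals $1$ while on $\mathbf D^\pm$ it is $\le 1$, this reads $\sup_{\mathbf D_2}|w_Kf|\le(\sup_{\mathbf I}|w_Kf|)^c(\sup_{\mathbf D}|w_Kf|)^{1-c}$. Feeding this in after the weight-contraction step: for a given iterate we get something like $\sup_{\mathbf D}|w_K\mathcal L_s^{n_0}f|\le C_\Gamma\big(\sup_{\mathbf I}|w_K f|\big)^{c}\big(\sup_{\mathbf D}|w_Kf|\big)^{1-c}+C_\Gamma e^{-c\eta/h}\sup_{\mathbf D}|w_Kf|$. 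Actually, to land exactly the stated inequality \eqref{e:mapper} one only needs the version with $\mathbf D_2$ on the right, so I would organize the argument as: (i) choose $K$ via the one-step weight lemma so that one application of $\mathcal L_s$ maps the $w_K$-weighted sup over $\mathbf D$ to the $w_K$-weighted sup over $\mathbf D$ with uniformly bounded norm, i.e.\ $\sup_{\mathbf D}|w_K\mathcal L_sf|\le C_\Gamma\sup_{\mathbf D}|w_Kf|$; (ii) iterate $n_0$ times, at which point the finitely many holomorphic pullbacks involved all take $\mathbf D$ into $\mathbf D_2$, so the final sup can be taken over $\mathbf D_2$ rather than $\mathbf D$ at the cost of another $C_\Gamma$. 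The main obstacle is step (i): verifying that a \emph{single} power of $K$ — not growing with $h$ — simultaneously dominates the angular loss $\arg\gamma_a'(z)=\mathcal O(|\Im z|)$ from \emph{all} letters and for all $z\in\mathbf D$, including near $\partial\mathbf D$ where $|\Im z|$ is largest; this is where one must use that the $D_a$ are a fixed finite family of compactly-nested disks and that the relevant derivative and argument bounds of Lemma~\ref{l:gamma-der} hold uniformly on a fixed complex neighbourhood.
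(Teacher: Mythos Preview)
Your final organization (i)--(ii) has a genuine gap in step~(i). You want to choose $K$ so that a \emph{single} application of $\mathcal L_s$ is bounded on the $w_K$-weighted sup norm over~$\mathbf D$, i.e.\ so that $w_K(z)\,|\gamma_a'(z)^s|\,w_K(\gamma_a(z))^{-1}\le C_\Gamma$ for all $z\in D_b$, $a\to b$. But for a single letter there is no reason $|\gamma_a'(z)|<1$ on~$D_b$ (Lemma~\ref{l:gamma-der} only gives $|\gamma_a'(z)|\le C_\Gamma$), so $|\Im\gamma_a(z)|$ may exceed $|\Im z|$, and then the weight ratio $\exp\big(K(|\Im\gamma_a(z)|-|\Im z|)/h\big)$ grows with~$K$ rather than decays. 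No finite~$K$ rescues this. Your own earlier remark (``the contraction factor \dots\ is bounded by some $\theta<1$ once we are, say, two letters deep'') already flags the issue; the final plan then contradicts it.

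The paper's proof reverses the order of choices. First pick $n_0$ large enough, via Lemma~\ref{l:gamma-der} and~\eqref{e:contracting}, so that $\sup_{D_b}|\gamma'_{\mathbf a}|\le\tfrac12$ for every $\mathbf a\in\mathcal W_{n_0}$ with $\mathbf a\to b$; since $\gamma_{\mathbf a}$ preserves~$\mathbb R$ this forces $|\Im\gamma_{\mathbf a}(z)|\le\tfrac12|\Im z|$ and hence a weight gain $w_K(z)/w_K(\gamma_{\mathbf a}(z))\le e^{-K|\Im z|/(2h)}$. \emph{Then} choose $K$ (now depending on~$n_0$) large enough that $-\arg\gamma'_{\mathbf a}(z)\le\tfrac12 K|\Im z|$ for all such $\mathbf a$ and $z\in D_b$, which is possible since $\arg\gamma'_{\mathbf a}$ vanishes on~$\mathbb R$ and the finitely many $\gamma_{\mathbf a}$ are fixed. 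The resulting bound is $\le C_\Gamma\sup_{\mathbf D_2}|w_Kf|$ in one stroke, with no iteration of a one-step estimate. Note also that the role of~$n_0$ is to force contraction of the derivative, \emph{not} to land in~$\mathbf D_2$: already $\gamma_a(D_b)\subset D_{ab}\subset\mathbf D_2$ after a single step. Finally, Lemma~\ref{l:interpolated-bound} plays no part here; it enters only in the next lemma (Lemma~\ref{l:a-priori}), and the splitting into $|\Im z|\gtrless\eta$ is an unnecessary detour.
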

\begin{proof}
Using Lemma~\ref{l:gamma-der} and the contraction property~\eqref{e:contracting},
we choose $n_0$ such that
$$
\sup_{D_b}|\gamma'_{\mathbf a}|\leq \tfrac{1}{ 2}\quad\text{for all}\quad
\mathbf a\in\mathcal W_{n_0},\quad
\mathbf a\to b\in\mathcal A.
$$
(See \eqref{eq:Wn} for definitions.)
In particular, since $\gamma_{\mathbf a}$ maps the real line to itself, we have
$$
|\Im\gamma_{\mathbf a}(z)|\leq \tfrac{1}{2}|\Im z|\quad\text{for all}\quad
\mathbf a\in \mathcal W_{n_0},\quad
\mathbf a\to b\in\mathcal A,\quad
z\in D_b.
$$
Take $z\in D_b$ for some $b\in \mathcal A$. Then
$$
\begin{aligned}
|w_K\mathcal L_s^{n_0}f(z)|&\leq \sum_{\mathbf a\in \mathcal W_{n_0}\atop\mathbf a\to b}
w_K(z)\big|\gamma_{\mathbf a}'(z)^s f(\gamma_{\mathbf a}(z))\big|
\leq\sum_{\mathbf a\in \mathcal W_{n_0}\atop\mathbf a\to b}
{w_K(z)|\gamma_{\mathbf a}'(z)^s|\over w_K(\gamma_{\mathbf a}(z))}
\sup_{\mathbf D_2}|w_Kf|
\\&
\leq C_\Gamma\sum_{\mathbf a\in \mathcal W_{n_0}\atop\mathbf a\to b}
e^{ - ( K |\Im z | + 2 \arg \gamma_{\mathbf a}' ( z ) )/2h } 
\sup_{\mathbf D_2}|w_Kf|
\end{aligned}
$$
where in the last inequality we use the bound $|\gamma'_{\mathbf a}(z)^s|\leq C_\Gamma e^{-\arg\gamma_{\mathbf a}'(z)/h} $.
It remains to choose $K$ large enough so that
$$
-\arg\gamma_{\mathbf a}'(z)\leq  \tfrac 1 2 {K|\Im z|}\quad\text{for all}\quad
\mathbf a\in \mathcal W_{n_0},\quad
\mathbf a\to b\in \mathcal A,\quad
z\in D_b
$$
which is possible since $\arg\gamma_{\mathbf a}'(z)=0$ when $z\in \mathbb R$.
\end{proof}
We next show that for $u$ satisfying~\eqref{e:the-u}, the norm
of $u$ on $\mathbf D$ is controlled by its norm on $\mathbf I=\mathbf D\cap \mathbb R$:
\begin{lemm}
\label{l:a-priori}
We have
\begin{equation}
  \label{e:a-priori}
\sup_{\mathbf D}|w_Ku|\leq C_\Gamma\sup_{\mathbf I}|u|.
\end{equation}
\end{lemm}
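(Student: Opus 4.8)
The plan is to combine the two previous lemmas with an iteration argument. The key point is that Lemma~\ref{l:mapper} controls $\sup_{\mathbf D}|w_K\mathcal L_s^{n_0}f|$ by $\sup_{\mathbf D_2}|w_Kf|$, while Lemma~\ref{l:interpolated-bound} interpolates $\sup_{\mathbf D_2}|f|$ between $\sup_{\mathbf I}|f|$ and $\sup_{\mathbf D}|f|$ with a fixed gain $c\in(0,1]$. Since $u=\mathcal L_s u$ gives $u=\mathcal L_s^{n_0}u$, applying Lemma~\ref{l:mapper} with $f=u$ yields $\sup_{\mathbf D}|w_Ku|\leq C_\Gamma\sup_{\mathbf D_2}|w_Ku|$.

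First I would note that on $\mathbf D_2\subset\mathbf D$ and on $\mathbf I$ we may bound $w_Ku$ crudely: $w_K\leq 1$ everywhere, and $w_K\equiv 1$ on $\mathbf I$. So $\sup_{\mathbf D_2}|w_Ku|\leq\sup_{\mathbf D_2}|u|$. Now I would apply Lemma~\ref{l:interpolated-bound} separately to $f=u$ on $\mathbf D^+$ and on $\mathbf D^-$ (using that $u$ extends holomorphically to a neighbourhood of $\mathbf D$ and is continuous up to the boundary, as remarked after~\eqref{e:the-u}), obtaining
$$
\sup_{\mathbf D_2^\pm}|u|\leq\big(\sup_{\mathbf I}|u|\big)^c\big(\sup_{\mathbf D^\pm}|u|\big)^{1-c}\leq\big(\sup_{\mathbf I}|u|\big)^c\big(\sup_{\mathbf D}|u|\big)^{1-c}.
$$
Taking the maximum over $\pm$ gives $\sup_{\mathbf D_2}|u|\leq\big(\sup_{\mathbf I}|u|\big)^c\big(\sup_{\mathbf D}|u|\big)^{1-c}$.

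The only remaining subtlety is replacing $\sup_{\mathbf D}|u|$ on the right-hand side by $\sup_{\mathbf D}|w_Ku|$, which is what Lemma~\ref{l:mapper} controls; this requires knowing a priori that $\sup_{\mathbf D}|u|<\infty$, which holds since $u$ is continuous on the compact set $\mathbf D$. Chaining the three inequalities,
$$
\sup_{\mathbf D}|w_Ku|\leq C_\Gamma\big(\sup_{\mathbf I}|u|\big)^c\big(\sup_{\mathbf D}|u|\big)^{1-c}.
$$
Since $w_K$ is bounded below on any compact subset of $\mathbf D\setminus\mathbb R$ but degenerates near $\mathbb R$, I cannot directly bound $\sup_{\mathbf D}|u|$ by $\sup_{\mathbf D}|w_Ku|$; instead I would run the estimate on a slightly shrunk family of disks, or more simply argue as follows. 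Fix $z_0\in\mathbf D$ with $|u(z_0)|=\sup_{\mathbf D}|u|=:\mathcal M$. If $\Im z_0=0$ then $\mathcal M=|u(z_0)|\leq\sup_{\mathbf I}|u|$ and~\eqref{e:a-priori} is immediate (since $w_K\leq 1$). Otherwise $w_K(z_0)=e^{-K|\Im z_0|/h}>0$, and the displayed bound combined with $\sup_{\mathbf D}|w_Ku|\geq w_K(z_0)\mathcal M$ would give an inequality involving $h$; this is too lossy. The clean fix, which I expect to be the actual argument, is to absorb: rewrite the chained inequality as $\mathcal M_w\leq C_\Gamma\,m^c\,\mathcal M^{1-c}$ where $\mathcal M_w:=\sup_{\mathbf D}|w_Ku|$, $m:=\sup_{\mathbf I}|u|$, and separately observe that $\mathcal M\leq C\mathcal M_w$ fails — so instead one uses that Lemma~\ref{l:mapper} already gave $\mathcal M_w\leq C_\Gamma\sup_{\mathbf D_2}|w_Ku|\leq C_\Gamma\sup_{\mathbf D_2}|u|$, and then Lemma~\ref{l:interpolated-bound} applied to $w_Ku$ is not available since $w_Ku$ is not holomorphic. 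The main obstacle, then, is precisely this bookkeeping: one must apply the interpolation lemma to the holomorphic function $u$ (not $w_Ku$), and the factor $\sup_{\mathbf D}|u|$ that appears must be controlled. I would handle it by noting $\sup_{\mathbf D}|u|=\sup_{\mathbf D}|\mathcal L_s^{n_0}u|$ and, since on $\mathbf D$ the weight satisfies $w_K\geq e^{-C_\Gamma/h}$ is the wrong direction — so the genuinely correct route is to iterate: set $\mathcal M_w=\sup_{\mathbf D}|w_Ku|$ and note $\sup_{\mathbf D}|u|\leq e^{C_\Gamma/h}\mathcal M_w$ is useless, hence one instead proves~\eqref{e:a-priori} by applying Lemma~\ref{l:mapper} and then Lemma~\ref{l:interpolated-bound} to get $\mathcal M_w\le C_\Gamma m^c\mathcal M^{1-c}$, and finally uses a separate soft argument (e.g. Phragmén–Lindelöf / the fact that $w_Ku$ attains its max and comparing $\mathbf D$ with $\mathbf D_2$ again after one more application of the transfer operator) to close the loop; I expect the paper simply observes that $\sup_{\mathbf D}|u|\le C_\Gamma\sup_{\mathbf I}|u|$ follows because $\mathcal M_w\ge w_K|_{\mathbf D_2}\cdot\sup_{\mathbf D_2}|u|$ and $w_K|_{\mathbf D_2}\ge c_\Gamma>0$ is false in $h$ — so the honest statement is that~\eqref{e:a-priori} is weaker than a pointwise bound and the $e^{C/h}$ losses are exactly what is tolerated, matching~\eqref{eq:compm}. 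I would therefore present the proof as: apply Lemma~\ref{l:mapper}, then Lemma~\ref{l:interpolated-bound} on $\mathbf D^\pm$, then absorb the $\big(\sup_{\mathbf D}|u|\big)^{1-c}$ factor using Young's inequality together with the trivial bound $w_K\le 1$ to re-express everything in terms of $\sup_{\mathbf I}|u|$ alone, yielding~\eqref{e:a-priori}.
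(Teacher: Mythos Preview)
Your proposal has a genuine gap, and in fact you identify the obstacle yourself without resolving it. Applying Lemma~\ref{l:interpolated-bound} to the holomorphic function $u$ gives
$$
\sup_{\mathbf D_2}|u|\leq\big(\sup_{\mathbf I}|u|\big)^c\big(\sup_{\mathbf D}|u|\big)^{1-c},
$$
and the factor $\big(\sup_{\mathbf D}|u|\big)^{1-c}$ is not controlled by anything $h$-uniform: $\sup_{\mathbf D}|u|$ can be as large as $e^{C/h}\sup_{\mathbf D}|w_Ku|$, so no absorption via Young's inequality or iteration will close the loop without an $e^{C/h}$ loss. Your various attempts in the second half of the proposal all run into this same wall.

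The missing idea is this: although $w_Ku$ is not holomorphic, on each half $\mathbf D^\pm$ the function $f_\pm(z):=e^{\pm iKz/h}u(z)$ \emph{is} holomorphic and satisfies $|f_\pm|=w_K|u|$ there (and $|f_\pm|=|u|$ on $\mathbf I$). Applying Lemma~\ref{l:interpolated-bound} to $f_\pm$ instead of to $u$ gives directly
$$
\sup_{\mathbf D_2}|w_Ku|\leq\big(\sup_{\mathbf I}|u|\big)^c\big(\sup_{\mathbf D}|w_Ku|\big)^{1-c}.
$$
Combined with $\sup_{\mathbf D}|w_Ku|\leq C_\Gamma\sup_{\mathbf D_2}|w_Ku|$ from Lemma~\ref{l:mapper}, this yields $\big(\sup_{\mathbf D}|w_Ku|\big)^c\leq C_\Gamma\big(\sup_{\mathbf I}|u|\big)^c$, which is exactly~\eqref{e:a-priori}. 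This is what the paper does.
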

\begin{proof}
Let $n_0$ come from Lemma~\ref{l:mapper}. Since $u=\mathcal L_s^{n_0}u$, \eqref{e:mapper} shows
\begin{equation}
  \label{e:ap-1}
\sup_{\mathbf D}|w_Ku|\leq C_\Gamma\sup_{\mathbf D_2}|w_Ku|.
\end{equation}
Applying Lemma~\ref{l:interpolated-bound} to the functions
$\exp(\pm iK z/h)u(z)$ in $\mathbf D^\pm$, we get
for some $c\in (0,1]$ depending only on the Schottky data
\begin{equation}
  \label{e:ap-2}
\sup_{\mathbf D_2}|w_K u|\leq \big(\sup_{\mathbf I}|u|\big)^{c}
\big(\sup_{\mathbf D}|w_K u|\big)^{1-c}.
\end{equation}
Together~\eqref{e:ap-1} and~\eqref{e:ap-2} imply~\eqref{e:a-priori}.
\end{proof}

\subsection{Cutoffs and microlocalization}

Let $ Z(h^\rho)\subset\mathcal W^\circ$ be the partition defined in~\eqref{e:Z-tau}, 
where $ \rho \in ( 0 ,1 ) $ is fixed as in \eqref{e:fup}.
By Lemma~\ref{l:transfer-power}
\begin{equation}
  \label{e:u-transfer}
\mathcal L_s(h^\rho)u=u\quad\text{where}\quad
\mathcal L_s(h^\rho):=\mathcal L_{\overline{Z(h^\rho)},s}.
\end{equation}
That is,
\begin{equation}
  \label{e:transfer-reminder}
u(x)=
\mathcal L_s(h^\rho)u(x)=\sum_{\mathbf a\in Z(h^\rho)\atop \overline{\mathbf a}\rsa b}
\gamma'_{\overline{\mathbf a}'}(x)^s u(\gamma_{\overline{\mathbf a}'}(x)),\quad
x\in I_b,\
b\in\mathcal A.
\end{equation}
From~\cite[Lemma~2.10]{hyperfup} and Lemma~\ref{l:gamma-der}, we have
for all $\mathbf a=a_1\dots a_n\in Z(h^\rho)$
\begin{align}
  \label{e:Z-bound-1}
C_\Gamma^{-1}h^\rho&\leq |I_{\mathbf a}|\leq  h^\rho,
\\
  \label{e:Z-bound-2}
C_\Gamma^{-1}h^\rho&\leq |I_{\overline{\mathbf a}}|\leq C_\Gamma h^\rho,
\\
  \label{e:Z-bound-3}
C_\Gamma^{-1}h^\rho&\leq \gamma'_{\overline{\mathbf a}'}\leq C_\Gamma h^\rho\quad\text{on }I_{\overline{a_1}}.
\end{align}
Let $\mathbf I'$ be defined by~\eqref{e:D-def};
recall also the definition~\eqref{e:I-a-prime} of $I'_{\mathbf a}$.
Since $\Re s\geq 0$,
\eqref{e:transfer-reminder} gives
\begin{equation}
  \label{e:reduced-norm}
\sup_{\mathbf I}|u|\leq C_\Gamma\sup_{\mathbf I'}|u|\leq C_\Gamma \sum_{\mathbf a\in Z(h^\rho)}\sup_{I'_{\overline{\mathbf a}}}|u|
\end{equation}
where for the first inequality we used $u=\mathcal L_s u$.
That is, the behavior of $u$ on $\mathbf I$ (and thus by Lemma~\ref{l:a-priori},
on $\mathbf D$) is controlled by its behavior on the intervals $I'_{\overline{\mathbf a}}$
for $\mathbf a\in Z(h^\rho)$.

We now define pieces of $u$ localized to each interval $I_{\overline{\mathbf a}}$.
To that aim, for each $\mathbf a\in Z(h^\rho)$
we choose a cutoff function
\begin{equation}
  \label{e:chi-a}
\chi_{\mathbf a}\in \CIc(I_{\overline{\mathbf a}}^\circ;[0,1]),\quad
\supp(1-\chi_{\mathbf a})\cap I'_{\overline{\mathbf a}}=\emptyset,\quad
\sup|\partial^j_x\chi_{\mathbf a}|\leq C_{\Gamma,j}h^{-\rho j}.
\end{equation}
This is possible since $d_{\mathbb R}(I'_{\overline{\mathbf a}},\mathbb R\setminus I_{\overline{\mathbf a}})\geq C_\Gamma^{-1}h^\rho$
by~\eqref{e:Z-bound-3}. We then define
\begin{equation}
  \label{e:u-a}
u_{\mathbf a}:=\langle x\rangle^{2s}\chi_{\mathbf a}u\in \CIc(I_{\overline{\mathbf a}}^\circ),\quad
\mathbf a\in Z(h^\rho).
\end{equation}
Equation~\eqref{e:transfer-reminder} implies that, with $T_{\gamma,s}$ given by~\eqref{e:T-gamma},
\begin{equation}
  \label{e:transfer2}
\langle x\rangle^{2s}u(x)=\sum_{\mathbf a\in Z(h^\rho)\atop \overline{\mathbf a}\rsa b}
T_{\gamma_{\overline{\mathbf a}'},s}u_{\mathbf a}(x),\quad
x\in I'_b,\
b\in\mathcal A.
\end{equation}
The next lemma shows that $u_{\mathbf a}$'s are semiclassically localized
to a bounded set in $ \xi $.
This is where we use that $\rho<1$: if we instead put $\rho=1$ then
multiplication by $\chi_{\mathbf a}$ would inevitably blur the support of the semiclassical Fourier transform.
\begin{lemm}
  \label{l:u-fourier}
Let $K\geq 10$ be chosen in Lemma~\ref{l:mapper}.
Then for each $\mathbf a\in Z(h^\rho)$, $ u_{\mathbf a}$ 
is semiclassically localized to frequencies $  |\xi| \leq \frac{3K}2 $. More precisely, 
for all $N $,
\begin{equation}
\label{e:u-fourier}
| \mathcal F_h u_{\mathbf a} ( \xi ) | \leq C_{\Gamma, N } h^N |\xi|^{-N} \sup_{ 
\mathbf I} | u|  \quad\text{when }
\ |\xi|\geq \tfrac{3K}2 .
\end{equation}
\end{lemm}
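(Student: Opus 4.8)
The plan is to reduce the Fourier-decay estimate for $u_{\mathbf a}$ to the a priori bound of Lemma~\ref{l:a-priori}, using holomorphy of $u$ in a complex neighbourhood of $I_{\overline{\mathbf a}}$. The point is that $u_{\mathbf a}=\langle x\rangle^{2s}\chi_{\mathbf a}u$ is a compactly supported function whose extension off the real axis is controlled, and the semiclassical Fourier transform of such a function decays rapidly once $|\xi|$ exceeds the rate of exponential growth allowed by the complex extension. Concretely, I would first record that by~\eqref{e:reduced-norm}, Lemma~\ref{l:a-priori}, and~\eqref{e:Z-bound-2}--\eqref{e:Z-bound-3}, the function $u$ satisfies, on a fixed complex neighbourhood of $I_{\overline{\mathbf a}}$ of size comparable to $|I_{\overline{\mathbf a}}|\sim h^\rho$ (say inside $\mathbf D$), the bound $|u(z)|\leq C_\Gamma e^{K|\Im z|/h}\sup_{\mathbf I}|u|$; note the factor $\langle x\rangle^{2s}$ is harmless since $|\langle z\rangle^{2s}|$ is bounded above and below on the relevant disks with at worst an $e^{C|\Im z|/h}$ distortion, which can be absorbed by slightly enlarging $K$ (this is exactly why the statement has $3K/2$ rather than $K$).

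Next I would run a contour-shift argument. Writing
\[
\mathcal F_h u_{\mathbf a}(\xi)=\int_{\mathbb R} e^{-\frac i h x\xi} u_{\mathbf a}(x)\,dx,
\]
I want to push the contour into the lower half-plane (for $\xi>0$) or upper half-plane (for $\xi<0$) by a height $t\sim h^{\rho}$ proportional to the size of $I_{\overline{\mathbf a}}$. The obstruction is that $\chi_{\mathbf a}$ is only smooth, not holomorphic, so one cannot literally shift the contour of $e^{-\frac ih x\xi}\chi_{\mathbf a}(x)u(x)$. The standard fix is to integrate by parts in $x$ first: each integration by parts gains a factor $h/|\xi|$ and produces derivatives of $\chi_{\mathbf a}$, which cost $h^{-\rho}$ by~\eqref{e:chi-a}, and a derivative of $\langle x\rangle^{2s}u$, which costs $O(h^{-1})$ from the $\langle x\rangle^{2s}$ factor and $O(h^{-\rho})$ from applying the holomorphic bound on $u$ with Cauchy estimates at scale $h^\rho$. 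After $N$ integrations by parts the integrand is bounded by $C_{\Gamma,N}\,(h/|\xi|)^N h^{-N}\sup_{\mathbf I}|u|\cdot|I_{\overline{\mathbf a}}|=C_{\Gamma,N}|\xi|^{-N}h^{\rho}\sup_{\mathbf I}|u|$, which is not yet $h^N$ — so pure integration by parts is insufficient, and one genuinely needs the complex shift to convert the bad $h^{-1}$ per step into something better when $|\xi|$ is large.

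The clean way, which I would adopt, is: introduce an almost-holomorphic-type argument, or more simply use the following two-step estimate. Apply Lemma~\ref{l:mapper} / the identity $u=\mathcal L_s^{n_0}u$ to express $u$ on a complex neighbourhood in terms of its values near the real axis with the weight $w_K$, giving the sharp bound $|u(z)|\le C_\Gamma e^{K|\Im z|/h}\sup_{\mathbf I}|u|$ uniformly in a fixed (Schottky-data dependent) neighbourhood of $\mathbf I$, independent of $h$. Then for $\mathcal F_h u_{\mathbf a}$, write $u_{\mathbf a}=\langle x\rangle^{2s}\chi_{\mathbf a}u$ and use the Paley--Wiener-type lemma: if $g$ is supported in an interval of length $\ell$, extends holomorphically to $\{|\Im z|<c\}$ (a fixed neighbourhood) with $|g(z)|\le M e^{K|\Im z|/h}$, and has $|\partial_x^j g|\le M C_j h^{-j}$ on $\mathbb R$ for all $j$, then $|\mathcal F_h g(\xi)|\le C_N M h^N|\xi|^{-N}$ once $|\xi|\ge \tfrac{3K}{2}$ (say). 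To prove this last lemma I would split $g=g_{\mathrm{hol}}\cdot(\text{cutoff})$, shift the contour by $ct_0:=\min(c,1)$ say a fixed amount, picking up $e^{-ct_0|\xi|/h}\cdot e^{Kct_0/h}=e^{-(|\xi|-K)ct_0/h}$, which for $|\xi|\ge \tfrac 32 K$ is $\le e^{-Kct_0/(2h)}=O(h^\infty)$, after first doing finitely many integrations by parts against the smooth cutoff to make the boundary-of-neighbourhood terms converge; the $h^{-j}$ losses from $\chi_{\mathbf a}$-derivatives and the $\langle x\rangle^{2s}$ factor are beaten by the $e^{-c/h}$ gain.

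The main obstacle I expect is bookkeeping the interaction of three scales: the macroscopic fixed neighbourhood on which the $e^{K|\Im z|/h}$ holomorphic bound holds (independent of $h$, coming from Lemma~\ref{l:a-priori}), the microscopic scale $h^\rho$ of the cutoff $\chi_{\mathbf a}$ and of Cauchy estimates for $u$, and the semiclassical scale $h$. The key realization that makes everything work is that the complex-extension bound $|u(z)|\le C_\Gamma e^{K|\Im z|/h}\sup_{\mathbf I}|u|$ holds on a \emph{fixed} (not $h$-shrinking) neighbourhood, so the contour shift can be by a fixed height, giving a genuine $e^{-c/h}$ and hence $O(h^\infty)$ decay for $|\xi|$ strictly larger than $K$; the threshold $\tfrac32 K$ leaves room to absorb the $\langle x\rangle^{2s}$ distortion of the weight. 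The other routine-but-delicate point is justifying that, because $\chi_{\mathbf a}$ is merely $C^\infty_c$, one must first integrate by parts enough times (using~\eqref{e:chi-a}) to render the shifted contour integral absolutely convergent and its corner terms negligible before invoking the holomorphic growth bound on $u$.
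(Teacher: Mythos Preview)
Your overall strategy matches the paper's: use the a~priori bound of Lemma~\ref{l:a-priori} on the holomorphic factor $\langle z\rangle^{2s}u(z)$, deal with the non-holomorphic cutoff $\chi_{\mathbf a}$ via an almost analytic extension, and convert the Fourier integral into a two-dimensional integral by Stokes. Your observation about the $\langle z\rangle^{2s}$ factor contributing $e^{2|\Im z|/h}$ and being absorbed by the margin between $K$ and $\tfrac{3K}{2}$ is also exactly what the paper does.

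However, there is a genuine gap in your mechanism. You claim that shifting the contour by a \emph{fixed} height $t_0$ yields a gain $e^{-(|\xi|-K)t_0/h}=\mathcal O(e^{-c/h})$, and that this exponential beats the polynomial losses from derivatives of $\chi_{\mathbf a}$. This is not how the argument works. The cutoff $\chi_{\mathbf a}$ lives on an interval of length $\sim h^\rho$, so any almost analytic extension $\widetilde\chi_{\mathbf a}$ is supported in a disk of diameter $\sim h^\rho$; the boundary term at fixed height $t_0\gg h^\rho$ simply vanishes. What remains is the \emph{bulk} $\bar\partial$-integral
\[
\int_{D_{\overline{\mathbf a}}\cap\{\pm\Im z\le 0\}} u(z)\,\langle z\rangle^{2s}\,e^{-iz\xi/h}\,\bar\partial_z\widetilde\chi_{\mathbf a}(z)\,d\bar z\wedge dz,
\]
and this is where the entire estimate lives. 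Using $|\bar\partial_z\widetilde\chi_{\mathbf a}(z)|\le C_{\Gamma,N}h^{-\rho}(h^{-\rho}|\Im z|)^N$ together with $|u(z)\langle z\rangle^{2s}|\le C_\Gamma e^{(K+2)|\Im z|/h}\sup_{\mathbf I}|u|$ and $|e^{-iz\xi/h}|=e^{-|\xi||\Im z|/h}$, the $|\Im z|$-integral produces a factor $h^{1+N(1-\rho)}|\xi|^{-1-N}$, not $e^{-c/h}$. The gain is therefore only polynomial, $h^{N(1-\rho)}$, and it works \emph{precisely because} $\rho<1$; the paper flags this explicitly just before the statement of the lemma. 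Your proposal never isolates this bulk term, and your integration-by-parts preliminaries do not circumvent it. Once you replace the claimed exponential gain by the correct $h^{N(1-\rho)}$ bookkeeping, your argument becomes the paper's proof.
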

\Remark A finer compact microlocalization statement can be given using the FBI transform~-- see~\cite[Proposition 2.2]{JinCauchy}.
\begin{proof}
For each $\mathbf a\in Z(h^\rho)$,
let $ \widetilde \chi_{\mathbf a} \in \CIc ( D_{\overline{\mathbf a}}^\circ) $ be an almost analytic extension of $\chi_{\mathbf a}$, more precisely for each $N$
$$
\begin{aligned}
\widetilde\chi_{\mathbf a}|_{\mathbb R}&=\chi_{\mathbf a},\ \ \ \ 
h^\rho|\bar\partial_z \widetilde\chi_{\mathbf a}(z)| \leq C_{\Gamma,N}\big(h^{-\rho}|\Im z|\big)^N.
\end{aligned} 
$$
See for instance~\cite[Theorem 3.6]{ev-zw} for a construction of such extension; here we map $D_{\overline{\mathbf a}}$
to the unit disk and use the derivative bounds~\eqref{e:chi-a}.
Let $\langle z\rangle^{2s}:=(1+z^2)^s$ be the holomorphic extension of $\langle x\rangle^{2s}$
to $D_{\overline{\mathbf a}}$. We note that
\[ |\langle z\rangle^{2s}|\leq C_\Gamma\exp(2|\Im z|/h), \ \ z\in D_{\overline{\mathbf a}} . \]
Since $u$ is holomorphic in~$D_{\overline{\mathbf a}}$, we have
\[
\begin{split}
\mathcal F_h u_{\mathbf a}({\xi})  & = 
\int_{\mathbb R} u ( x ) \langle x\rangle^{2s}e^{ - \frac i h \xi x }\widetilde\chi_{\mathbf a} ( x )  \, dx 
\\ & 
=-{(\sgn\xi)} \int_{D_{\overline{\mathbf a}}\cap \{(\sgn\xi)\Im z\leq 0\}} 
 u ( z ) \langle z\rangle^{2s}e^{ - \frac i h \xi z }\bar \partial_z  \widetilde\chi_{\mathbf a}  ( z ) \,d\bar z\wedge dz . 
 \end{split} \]
Using Lemma~\ref{l:a-priori} and the fact that $|\xi|\geq 3K/2$ we estimate for all $N$,
$$
\begin{aligned}
|\mathcal F_h u_{\mathbf a}({\xi})| &\leq C_{\Gamma,N}\sup_{\mathbf I}|u|\int_0^{\infty}
(h^{-\rho}y)^N
e^{(K-|\xi|+2)y/ h}
\,dy
\\&
\leq C_{\Gamma,N}\sup_{\mathbf I}|u|\int_0^\infty (h^{-\rho}y)^N
e^{-|\xi|y/ 5h} 
\,dy
\leq C_{\Gamma,N}h^{1+N(1-\rho)}|\xi|^{-1-N}\sup_{\mathbf I}|u|.
\end{aligned}
$$
Since $\rho<1$ and $N$ can be chosen arbitrary, \eqref{e:u-fourier} follows.
\end{proof}
Lemma~\ref{l:u-fourier} implies that the sup-norm of $u_{\mathbf a}$
can be estimated by its $L^2$ norm:
\begin{lemm}
  \label{l:l2-reduced}
We have for all $N$ and $\mathbf a\in Z(h^\rho)$,
\begin{equation}
  \label{e:l2-reduced}
\sup|u_{\mathbf a}|\leq C_\Gamma h^{-1/2}\|u_{\mathbf a}\|
+C_{\Gamma,N}h^N\sup_{\mathbf I}|u|.
\end{equation}
\end{lemm}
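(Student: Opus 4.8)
The plan is to deduce the pointwise bound~\eqref{e:l2-reduced} from the frequency localization established in Lemma~\ref{l:u-fourier} by a standard Fourier-analytic argument, using that $u_{\mathbf a}$ is supported in $I_{\overline{\mathbf a}}^\circ$, an interval of length comparable to $h^\rho$ by~\eqref{e:Z-bound-2}. First I would split the Fourier integral: by Fourier inversion,
\begin{equation*}
u_{\mathbf a}(x)=\frac{1}{2\pi h}\int_{\mathbb R}e^{\frac ih x\xi}\mathcal F_h u_{\mathbf a}(\xi)\,d\xi
=\frac{1}{2\pi h}\Big(\int_{|\xi|\leq \frac{3K}2}+\int_{|\xi|>\frac{3K}2}\Big)e^{\frac ih x\xi}\mathcal F_h u_{\mathbf a}(\xi)\,d\xi.
\end{equation*}
For the high-frequency piece I would apply the bound~\eqref{e:u-fourier} with $N$ replaced by, say, $N+2$: since $\int_{|\xi|>\frac{3K}2}|\xi|^{-N-2}\,d\xi\leq C_K$, this contributes at most $C_{\Gamma,N}h^{N+1}\sup_{\mathbf I}|u|$, which is absorbed into the second term of~\eqref{e:l2-reduced} (the factor $h^{-1}$ from Fourier inversion is harmless after relabelling $N$).

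For the low-frequency piece I would estimate in $L^2_x$ rather than pointwise first: by Cauchy--Schwarz in $\xi$ over the bounded set $\{|\xi|\leq\frac{3K}2\}$, whose measure is $3K$, we get
\begin{equation*}
\Big|\frac{1}{2\pi h}\int_{|\xi|\leq\frac{3K}2}e^{\frac ih x\xi}\mathcal F_h u_{\mathbf a}(\xi)\,d\xi\Big|
\leq \frac{1}{2\pi h}(3K)^{1/2}\Big(\int_{\mathbb R}|\mathcal F_h u_{\mathbf a}(\xi)|^2\,d\xi\Big)^{1/2}.
\end{equation*}
By the Plancherel theorem for the semiclassical Fourier transform, $\|\mathcal F_h u_{\mathbf a}\|_{L^2}=(2\pi h)^{1/2}\|u_{\mathbf a}\|_{L^2}$ (see~\cite[\S3.3]{ev-zw}), so the right-hand side is $\leq C_\Gamma h^{-1/2}\|u_{\mathbf a}\|$. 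Since this bound is uniform in $x$, it gives precisely the first term of~\eqref{e:l2-reduced}. Combining the two pieces and relabelling $N$ yields~\eqref{e:l2-reduced}. Here I should note that the $L^2$ norm on the right is the $L^2(\mathbb R)$ norm, which is equivalent to the $L^2(\dot{\mathbb R},dP)$ norm up to $C_\Gamma$ since $u_{\mathbf a}$ is supported in the fixed compact set $\mathbf I$ where $\langle x\rangle$ is bounded above and below.

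There is no serious obstacle here; the only mild point of care is bookkeeping the powers of $h$ so that the $h^{-1}$ from Fourier inversion on the high-frequency tail is genuinely beaten by the gain $h^{N(1-\rho)}$ hidden in~\eqref{e:u-fourier}, which is why one applies Lemma~\ref{l:u-fourier} with a sufficiently large exponent before relabelling. One could alternatively phrase the whole argument as: $u_{\mathbf a}=\psi\,\Op_h(\phi(\xi))u_{\mathbf a}+\mathcal O(h^\infty)_{L^2}\sup_{\mathbf I}|u|$ for a cutoff $\phi\in\CIc(\{|\xi|<2K\})$ equal to $1$ on $\{|\xi|\leq\frac{3K}2\}$ and $\psi\in\CIc(I^\circ_{\overline{\mathbf a}})$ equal to $1$ near $\supp u_{\mathbf a}$, and then invoke the elementary bound $\|\Op_h(\phi)\|_{L^2\to L^\infty}\leq C_\Gamma h^{-1/2}$ (which follows from writing the kernel explicitly and using Cauchy--Schwarz). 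I would present whichever of these two equivalent routes is shorter given the conventions already fixed in the paper.
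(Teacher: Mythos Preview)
Your proof is correct and follows essentially the same approach as the paper: Fourier inversion, split at $|\xi|=\tfrac{3K}{2}$, Cauchy--Schwarz plus Plancherel on the low-frequency piece to get $C_\Gamma h^{-1/2}\|u_{\mathbf a}\|$, and Lemma~\ref{l:u-fourier} on the high-frequency tail. The paper's version is simply terser and omits the norm-equivalence remark and the alternative $\Op_h$ phrasing you mention.
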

\begin{proof}
We use the Fourier inversion formula:
$
u_{\mathbf a}(x)=(2\pi h)^{-1}\int_{\mathbb R}e^{{i}x\xi/h}\, \mathcal F_h u_{\mathbf a} ( \xi ) \,d\xi.
$
We split this integral into two parts. The first one is the integral over $\{|\xi|\leq \frac{3K}2\}$, which
is bounded by $C_\Gamma h^{-1} \|\mathcal F_h u_{\mathbf a}\| = C_\Gamma 
h^{-1/2}\|u_{\mathbf a}\|$.
The second one, the integral over $\{|\xi|\geq \frac{3K} 2\}$, is bounded by
$C_{\Gamma,N}h^N\sup_{\mathbf I}|u|$ by Lemma~\ref{l:u-fourier}.
\end{proof}
Since the intervals $I_{\mathbf a}$, $\mathbf a\in Z(h^\rho)$, do not intersect
and are contained in $\mathbf I$, we have $\#(Z(h^\rho))\leq C_\Gamma h^{-1}$.
Combining this with~\eqref{e:reduced-norm} and~\eqref{e:l2-reduced}, we get
the following 
bound:
\begin{equation}
  \label{e:sup-l2}
\sup_{\mathbf I}|u|
\leq C_\Gamma \sum_{\mathbf a\in Z(h^\rho)}\sup|u_{\mathbf a}|
\leq C_\Gamma h^{-1}\bigg(\sum_{\mathbf a\in Z(h^\rho)}\|u_{\mathbf a}\|^2\bigg)^{1/2}.
\end{equation}

\subsection{End of the proof}
  \label{s:proof-end}

We use \eqref{e:transfer2} together with equivariance of the operator $\mathcal B(s)$
(Lemma~\ref{l:B-equiv}) to obtain a formula for $u$ in terms of $\mathcal B(s)$,
see~\eqref{e:u-formula} below. Together with the fractal uncertainty bound~\eqref{e:fup}
this will give $u\equiv 0$.

In order to take advantage of the equivariance of $\mathcal B(s)$, we approximate  $u_{\mathbf a}$'s
appearing in \eqref{e:transfer2} by functions in the range of
$\mathcal B(s)$.
For that, let $K$ be chosen from Lemma~\ref{l:mapper}. 
Define the partition $Z({1\over 10K})$ by~\eqref{e:Z-tau}.
Since $h$ is small and by~\eqref{e:Z-bound-2},
\begin{equation}
\label{eq:defat}
\forall \, \mathbf a\in Z(h^\rho) \quad \exists ! \ \widetilde{\mathbf a}\in Z\big(\tfrac{1}{10K}\big):\quad
\widetilde{\mathbf a}\prec\overline{\mathbf a},\quad
\widetilde{\mathbf a}\neq\overline{\mathbf a}.
\end{equation}
We stress that $10K|I_{\widetilde{\mathbf a}}|\leq 1$ and that
 $\widetilde{\mathbf a}$ lies in~$Z({1\over 10K})$ which is a finite
$h$-independent set.
Choose $h$-independent cutoff functions (see \eqref{e:I-a-prime}) 
$$
\begin{aligned}
\chi_{\widetilde{\mathbf a}}\in \CIc(I_{\widetilde{\mathbf a}}^\circ),&\quad
\supp(1-\chi_{\widetilde{\mathbf a}})\cap I'_{\widetilde{\mathbf a}}=\emptyset;\\
\chi_K\in \CIc((-2K,2K)),&\quad
\supp(1-\chi_K)\cap \big[-\tfrac{3K}{2},\tfrac{3K}{2}\big]=\emptyset
\end{aligned}
$$
and consider semiclassical pseudodifferential operators
$$
A_{\widetilde{\mathbf a}}:=\chi_{\widetilde{\mathbf a}}(x)\chi_K(hD_x)
=\Op_h\big(\chi_{\widetilde{\mathbf a}}(x)\chi_K(\xi)\big).
$$
By Lemma~\ref{l:u-fourier} and since $\supp u_{\mathbf a}\subset I_{\overline{\mathbf a}}\subset I'_{\widetilde{\mathbf a}}$,
we have for all $\mathbf a\in Z(h^\rho)$ and all~$N$
\begin{equation}
  \label{e:cooking-1}
\|u_{\mathbf a}-A_{\widetilde{\mathbf a}}u_{\mathbf a}\|\leq C_{\Gamma,N}h^N\sup_{\mathbf I}|u|.
\end{equation}
We now apply Lemma~\ref{l:B-invert} with $I:=I_{\widetilde{\mathbf a}}$ and $A:=A_{\widetilde{\mathbf a}}$
to write
\begin{equation}
  \label{e:cooking-2}
\begin{gathered}
A_{\widetilde{\mathbf a}}=\psi_{\widetilde{\mathbf a}}\mathcal B(s)\omega_{\widetilde{\mathbf a}}Q_{\widetilde{\mathbf a}}+\mathcal O(h^\infty)_{L^2(\mathbb R)\to L^2(\mathbb R)} 
\end{gathered}
\end{equation}
where $Q_{\widetilde{\mathbf a}}:L^2(\mathbb R)\to L^2(\dot{\mathbb R})$ is bounded
uniformly in $h$ and 
\begin{equation}
  \label{e:cooking-cutoffs}
\psi_{\widetilde{\mathbf a}}\in \CIc(I_{\widetilde{\mathbf a}}^\circ),\quad
\supp\chi_{\widetilde{\mathbf a}}\subset \{\psi_{\widetilde{\mathbf a}}=1\},\quad
\omega_{\widetilde{\mathbf a}}\in C^\infty(\dot{\mathbb R}\setminus I_{\widetilde{\mathbf a}}).
\end{equation}
Define (see Figure~\ref{f:end-cutoffs})
\begin{equation}
  \label{e:v-a-def}
v_{\mathbf a}:=\omega_{\widetilde{\mathbf a}}Q_{\widetilde{\mathbf a}}u_{\mathbf a}\in L^2(\dot{\mathbb R}),\quad
\|v_{\mathbf a}\|_{L^2(\dot{\mathbb R})}\leq C_\Gamma \|u_{\mathbf a}\|_{L^2(\mathbb R)}.
\end{equation}
Then~\eqref{e:cooking-1} and~\eqref{e:cooking-2} imply that for all $N$,
\begin{equation}
  \label{e:cooking-3}
\|u_{\mathbf a}-\psi_{\widetilde{\mathbf a}}\mathcal B(s)v_{\mathbf a}\|
\leq C_{\Gamma,N}h^N\sup_{\mathbf I}|u|.
\end{equation}
\begin{figure}
\includegraphics{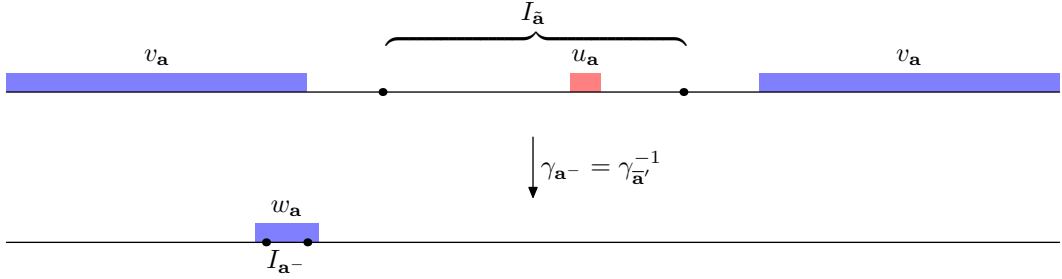}
\caption{The supports of $u_{\mathbf a},v_{\mathbf a},w_{\mathbf a}$ (shaded)
and the intervals $I_{\widetilde{\mathbf a}},I_{\mathbf a^-}$ (marked by endpoints).
The top picture is mapped to the bottom one by $\gamma_{\mathbf a^-}$.
The supports of $u_{\mathbf a},w_{\mathbf a}$ have size $\sim h^\rho$.}
\label{f:end-cutoffs}
\end{figure}

To approximate $ u $ by an element of the image of 
$ \mathcal B ( s ) $ we  use the equation~\eqref{e:transfer2} and 
the equivariance property~\eqref{e:B-equivariance}. Let $\mathbf a\in Z(h^\rho)$, $b\in\mathcal A$,
$\overline{\mathbf a}\rsa b$. By~\eqref{e:cooking-cutoffs}, we have
$\psi_{\widetilde{\mathbf a}}=1$ on $\gamma_{\overline{\mathbf a}'}(I_b)=I_{\overline{\mathbf a}}\subset I'_{\widetilde{\mathbf a}}$.
Thus by~\eqref{e:B-equivariance}
\begin{equation}
  \label{e:cooking-4}
T_{\gamma_{\overline {\mathbf a}'},s}\psi_{\widetilde{\mathbf a}}\mathcal B(s)v_{\mathbf a}
=\mathcal B(s)T_{\gamma_{\overline{\mathbf a}'},1-s}v_{\mathbf a}\quad\text{on }I_b.
\end{equation}
Substituting~\eqref{e:cooking-3} into~\eqref{e:transfer2} and using~\eqref{e:cooking-4},
we obtain the following approximate formula for $u$, true for each $N$
and $b\in\mathcal A$:
\begin{equation}
  \label{e:u-formula}
\Big\| \langle x\rangle^{2s}u - \mathcal B(s)\sum_{\mathbf a\in Z(h^\rho)\atop \overline{\mathbf a}\rsa b}
w_{\mathbf a}\Big\|_{L^2(I'_b)}\leq C_{\Gamma,N}h^N \sup_{\mathbf I}|u|
\end{equation}
where
\begin{equation}
  \label{e:w-a-def}
w_{\mathbf a}:=T_{\gamma_{\overline{\mathbf a}'},1-s}v_{\mathbf a}\in L^2(\dot{\mathbb R}).
\end{equation}
We now establish a few properties of $w_{\mathbf a}$, starting with its support:
\begin{lemm}
  \label{l:w-supp}
Let $\mathbf a=a_1\dots a_n\in Z(h^\rho)$ and put $\mathbf a^-:=a_2\dots a_n\in\mathcal W^\circ$.
Then
\begin{equation}
  \label{e:w-supp}
\supp w_{\mathbf a}\subset I_{\mathbf a^-}(C_\Gamma h^\rho)
\end{equation}
where $I(\tau)=I+[-\tau,\tau]$ denotes the $\tau$-neighbourhood of $I$.
\end{lemm}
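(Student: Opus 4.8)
The plan is to trace the support of $w_{\mathbf a}$ through the chain of definitions~\eqref{e:v-a-def}, \eqref{e:w-a-def} backwards. We have $w_{\mathbf a}=T_{\gamma_{\overline{\mathbf a}'},1-s}v_{\mathbf a}$, and the operator $T_{\gamma,1-s}$ defined in~\eqref{e:T-gamma} acts by $f\mapsto |\gamma'(x)|^{1-s}_{\mathbb S}f(\gamma(x))$, so $\supp T_{\gamma_{\overline{\mathbf a}'},1-s}f=\gamma_{\overline{\mathbf a}'}^{-1}(\supp f)=\gamma_{\mathbf a'}(\supp f)$, using $\gamma_{\overline{\mathbf a}'}=\gamma_{\mathbf a'}^{-1}$. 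Thus it suffices to locate $\supp v_{\mathbf a}$ and push it forward by $\gamma_{\mathbf a'}$. From~\eqref{e:v-a-def}, $v_{\mathbf a}=\omega_{\widetilde{\mathbf a}}Q_{\widetilde{\mathbf a}}u_{\mathbf a}$, and the cutoff $\omega_{\widetilde{\mathbf a}}\in C^\infty(\dot{\mathbb R}\setminus I_{\widetilde{\mathbf a}})$ forces $\supp v_{\mathbf a}\subset \dot{\mathbb R}\setminus I_{\widetilde{\mathbf a}}$.

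\textbf{Key steps.} First I would observe that $\supp w_{\mathbf a}\subset \gamma_{\mathbf a'}(\dot{\mathbb R}\setminus I_{\widetilde{\mathbf a}})=\dot{\mathbb C}\setminus\gamma_{\mathbf a'}(I_{\widetilde{\mathbf a}}^\circ)$ (intersected with $\dot{\mathbb R}$). Since $\widetilde{\mathbf a}\prec\overline{\mathbf a}$ by~\eqref{eq:defat}, write $\overline{\mathbf a}=\widetilde{\mathbf a}\mathbf c$ with $|\mathbf c|\geq 1$; then $\gamma_{\mathbf a'}(I_{\widetilde{\mathbf a}})=\gamma_{\overline{\mathbf a}'}^{-1}(I_{\widetilde{\mathbf a}})$ is an interval which I need to identify. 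The cleanest route is to work with the complement directly: the relevant interval, after applying $\gamma_{\mathbf a'}$, should be (a neighbourhood of) $I_{\mathbf a^-}$. To see why $\mathbf a^-=a_2\dots a_n$ appears, recall $\overline{\mathbf a}=\overline{a_n}\dots\overline{a_1}$, so $\overline{\mathbf a}'=\overline{a_n}\dots\overline{a_2}$ and $\mathbf a'=a_1\dots a_{n-1}$; the word $\mathbf a^-$ is $\mathbf a$ with its first letter dropped. I expect the identity $\gamma_{\mathbf a'}(\dot{\mathbb R}\setminus D_{\overline{a_1}})\subset I_{\mathbf a^-}(C_\Gamma h^\rho)$ or something close, coming from the Schottky covering property~\eqref{e:gamma-a-property} applied iteratively together with the size bounds~\eqref{e:Z-bound-1}--\eqref{e:Z-bound-2} on the intervals $I_{\mathbf a}$, $I_{\overline{\mathbf a}}$ for $\mathbf a\in Z(h^\rho)$. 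Concretely, since $\widetilde{\mathbf a}$ is a proper prefix of $\overline{\mathbf a}$, $I_{\widetilde{\mathbf a}}$ contains $I_{\overline{\mathbf a}}$, hence $\gamma_{\mathbf a'}(I_{\widetilde{\mathbf a}})$ contains $\gamma_{\mathbf a'}(I_{\overline{\mathbf a}})=I_{\mathbf a'}\cap(\dots)$; I would track that $\gamma_{\mathbf a'}$ sends $I_{\overline{\mathbf a}}$ to $I_b$ (with $b=a_n$, modulo the $\rsa$ bookkeeping in~\eqref{e:transfer-reminder}), and that the complement of $\gamma_{\mathbf a'}(I_{\widetilde{\mathbf a}})$ inside $I_{a_1}$ lands inside $I_{a_1}\setminus I'_{a_1}$-type pieces, which are at distance $\sim h^\rho$, while the part outside $D_{a_1}$ gets mapped by $\gamma_{a_1}$-expansion into $I_{\mathbf a^-}$ up to scale $C_\Gamma h^\rho$ via Lemma~\ref{l:gamma-der}.

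\textbf{Main obstacle.} The hard part will be the combinatorial/geometric bookkeeping: correctly relating $\gamma_{\mathbf a'}^{-1}=\gamma_{\overline{\mathbf a}'}$ acting on $I_{\widetilde{\mathbf a}}$ to the interval $I_{\mathbf a^-}$, keeping straight the reversals $\overline{\cdot}$ and prefix-truncations $\cdot'$, $\cdot^-$, and controlling the error at scale $C_\Gamma h^\rho$ rather than at scale $|I_{\widetilde{\mathbf a}}|$ which is $h$-independent. The size gain to scale $h^\rho$ must come from the fact that $\widetilde{\mathbf a}$, though an $h$-independent prefix, is followed inside $\overline{\mathbf a}$ by letters making $|I_{\overline{\mathbf a}}|\sim h^\rho$; equivalently, one applies the contraction~\eqref{e:Z-bound-2} together with Lemma~\ref{l:gamma-der} to see that the part of $\dot{\mathbb R}\setminus I_{\widetilde{\mathbf a}}$ which is genuinely relevant (after accounting for $\omega_{\widetilde{\mathbf a}}$ being supported away from $I_{\widetilde{\mathbf a}}$ but we only need its image) maps, under $\gamma_{\mathbf a'}$, into an $h^\rho$-neighbourhood of the single interval $I_{\mathbf a^-}$, since $\gamma_{\mathbf a'}$ contracts $\dot{\mathbb R}\setminus D_{\overline{a_1}}$ into $D_{a_1}$ and then the remaining word $a_2\dots a_{n-1}$ (the truncation of $\mathbf a^-$) contracts further. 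I would carry this out by writing $\gamma_{\mathbf a'}=\gamma_{a_1}\gamma_{(\mathbf a^-)'}$ and applying~\eqref{e:gamma-a-property} for $\gamma_{a_1}$ followed by the contraction estimate~\eqref{e:contracting} and Lemma~\ref{l:gamma-der} for $\gamma_{(\mathbf a^-)'}$, checking that $|I_{\mathbf a^-}|\leq C_\Gamma h^\rho$ so that the claimed $C_\Gamma h^\rho$-neighbourhood indeed absorbs the image.
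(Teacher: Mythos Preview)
Your overall strategy is right---trace the support through~\eqref{e:w-a-def} and~\eqref{e:v-a-def}---but there is a concrete combinatorial mistake at the very first step that derails the rest. You write $\gamma_{\overline{\mathbf a}'}=\gamma_{\mathbf a'}^{-1}$; this is false. With $\mathbf a=a_1\dots a_n$ we have $\overline{\mathbf a}=\overline{a_n}\dots\overline{a_1}$, so $\overline{\mathbf a}'=\overline{a_n}\dots\overline{a_2}$ and hence
\[
\gamma_{\overline{\mathbf a}'}^{-1}=(\gamma_{\overline{a_n}}\cdots\gamma_{\overline{a_2}})^{-1}
=\gamma_{a_2}\cdots\gamma_{a_n}=\gamma_{\mathbf a^-},
\]
not $\gamma_{\mathbf a'}=\gamma_{a_1\dots a_{n-1}}$. (In other words, $(\overline{\mathbf a})'\neq\overline{\mathbf a'}$.) This is precisely why the word $\mathbf a^-$ appears in the statement, and it also explains why your later attempt to peel off $\gamma_{a_1}$ via $\gamma_{\mathbf a'}=\gamma_{a_1}\gamma_{(\mathbf a^-)'}$ cannot work: with the wrong map, $\dot{\mathbb R}\setminus I_{\widetilde{\mathbf a}}$ contains points in $I_{\overline{a_{n-1}}}$, and $\gamma_{a_{n-1}}$ applied there is \emph{expanding}, so the image is not confined to any $h^\rho$-scale interval.

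Once you use the correct map $\gamma_{\mathbf a^-}$, the argument becomes clean, as in the paper. Since $\widetilde{\mathbf a}\in Z({1\over 10K})$ is a prefix of $\overline{\mathbf a}$ of bounded length, write $\widetilde{\mathbf a}=\overline{a_n}\dots\overline{a_{n-m}}$ with $m\leq C_\Gamma$. Then $\gamma_{a_{n-m+1}\dots a_n}=\gamma_{\widetilde{\mathbf a}'}^{-1}$ sends $\dot{\mathbb R}\setminus I_{\widetilde{\mathbf a}}$ exactly to $\dot{\mathbb R}\setminus I_{\overline{a_{n-m}}}$, and applying the remaining $\gamma_{a_2\dots a_{n-m}}$ together with the Schottky property~\eqref{e:gamma-a-property} gives
\[
\gamma_{\mathbf a^-}(\dot{\mathbb R}\setminus I_{\widetilde{\mathbf a}})=I^\circ_{a_2\dots a_{n-m}}.
\]
This interval contains $I_{\mathbf a^-}$ and, since only $m\leq C_\Gamma$ letters were removed, has length $\leq C_\Gamma|I_{\mathbf a^-}|\leq C_\Gamma h^\rho$ by~\cite[Lemma~2.7]{hyperfup}, which is exactly the $h^\rho$-gain you were looking for. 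Your idea of getting the scale from Lemma~\ref{l:gamma-der} and contraction is morally correct, but the clean route is to exploit the exact identity above rather than estimate.
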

\begin{proof}
From the definition of $w_{\mathbf a}$ and~\eqref{e:cooking-cutoffs} we have
(see Figure~\ref{f:end-cutoffs})
$$
\supp w_{\mathbf a}\subset \gamma_{\overline{\mathbf a}'}^{-1}(\supp v_{\mathbf a})
\subset \gamma_{\mathbf a^-}(\dot{\mathbb R}\setminus I_{\widetilde{\mathbf a}}).
$$
Recall that $\widetilde{\mathbf a}\prec\overline{\mathbf a}$, thus
$\widetilde{\mathbf a}=\overline{a_n}\,\overline{a_{n-1}}\dots \overline{a_{n-m}}$
for some $m\leq C_\Gamma$, in particular $m\ll n$. Then by~\eqref{e:gamma-a-property}
we have $\gamma_{a_{n-m}}(\dot{\mathbb R}\setminus I_{\overline{a_{n-m}}})=I_{a_{n-m}}^\circ$ and thus
$$
\gamma_{\mathbf a^-}(\dot{\mathbb R}\setminus I_{\widetilde{\mathbf a}})
=\gamma_{a_2}\cdots \gamma_{a_{n-m}}(\dot{\mathbb R}\setminus I_{\overline{a_{n-m}}})
=I_{a_2\dots a_{n-m}}^\circ.
$$
We have $I_{a_2\dots a_{n-m}}\supset I_{\mathbf a^-}$ and by~\cite[Lemma~2.7]{hyperfup} we have
$|I_{a_2\dots a_{n-m}}|\leq C_\Gamma |I_{\mathbf a^-}|\leq C_\Gamma h^\rho$.
Therefore $I_{a_2\dots a_{n-m}}\subset I_{\mathbf a^-}(C_\Gamma h^\rho)$, finishing the proof.
\end{proof}
We next estimate the norm of $w_{\mathbf a}$. This is where the value of the parameter
$\nu$ from~\eqref{e:the-s} enters the argument.
\begin{lemm}
  \label{l:w-norm}
We have
\begin{equation}
  \label{e:w-norm}
\|w_{\mathbf a}\|
\leq C_\Gamma h^{-\rho\nu} \|u_{\mathbf a}\|.
\end{equation}
\end{lemm}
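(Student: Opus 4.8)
The plan is to compute $\|w_{\mathbf a}\|^2$ by an explicit change of variables and then bound the weight that appears against $\supp v_{\mathbf a}$.

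Write $s=\tfrac12-\nu+\tfrac ih$ as in~\eqref{e:the-s}, so that $\Re(1-s)=\tfrac12+\nu$, and recall that the volume form satisfies $dP(\gamma(x))=|\gamma'(x)|_{\mathbb S}\,dP(x)$ for $\gamma\in\SL(2,\mathbb R)$ (equivalently, $T_{\gamma,1/2}$ is unitary on $L^2(\dot{\mathbb R})$; this is immediate from the definition of $|\cdot|_{\mathbb S}$). Using $w_{\mathbf a}=T_{\gamma_{\overline{\mathbf a}'},1-s}v_{\mathbf a}$ from~\eqref{e:w-a-def} and~\eqref{e:T-gamma}, substituting $y=\gamma_{\overline{\mathbf a}'}(x)$, and using the chain rule in the form $|\gamma_{\overline{\mathbf a}'}'(\gamma_{\mathbf a^-}(y))|_{\mathbb S}=|\gamma_{\mathbf a^-}'(y)|_{\mathbb S}^{-1}$, where $\gamma_{\mathbf a^-}=\gamma_{\overline{\mathbf a}'}^{-1}$ and $\mathbf a^-$ is as in Lemma~\ref{l:w-supp}, one obtains
\[
\|w_{\mathbf a}\|^2=\int_{\dot{\mathbb R}}|\gamma_{\mathbf a^-}'(y)|_{\mathbb S}^{-2\nu}\,|v_{\mathbf a}(y)|^2\,dP(y).
\]
Since $0\le\nu<\tfrac12$ by~\eqref{e:the-s}, it suffices to prove that $|\gamma_{\mathbf a^-}'(y)|_{\mathbb S}\ge C_\Gamma^{-1}h^\rho$ for all $y\in\supp v_{\mathbf a}$; granting this, $\|w_{\mathbf a}\|^2\le C_\Gamma h^{-2\rho\nu}\|v_{\mathbf a}\|^2\le C_\Gamma h^{-2\rho\nu}\|u_{\mathbf a}\|^2$ by~\eqref{e:v-a-def}, which is~\eqref{e:w-norm}.

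To establish this lower bound one locates the pole of the Möbius transformation $\gamma_{\mathbf a^-}$. First, since the cutoffs $\omega_{\widetilde{\mathbf a}}$ come from Lemma~\ref{l:B-invert}, may be taken to vanish on a neighbourhood of $I_{\widetilde{\mathbf a}}$, and range over the fixed finite family indexed by $Z(\tfrac1{10K})$, we have $d_{\mathbb S}(\supp v_{\mathbf a},I_{\widetilde{\mathbf a}})\ge d_{\mathbb S}(\supp\omega_{\widetilde{\mathbf a}},I_{\widetilde{\mathbf a}})\ge C_\Gamma^{-1}$, hence $d_{\mathbb S}(\supp v_{\mathbf a},\partial D_{\widetilde{\mathbf a}})\ge C_\Gamma^{-1}$. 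Next, by~\eqref{e:gamma-a-property} the map $\gamma_{\overline{\mathbf a}'}$ sends $\dot{\mathbb C}\setminus D_{a_2}^\circ$ onto $D_{\overline{\mathbf a}'}$, so the pole $\gamma_{\mathbf a^-}^{-1}(\infty)=\gamma_{\overline{\mathbf a}'}(\infty)$ lies in $D_{\overline{\mathbf a}'}$; and $\widetilde{\mathbf a}$ being a proper prefix of $\overline{\mathbf a}'$ (which holds once $h$ is small, since $|\overline{\mathbf a}'|-|\widetilde{\mathbf a}|=n-1-m$ with $m\le C_\Gamma$ and $n\to\infty$), we get $D_{\overline{\mathbf a}'}\cap\mathbb R=I_{\overline{\mathbf a}'}\subset I'_{\widetilde{\mathbf a}}$, which lies at distance $\ge C_\Gamma^{-1}|I_{\widetilde{\mathbf a}}|\ge C_\Gamma^{-1}$ from $\partial I_{\widetilde{\mathbf a}}$. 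Therefore $\gamma_{\mathbf a^-}$ is univalent on $\dot{\mathbb C}\setminus D_{\widetilde{\mathbf a}}^\circ$, which it carries onto the disk $D_{a_2\dots a_{n-m}}^\circ$ (this is the computation in the proof of Lemma~\ref{l:w-supp}, together with the fact that $\gamma_{\mathbf a^-}\in\SL(2,\mathbb R)$ maps $\mathbb R$-symmetric disks to $\mathbb R$-symmetric disks); moreover $C_\Gamma^{-1}h^\rho\le|I_{a_2\dots a_{n-m}}|\le C_\Gamma h^\rho$ by~\eqref{e:Z-bound-1}, \cite[Lemma~2.7]{hyperfup}, and the relation $|I_{\mathbf a}|\sim|I_{\mathbf a^-}|$ coming from $I_{\mathbf a}=\gamma_{a_1}(I_{\mathbf a^-})$ and Lemma~\ref{l:gamma-der}. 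Applying the Koebe distortion theorem to $\gamma_{\mathbf a^-}$ on $\dot{\mathbb C}\setminus D_{\widetilde{\mathbf a}}^\circ$, a spherical disk of controlled geometry, then gives $|\gamma_{\mathbf a^-}'(y)|_{\mathbb S}\ge C_\Gamma^{-1}h^\rho$ at every $y$ with $d_{\mathbb S}(y,\partial D_{\widetilde{\mathbf a}})\ge C_\Gamma^{-1}$, in particular on $\supp v_{\mathbf a}$.

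I expect the main obstacle to be precisely this last distortion step, and in particular the bookkeeping showing that the pole of $\gamma_{\mathbf a^-}$ sits well inside $D_{\widetilde{\mathbf a}}$ (not merely inside $D_{\overline{a_n}}$) and that $\supp v_{\mathbf a}$ stays a fixed $\mathbb S$-distance from $\partial D_{\widetilde{\mathbf a}}$; once these are in hand the estimate reduces to the bounded-distortion bound already used in the proof of Lemma~\ref{l:gamma-der}, transported to the sphere. Since $\supp\omega_{\widetilde{\mathbf a}}$ necessarily contains a neighbourhood of $\infty$, it is cleanest to phrase this step spherically throughout.
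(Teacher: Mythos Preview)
Your argument is correct, and the change-of-variables identity
$\|w_{\mathbf a}\|^2=\int_{\dot{\mathbb R}} |\gamma_{\mathbf a^-}'(y)|_{\mathbb S}^{-2\nu}|v_{\mathbf a}(y)|^2\,dP(y)$
is equivalent to what the paper does. Both proofs reduce to the two-sided estimate $|\gamma_{\mathbf a^-}'(y)|_{\mathbb S}\sim h^\rho$ on $\supp v_{\mathbf a}$ (equivalently $|\gamma_{\overline{\mathbf a}'}'(x)|_{\mathbb S}\sim h^{-\rho}$ on $\supp w_{\mathbf a}\subset I_{a_2\dots a_{n-m}}$), and differ only in how they obtain it.

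The paper stays on the real line and argues combinatorially: it splits $\gamma_{\overline{\mathbf a}'}=\gamma_{\overline{a_n}\dots\overline{a_{n-m}}}\circ\gamma_{\overline{a_{n-m-1}}\dots\overline{a_2}}$ into a head of bounded length (derivative $\sim 1$) and a long tail, then for $x\in I_{a_2\dots a_{n-m}}$ inverts the tail and applies Lemma~\ref{l:gamma-der} directly to $\gamma_{a_2\dots a_{n-m-1}}$ at the resulting point of $I_{a_{n-m}}$, together with \cite[Lemma~2.7]{hyperfup}. Your route is more complex-analytic: locate the pole of $\gamma_{\mathbf a^-}$ inside $I'_{\widetilde{\mathbf a}}$ and use that $\gamma_{\mathbf a^-}$ carries the spherical disk $\dot{\mathbb C}\setminus D_{\widetilde{\mathbf a}}^\circ$ onto $D_{a_2\dots a_{n-m}}$. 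One caveat: Koebe distortion by itself only controls the \emph{ratio} $|f'(z)|/|f'(z_0)|$, so it yields the lower bound only once you know $|f'|\sim h^\rho$ at some reference point; that is precisely what your pole condition buys (a M\"obius map between disks whose pole sits a fixed distance inside the complementary disk has derivative uniformly comparable to the ratio of radii, as one sees by writing $|\gamma'(z)|=c^{-2}|z-p|^{-2}$ and reading off $c$ from the endpoint images). So your bookkeeping is exactly the right ingredient; the paper's head--tail decomposition is simply an alternative way to access the same fact through Lemma~\ref{l:gamma-der}.
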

\begin{proof}
The definition~\eqref{e:w-a-def} of $w_{\mathbf a}$ gives (recalling~\eqref{e:T-gamma})
$
w_{\mathbf a}(x)=|\gamma'_{\overline{\mathbf a}'}(x)|_{\mathbb S}^{1-s}v_{\mathbf a}(\gamma_{\overline{\mathbf a}'}(x))$ for 
$ x\in\dot{\mathbb R}.
$
Using the notation of the proof of Lemma~\ref{l:w-supp}, we can restrict
to $x\in I_{a_2\dots a_{n-m}}$.
Since $m\leq C_\Gamma$ we have
$$
C_\Gamma^{-1}|\gamma'_{\overline{a_{n-m-1}}
\dots \overline{a_2}}(x)|_{\mathbb S}\leq 
|\gamma'_{\overline{\mathbf a}'}(x)|_{\mathbb S}\leq C_\Gamma|\gamma'_{\overline{a_{n-m-1}}
\dots \overline{a_2}}(x)|_{\mathbb S}.
$$
We have for $x\in I_{a_2\dots a_{n-m}}$
$$
|\gamma'_{\overline{a_{n-m-1}}\dots \overline{a_2}}(x)|_{\mathbb S}=
|\gamma'_{a_2\dots a_{n-m-1}}(x')|_{\mathbb S}^{-1},\quad
x':=\gamma_{\overline{a_{n-m-1}}\dots \overline{a_2}}(x)\in I_{a_{n-m}}.
$$
By Lemma~\ref{l:gamma-der} and~\cite[Lemma~2.7]{hyperfup} we have
$$
C_\Gamma^{-1}h^\rho\leq |\gamma'_{a_2\dots a_{n-m-1}}(x')|_{\mathbb S}\leq C_\Gamma h^\rho,
$$
therefore
$$
C_\Gamma^{-1}h^{-\rho}\leq |\gamma'_{\overline{\mathbf a}'}(x)|_{\mathbb S}\leq C_\Gamma h^{-\rho},\quad
x\in I_{a_2\dots a_{n-m}},
$$
which using the change of variables formula and \eqref{e:the-s} implies
$$
\|w_{\mathbf a}\|_{L^2(\mathbb R)}\leq C_\Gamma h^{-\rho\nu}\|v_{\mathbf a}\|_{L^2(\dot{\mathbb R})}.
$$
Together with~\eqref{e:v-a-def} this gives~\eqref{e:w-norm}.
\end{proof}
We next rewrite the formula~\eqref{e:u-formula} in terms of the operator $\mathcal B_\chi(h)$
defined in~\eqref{e:B-chi}. Let $\chi_0\in C^\infty(\dot{\mathbb R}\times\dot{\mathbb R})$ be such that 
$$ \supp\chi_0\cap \{(x,x)\mid x\in\dot{\mathbb R}\}=\emptyset $$ 
and 
\begin{equation*}
a,b\in\mathcal A,\
a\neq b \ \ \Longrightarrow \ \ 
\supp(1-\chi_0)\cap (I_a\times I_b) =\emptyset .
\end{equation*}
Put
\begin{equation}
  \label{e:the-chi}
\chi(x,x'):=|x-x'|_{\mathbb S}^{2\nu-1} \chi_0(x,x').
\end{equation}
For each $b\in\mathcal A$ define the following compactly supported function
$$
w^{(b)}:=\sum_{\mathbf a\in Z(h^\rho)\atop \overline{\mathbf a}\rsa b}w_{\mathbf a}\in L^2(\mathbb R).
$$
Then by~\eqref{e:w-supp} we have
(identifying $\dot{\mathbb R}$ with $\mathbb S^1$ using~\eqref{e:line-circle})
$$
\mathcal B_\chi(h)w^{(b)}=\mathcal B(s)\sum_{\mathbf a\in Z(h^\rho)\atop\overline{\mathbf a}\rsa b}w_{\mathbf a}\quad\text{on }I_b.
$$
Thus~\eqref{e:u-formula} implies
\begin{equation}
  \label{e:u-formula-2}
\big\|\langle x\rangle^{2s}u-\mathcal B_\chi(h)w^{(b)}\big\|_{L^2(I'_b)}\leq C_{\Gamma,N}h^N\sup_{\mathbf I}|u|.
\end{equation}
The supports of $w_{\mathbf a}$, as well as the supports of $u_{\mathbf a}$,
lie in a $C_\Gamma h^\rho$ neighbourhood of the limit set  and have bounded overlaps. Analysing this closely will give us
\begin{lemm}
  \label{l:terminator}
Denote by $\Lambda_\Gamma(\tau)=\Lambda_\Gamma+[-\tau,\tau]$ the $\tau$-neighbourhood
of the limit set $\Lambda_\Gamma\subset\mathbb R$. Then
\begin{align}
  \label{e:terminator-1}
\bigcup_{b\in\mathcal A}\supp w^{(b)}&\subset \Lambda_\Gamma(C_\Gamma h^\rho),\\
  \label{e:terminator-2}
\max_{b\in\mathcal A}\|w^{(b)}\|^2 &\leq C_\Gamma\sum_{\mathbf a\in Z(h^\rho)}\|w_{\mathbf a}\|^2,\\
  \label{e:terminator-3}
\sum_{\mathbf a\in Z(h^\rho)}\|u_{\mathbf a}\|^2
&\leq C_\Gamma \|\indic_{\Lambda_\Gamma(C_\Gamma h^\rho)}\langle x\rangle^{2s}u\|^2.
\end{align}
\end{lemm}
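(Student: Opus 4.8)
The plan is to prove~\eqref{e:terminator-1}--\eqref{e:terminator-3} in turn, each reducing to two soft facts: every interval in sight sits at scale $h^\rho$ and meets $\Lambda_\Gamma$, and families of such intervals have bounded overlap by Lemma~\ref{l:multiplicity}. Two preliminary remarks. Since $|I_a|$ is a fixed Schottky constant for $a\in\mathcal A$ while $h^\rho\to0$, for $h$ small every $\mathbf a=a_1\dots a_n\in Z(h^\rho)$ has $n\ge2$, so $\mathbf a^-=a_2\dots a_n$ and $\overline{\mathbf a}$ lie in $\mathcal W^\circ$; and every $I_{\mathbf b}$ with $\mathbf b\in\mathcal W^\circ$ meets $\Lambda_\Gamma$, since one extends $\mathbf b$ to an infinite admissible word (each letter admits $\ge2r-1\ge1$ continuations) and the nested disks $D_{\mathbf b\mathbf c}$ shrink by~\eqref{e:contracting} to a point of $\Lambda_\Gamma$ inside $D_{\mathbf b}\cap\mathbb R=I_{\mathbf b}$. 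Writing $I_{\mathbf a}=\gamma_{a_1}(I_{\mathbf a^-})$ and using that $\gamma'_{a_1}$ is bounded above and below by Schottky constants on $D_{a_2}$ (as $a_2\neq\overline{a_1}$), together with~\eqref{e:Z-bound-1} and \cite[Lemma~2.7]{hyperfup}, gives $|I_{\mathbf a^-}|\sim h^\rho$; the analogous bound for $|I_{\overline{\mathbf a}}|$ is~\eqref{e:Z-bound-2}. With this, \eqref{e:terminator-1} is immediate from Lemma~\ref{l:w-supp}: $\supp w^{(b)}\subset\bigcup_{\mathbf a}I_{\mathbf a^-}(C_\Gamma h^\rho)$, and each $I_{\mathbf a^-}(C_\Gamma h^\rho)\subset\Lambda_\Gamma(C_\Gamma h^\rho)$ because $I_{\mathbf a^-}$ has diameter $\le C_\Gamma h^\rho$ and meets $\Lambda_\Gamma$.

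For~\eqref{e:terminator-2} I would show that the supports $\{\supp w_{\mathbf a}\}_{\mathbf a\in Z(h^\rho)}$ have multiplicity $\le C_\Gamma$, which via the pointwise Cauchy--Schwarz bound $|w^{(b)}(x)|^2\le C_\Gamma\sum_{\mathbf a}|w_{\mathbf a}(x)|^2$ and integration yields~\eqref{e:terminator-2}. This is the one place I expect genuine bookkeeping: the neighbourhood $I_{\mathbf a^-}(C_\Gamma h^\rho)$ from Lemma~\ref{l:w-supp} is not itself a Schottky interval, so I would instead use the sharper containment visible in the proof of that lemma, $\supp w_{\mathbf a}\subset I_{\mathbf c_{\mathbf a}}$ with $\mathbf c_{\mathbf a}:=a_2\dots a_{n-m}\prec\mathbf a^-$ and $m=|\widetilde{\mathbf a}|-1\le C_\Gamma$. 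Then $|I_{\mathbf c_{\mathbf a}}|\sim h^\rho$ by the same distortion estimates, so Lemma~\ref{l:multiplicity} bounds by $C_\Gamma$ the number of distinct words occurring as $\mathbf c_{\mathbf a}$ through a fixed $x$; and for each fixed value $\mathbf c$ of $\mathbf c_{\mathbf a}$ there are at most $C_\Gamma$ elements $\mathbf a\in Z(h^\rho)$ producing it, since two such differ only in the letter $a_1$ and in a suffix of length $m\le C_\Gamma$. Multiplying these two bounds controls the multiplicity.

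Finally,~\eqref{e:terminator-3} follows from the explicit form $u_{\mathbf a}=\langle x\rangle^{2s}\chi_{\mathbf a}u$ in~\eqref{e:u-a}, with $0\le\chi_{\mathbf a}\le1$ and $\supp\chi_{\mathbf a}\subset I_{\overline{\mathbf a}}$ by~\eqref{e:chi-a}, giving
$$
\sum_{\mathbf a\in Z(h^\rho)}\|u_{\mathbf a}\|^2=\int|\langle x\rangle^{2s}u(x)|^2\Big(\sum_{\mathbf a\in Z(h^\rho)}\chi_{\mathbf a}(x)^2\Big)\,dx.
$$
By~\eqref{e:Z-bound-2} the intervals $I_{\overline{\mathbf a}}$ have size $\sim h^\rho$, hence bounded overlap by Lemma~\ref{l:multiplicity}, and each meets $\Lambda_\Gamma$; therefore $\sum_{\mathbf a}\chi_{\mathbf a}^2\le C_\Gamma\indic_{\Lambda_\Gamma(C_\Gamma h^\rho)}$, which is exactly~\eqref{e:terminator-3}. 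The only real obstacle in the whole argument is the multiplicity estimate in~\eqref{e:terminator-2}; replacing the $C_\Gamma h^\rho$-neighbourhood by the honest Schottky interval $I_{\mathbf c_{\mathbf a}}$ before invoking Lemma~\ref{l:multiplicity} is the way to keep the overlap bounded.
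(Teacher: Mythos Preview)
Your proof is correct and follows essentially the same route as the paper's: for each of~\eqref{e:terminator-1}--\eqref{e:terminator-3} you use that the relevant intervals have size $\sim h^\rho$ and meet $\Lambda_\Gamma$, together with the bounded-overlap estimate of Lemma~\ref{l:multiplicity}. The one place where you are actually more careful than the paper is the multiplicity bound for~\eqref{e:terminator-2}: the paper passes to a \emph{fixed} $m$ (the maximum of $|\widetilde{\mathbf a}|-1$ over $Z(\tfrac{1}{10K})$) and then says ``the number of intervals $I_{a_2\dots a_{n-m}}$ containing a given point is bounded, which gives~\eqref{e:mul-1}'', leaving implicit that each such interval arises from only boundedly many $\mathbf a$; you make this step explicit by counting the $\leq 2r$ choices for $a_1$ and the $\leq (2r)^m$ choices for the suffix.
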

\begin{proof}
By Lemma~\ref{l:w-supp}, for each $\mathbf a\in Z(h^\rho)$ we have
$\supp w_{\mathbf a}\subset I_{\mathbf a^-}(C_\Gamma h^\rho)$.
By~\cite[Lemma~2.7]{hyperfup} the interval $I_{\mathbf a^-}$ has length
$\leq C_\Gamma h^\rho$ and it intersects the limit set,
therefore $\supp w_{\mathbf a}\subset \Lambda_\Gamma(C_\Gamma h^\rho)$.
This proves~\eqref{e:terminator-1}.

Next, we have the following multiplicity estimates:
\begin{align}
  \label{e:mul-1}
\sup_{x\in\mathbb R}\#\{\mathbf a\in Z(h^\rho)\mid x\in \supp w_{\mathbf a}\}&\leq C_\Gamma,
\\  \label{e:mul-2}
\sup_{x\in\mathbb R}\#\{\mathbf a\in Z(h^\rho)\mid x\in \supp \chi_{\mathbf a}\}&\leq C_\Gamma.
\end{align}
Both of these follow from Lemma~\ref{l:multiplicity}. Indeed, from the proof of Lemma~\ref{l:w-supp}
we see that for $\mathbf a=a_1\dots a_n\in Z(h^\rho)$, we have $\supp w_{\mathbf a}\subset I_{a_2\dots a_{n-m}}$
where $m$ is fixed large enough depending only on the Schottky data.
By~\cite[Lemma~2.7]{hyperfup} we have
$C_\Gamma^{-1}h^\rho\leq |I_{a_2\dots a_{n-m}}|\leq C_\Gamma h^\rho$.
Therefore, the number of intervals $I_{a_2\dots a_{n-m}}$ containing a given point $x$
is bounded, which gives~\eqref{e:mul-1}.
To prove~\eqref{e:mul-2}, we use that $\supp \chi_{\mathbf a}\subset I_{\overline{\mathbf a}}$
by~\eqref{e:chi-a}
and $C_\Gamma^{-1}h^\rho\leq |I_{\overline{\mathbf a}}|\leq C_\Gamma h^\rho$ by~\eqref{e:Z-bound-2}.

Now, \eqref{e:mul-1} immediately gives~\eqref{e:terminator-2}:
$$
\max_{b\in\mathcal A}\|w^{(b)}\|_{L^2(\mathbb R)}^2\leq
\int_{\mathbb R}\Big(\sum_{\mathbf a\in Z(h^\rho)}|w_{\mathbf a}(x)|\Big)^2\,dx
\leq C_\Gamma\int_{\mathbb R}\sum_{\mathbf a\in Z(h^\rho)}|w_{\mathbf a}(x)|^2\,dx.
$$
To show~\eqref{e:terminator-3}, we first note that for all $\mathbf a\in Z(h^\rho)$
we have $\supp \chi_{\mathbf a}\subset I_{\overline{\mathbf a}}\subset\Lambda_\Gamma(C_\Gamma h^\rho)$,
since $I_{\overline{\mathbf a}}$ is an interval of size $\leq C_\Gamma h^\rho$ 
intersecting the limit set. We now recall~\eqref{e:u-a}:
$$
\begin{aligned}
\sum_{\mathbf a\in Z(h^\rho)}\|u_{\mathbf a}\|_{L^2(\mathbb R)}^2
&=\int_{\Lambda_\Gamma(C_\Gamma h^\rho)}\sum_{\mathbf a\in Z(h^\rho)}|\chi_{\mathbf a}(x)|^2
\cdot |\langle x\rangle^{2s}u(x)|^2\,dx
\\&\leq
C_\Gamma\int_{\Lambda_\Gamma(C_\Gamma h^\rho)}|\langle x\rangle^{2s}u(x)|^2\,dx
\end{aligned}
$$
where in the last inequality we used~\eqref{e:mul-2}.
\end{proof}
We are now ready to prove \eqref{e:the-u} and hence finish the proof of the main theorem. From~\eqref{e:u-formula-2}
and the fact that $\Lambda_\Gamma(C_\Gamma h^\rho)\subset \bigcup_{b\in\mathcal A}I'_b$ we obtain
\begin{equation}
\label{eq:f1}  \|\indic_{\Lambda_\Gamma(C_\Gamma h^\rho)}\langle x\rangle^{2s}u\|^2
\\
\leq C_\Gamma\max_{b\in\mathcal A}\|\indic_{\Lambda_\Gamma(C_\Gamma h^\rho)}\mathcal B_\chi(h)w^{(b)}\|^2
+\mathcal O(h^\infty)\sup_{\mathbf I}|u|^2 .\end{equation}
The fractal uncertainty bound~\eqref{e:fup}
and the support property~\eqref{e:terminator-1} show that
\begin{equation}
\label{eq:f2}
\max_{b\in\mathcal A}\|\indic_{\Lambda_\Gamma(C_\Gamma h^\rho)}\mathcal B_\chi(h)w^{(b)}\|^2 
\leq 
C_\Gamma h^{2(\beta-\varepsilon)}\max_{b\in\mathcal A}\|w^{(b)}\|^2 . 
\end{equation}
The estimate~\eqref{e:terminator-2} and Lemma ~\ref{l:w-norm} give
\begin{equation}
\label{eq:f3}
h^{2(\beta-\varepsilon)}\max_{b\in\mathcal A}\|w^{(b)}\|^2  
\leq
C_\Gamma h^{2(\beta-\varepsilon-\rho\nu)}\sum_{\mathbf a\in Z(h^\rho)}\|u_{\mathbf a}\|^2. 
\end{equation}
Due to~\eqref{e:terminator-3}, the left hand side of \eqref{eq:f1} bounds the sum on the right hand side of \eqref{eq:f3}. Putting \eqref{eq:f1},
\eqref{eq:f2}, and~\eqref{eq:f3} together and using \eqref{e:sup-l2} to remove
$ \mathcal O ( h^\infty ) \sup_{\mathbf I} | u|^2 $, we obtain for $h$ small enough
\begin{equation}
  \label{e:ultimate}
\sum_{\mathbf a\in Z(h^\rho)}\|u_{\mathbf a}\|^2
\leq C_\Gamma h^{2(\beta-\varepsilon-\rho\nu)}\sum_{\mathbf a\in Z(h^\rho)}\|u_{\mathbf a}\|^2 .
\end{equation}
From~\eqref{e:the-s} we have $2(\beta-\varepsilon-\rho\nu)>0$, thus \eqref{e:ultimate}
implies that $\sum_{\mathbf a\in Z(h^\rho)}\|u_{\mathbf a}\|^2=0$ if $ 
h $ is small enough.
Recalling \eqref{e:u-a}, analyticity of $ u $ shows that 
$ u \equiv 0 $.

\section*{Appendix}

\refstepcounter{section}
\renewcommand{\thesection}{A}

We give proofs of the two lemmas about the operator $ \mathcal B ( s) $
introduced in \S \ref{s:integral-operator}.

\begin{proof}[Proof of Lemma \ref{l:B-s-bdd}]
For simplicity we assume that $\supp\chi_1,\supp\chi_2\subset \mathbb R$;
same proof applies to the general case, identifying $\dot{\mathbb R}$ with the circle
by~\eqref{e:line-circle}. The operator $(\chi_1\mathcal B(s)\chi_2)^*\chi_1\mathcal B(s)\chi_2$
has integral kernel
\begin{equation}
  \label{e:integral-kernel}
\begin{aligned}
\mathcal K(x,x'')=(2\pi h)^{-1}\int_{\dot{\mathbb R}}&
e^{{i\over h}(\Phi(x',x'')-\Phi(x',x))}|x-x'|^{2\nu-1}_{\mathbb S}|x'-x''|^{2\nu-1}_{\mathbb S}
\\&
|\chi_1(x')|^2\overline{\chi_2(x)}\chi_2(x'')\,dP(x').
\end{aligned}
\end{equation}
We have
$
\partial_{x'}\Phi(x',x)={2( x-x')^{-1}}+{2x' \langle x'\rangle^{-2}},
$
thus
$$
\big|\partial_{x'}\big(\Phi(x',x'')-\Phi(x',x)\big)\big|\geq {|x-x''|/ C}\quad\text{for }
x,x''\in\supp\chi_2,\
x'\in\supp\chi_1.
$$
Integrating by parts $N$ times in~\eqref{e:integral-kernel} we see that
$
|\mathcal K(x,x'')|\leq C_N h^{N-1}|x-x''|^{-N}.
$
Therefore
$
\sup_x\int_{\dot{\mathbb R}} |\mathcal K(x,x'')|\,dP(x'')\leq C$.
(Here we use the case $N=0$ for $|x-x''|\leq h$ and
the case $N=2$ for $|x-x''|\geq h$.)
By Schur's Lemma (see for instance \cite[Proof of Theorem 4.21]{ev-zw}) we see that
the operator
$(\chi_1\mathcal B(s)\chi_2)^*\chi_1\mathcal B(s)\chi_2$,
and thus $\chi_1\mathcal B(s)\chi_2$, is bounded on $L^2(\dot{\mathbb R})$ uniformly
in $h$.
\end{proof}

The following  technical lemma is useful to establish invertibility of $\mathcal B(s)$
in Lemma~\ref{l:B-invert}. To state it we use the standard (left) quantization
procedure \eqref{eq:qu}.
\begin{lemm}
  \label{l:bi-helper}
Assume that $\chi_1,\chi_3\in \CIc(\mathbb R)$ and $\chi_2\in C^\infty(\dot{\mathbb R})$
satisfy 
$$ \supp\chi_1\cap\supp\chi_2=\supp\chi_2\cap\supp\chi_3=\emptyset . $$
Let $q(x,\xi)\in \CIc(\mathbb R^2)$.
Define the operator
$$
B=\chi_1\mathcal B(s)\chi_2\mathcal B(1-s)\chi_3\Op_h(q):L^2(\mathbb R)\to L^2(\mathbb R).
$$
Then $B=\Op_h(b)$ where $b(x,\xi;h) \in \mathscr S ( \mathbb R^2 )$
(uniformly in~$h$) admits an asymptotic expansion:
\begin{equation}
  \label{e:bi-helper-exp}
\begin{gathered}
b(x,\xi;h)\sim \chi_1(x)\sum_{j=0}^\infty h^j L_j\big(\chi_2(x')\chi_3(x)q(x,\xi)\big)\big|_{x'=x'(x,\xi)}, \\
x'(x,\xi):=x+{2\langle x\rangle^2\over \langle x\rangle^2\xi-2x}\in\dot{\mathbb R},
\end{gathered}
\end{equation}
where $L_j$ is an order $2j$ differential operator on $\mathbb R_{x}\times \dot{\mathbb R}_{x'}$ with coefficients
depending on~$x,\xi,\nu$, and
$L_0=1/2$.
\end{lemm}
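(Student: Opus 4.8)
\emph{Proof plan.} The plan is to realize $B$ as an oscillatory integral operator, reduce it by one stationary phase computation to a single oscillatory integral in one variable, and then convert that integral into the standard amplitude form of a semiclassical pseudodifferential operator by an explicit change of variables; the expansion~\eqref{e:bi-helper-exp} then falls out of the stationary phase and quantization calculi. (Conceptually this reflects the fact that $\mathcal B(s)\mathcal B(1-s)$ is microlocally a scalar operator, the scalar being $1/2$ in our normalization.) First I would write out the Schwartz kernel of $B$. Using the oscillatory integral representations of $\mathcal B(s),\mathcal B(1-s)$ (legitimate here because the cutoffs have disjoint supports) together with~\eqref{eq:qu},
\[
K_B(x,y)=(2\pi h)^{-2}\int\!\!\int\!\!\int e^{\frac i h\Psi}\,\chi_1(x)\chi_2(x')\chi_3(x'')q(x'',\eta)\,a(x,x',x'')\,dx'\,dx''\,d\eta,\qquad \Psi:=\Phi(x,x')-\Phi(x',x'')+(x''-y)\eta,
\]
where $a$ gathers the factors $|x-x'|_{\mathbb S}^{2\nu-1}|x'-x''|_{\mathbb S}^{-1-2\nu}$ and the densities from $dP$; since $\supp\chi_2$ lies a fixed distance from $\supp\chi_1\cup\supp\chi_3$, $a$ is smooth with all derivatives bounded on the integration domain. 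As $\partial_\eta\Psi=x''-y$ is bounded below on $\supp\chi_3\ni x''$ whenever $y$ lies outside a fixed neighbourhood of $\supp\chi_3$, repeated integration by parts in $\eta$ gives $K_B=\mathcal O(h^\infty)$ there, so I may restrict to $y$ near $\supp\chi_3$.

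Next I would apply stationary phase in $(x'',\eta)$, with $(x,y,x')$ frozen. The unique critical point is $x''=y$, $\eta=\eta_0(x',y):=\partial_{x''}\Phi(x',x'')|_{x''=y}$, and the Hessian in $(x'',\eta)$ has $\partial_\eta^2\Psi=0$, $\partial_{x''}\partial_\eta\Psi=1$, hence determinant $-1$ and signature $0$ (the surviving $x'$-direction reflects $\varkappa\circ\varkappa^{-1}=\mathrm{id}$). This produces
\[
K_B(x,y)=(2\pi h)^{-1}\chi_1(x)\chi_3(y)\int e^{\frac i h\,\phi(x,x',y)}\,\widetilde a(x,x',y;h)\,\chi_2(x')\,dx',\qquad \phi:=\Phi(x,x')-\Phi(x',y),
\]
with $\widetilde a\sim\sum_{k\ge0}h^k\widetilde a_k$, the $\widetilde a_k$ given by order-$2k$ stationary phase operators applied to $a\,q$ and $\widetilde a_0=a(x,x',y)\,q(y,\eta_0(x',y))$.

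The crucial point is then an algebraic identity: because the $\langle x'\rangle$-factors in $\Phi(x,x')$ and $\Phi(x',y)$ cancel,
\[
\phi(x,x',y)=-2\log\frac{|x-x'|}{|x'-y|}+2\log\langle x\rangle-2\log\langle y\rangle,\qquad
\partial_{x'}\phi=\frac{2(x-y)}{(x-x')(x'-y)} .
\]
Hence the map $x'\mapsto \zeta:=\frac{2}{x-y}\log\frac{|x'-y|}{|x-x'|}$ — a diffeomorphism of each component of $\supp\chi_2$ (where $x'\neq x,y$) onto a bounded $\zeta$-interval, extending smoothly across $\infty$ because of the $\mathbb S^1$-normalizations — converts $\phi$ into $(x-y)\zeta+2\log\langle x\rangle-2\log\langle y\rangle$. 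Absorbing the last two terms by the further shift $\zeta\mapsto\zeta+\frac{2\log\langle x\rangle-2\log\langle y\rangle}{x-y}$, which is smooth in $(x,y)$, yields
\[
K_B(x,y)=(2\pi h)^{-1}\int e^{\frac i h(x-y)\zeta}\,\widehat a(x,y,\zeta;h)\,d\zeta,
\]
with $\widehat a$ bounded uniformly in $h$ in $\CIc(\mathbb R^3)$ — compactly supported in $x,y$ via $\chi_1,\chi_3$ and in $\zeta$ via $\chi_2$ — and $\widehat a\sim\sum_k h^k\widehat a_k$. Reducing this amplitude to the left quantization (Taylor expansion at $y=x$, cf.~\cite[\S4.4]{ev-zw}) gives $B=\Op_h(b)$ with $b\in\mathscr S(\mathbb R^2)$ uniformly in $h$ and an expansion $b\sim\chi_1(x)\sum_j h^j L_j(\cdots)$, each $L_j$ being the order-$2j$ differential operator that combines the stationary phase coefficients with the quantization correction terms.

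Finally I would pin down the leading term and the evaluation point. Setting $y=x$, the substitution gives $\zeta=\tfrac2{x'-x}$, so after the shift the momentum is $\xi=\tfrac2{x'-x}+\tfrac{2x}{\langle x\rangle^2}=\eta_0(x',x)$, which inverts to $x'(x,\xi)=x+\tfrac{2\langle x\rangle^2}{\langle x\rangle^2\xi-2x}$, exactly as in~\eqref{e:bi-helper-exp}. Collecting the constants at $y=x$ — the two normalizations $(2\pi h)^{-1/2}$, the stationary phase prefactor $2\pi h\,|{-1}|^{-1/2}=2\pi h$, the Jacobian $|\partial x'/\partial\zeta|=\tfrac12|x-x'||x'-y|$, the power $|x-x'|_{\mathbb S}^{2\nu-1}|x'-y|_{\mathbb S}^{-1-2\nu}$, and the $dP$-densities — all the $\nu$- and $\langle\cdot\rangle$-dependent factors cancel and one is left with $\tfrac12\,\chi_2(x')\chi_3(x)q(x,\xi)$ evaluated at $x'=x'(x,\xi)$, i.e. $L_0=1/2$. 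I expect this last cancellation to be the only real obstacle; a minor additional point is to make the change of variables $x'\mapsto\zeta$ clean when $\supp\chi_2$ is disconnected or meets $\infty$ (a partition of unity in $x'$) and to verify that $\widehat a$, hence $b$, is genuinely Schwartz, so that the formula is compatible with the further cutoffs appearing when the lemma is used.
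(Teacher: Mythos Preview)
Your argument is correct but proceeds along a different route from the paper's. The paper bypasses the Schwartz kernel entirely and uses \emph{oscillatory testing}: applying $B$ to $e_\xi(x)=e^{ix\xi/h}$ writes $b(x,\xi;h)$ directly as a \emph{double} oscillatory integral over $(x',x'')$ with phase $\Psi=\Phi(x,x')-\Phi(x',x'')+(x''-x)\xi$, and a single two-variable stationary phase (critical point $x''=x$, $x'=x'(x,\xi)$; Hessian of signature~$0$ and determinant $4(x-x'(x,\xi))^{-4}$) yields the whole expansion at once. The Schwartz property follows from $\supp b\subset\{x\in\supp\chi_1\}$ together with integration by parts in $x''$ for large $|\xi|$. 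Your route --- stationary phase in $(x'',\eta)$, then the explicit change of variables $x'\mapsto\zeta$ linearizing $\phi$, then the amplitude-to-left-symbol reduction --- is essentially the FIO composition calculus carried out by hand, and has the virtue of making transparent \emph{why} the formula for $x'(x,\xi)$ appears (as the inverse of $\xi=\eta_0(x',x)$). The cost is the additional bookkeeping you already flag: handling the change of variables across components of $\supp\chi_2$ and across $x'=\infty$, and checking that both $\zeta$ and the shift $\frac{2\log\langle x\rangle-2\log\langle y\rangle}{x-y}$ extend smoothly to $x=y$. The paper's approach sidesteps all of this at the price of a slightly less illuminating Hessian computation; both arrive at $L_0=1/2$ through the same cancellation of the $\langle\cdot\rangle$- and $\nu$-dependent factors.
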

\begin{proof}
For simplicity we assume that $\supp\chi_2\subset \mathbb R$. The general case is handled
similarly, identifying $\dot{\mathbb R}$ with the circle.

For $\xi\in\mathbb R$, define the function $e_\xi(x)=\exp(ix\xi/h)$. By oscillatory testing~\cite[Theorem~4.19]{ev-zw},
we have $B=\Op_h(b)$ where $b$ is defined by the formula
$$
(Be_\xi)(x)=b(x,\xi;h)e_\xi(x).
$$
It remains to show that $b \in \mathscr S $ and had the expansion~\eqref{e:bi-helper-exp}. We compute
$$
b(x,\xi;h)=(2\pi h)^{-1}\int_{\mathbb R^2}e^{{i\over h}\Psi(x',x'';x,\xi)}
p(x',x'';x,\xi)\,dx'dx''
$$
where
$$
\begin{aligned}
\Psi&=\Phi(x,x')-\Phi(x',x'')+(x''-x)\xi,\\
p&=4\chi_1(x)\chi_2(x')\chi_3(x'')q(x'',\xi)|x-x'|_{\mathbb S}^{2\nu-1}
|x'-x''|_{\mathbb S}^{-2\nu-1}\langle x'\rangle^{-2}\langle x''\rangle^{-2}.
\end{aligned}
$$
We have
$$
\partial_{x'}\Psi
={2(x''-x)\over (x-x')(x''-x')},  \quad
\partial_{x''}\Psi
=\xi-{2x''\over\langle x''\rangle^2}+{2\over x''-x'}.
$$
It follows that $\Psi$ is a Morse function on $\{x'\neq x,\ x'\neq x''\}$ with the unique critical point given
by $x''=x$, $x'=x'(x,\xi)$ where $x'(x,\xi)$ is defined in~\eqref{e:bi-helper-exp}.

We have $\supp b\subset \{x\in\supp \chi_1\}$. Next,
for $(x',x'')\in\supp p$ and large $|\xi|$, we have
$|\partial_{x''}\Psi|\geq {1\over 2}|\xi|$. Therefore, repeated integration
by parts in $x''$ shows that $b=\mathcal O(h^\infty)_{\mathscr S(\mathbb R^2)}$ when $|x|+|\xi|$ is large.
The expansion~\eqref{e:bi-helper-exp} follows from the method of stationary phase.
To compute $L_0$ we use that the Hessian of $\Psi$ at the critical point
has signature~0 and determinant $4(x-x'(x,\xi))^{-4}$.
\end{proof}

\begin{proof}[Proof of Lemma \ref{l:B-invert}]
Choose $\psi_I,\omega_I$ satisfying the conditions in the statement of the lemma. 
It follows from the definition of $x'(x,\xi)$ in~\eqref{e:bi-helper-exp} that
$
|x-x'(x,\xi)|\geq {2/( 1+|\xi|)}.
$
Since $10K|I|\leq 1$ we have
\begin{equation}
  \label{e:bi-1}
x'(x,\xi)\notin\supp(1-\omega_I)\quad\text{for all }
x\in I,\quad
|\xi|\leq 2K.
\end{equation}
We now put
$$
Q_I (s):=\omega_I \mathcal B(1-s)\psi_I\Op_h(q):L^2(\mathbb R)\to L^2(\dot{\mathbb R})
$$
where $q(x,\xi;h)\in \CIc(\mathbb R^{2})$, to be chosen later, satisfies the support condition
$$
\supp q\subset \supp a\subset \{\psi_I(x)=1,\ |\xi|\leq 2K\}
$$
and has an asymptotic expansion
$$
q(x,\xi;h)\sim\sum_{j=0}^\infty h^jq_j(x,\xi)\quad\text{as }h\to 0.
$$
The $L^2$ boundedness of $Q_I(s)$ follows from Lemma~\ref{l:B-s-bdd},
whose proof applies to $\mathcal B(1-s)$.

By Lemma~\ref{l:bi-helper}, we have
$
\psi_I\mathcal B(s)\omega_IQ_I(s)=\Op_h(b)
$
where the symbol $b$ satisfies
$$
b(x,\xi;h)\sim\sum_{j=0}^\infty h^jb_j(x,\xi),\quad
b_j(x,\xi)={1\over 2}q_j(x,\xi)+\dots
$$
and `$\dots$' denotes terms depending on $q_0,\dots,q_{j-1}$
according to~\eqref{e:bi-helper-exp}, and in particular
supported in $\supp a$.
Here we use that on $\supp q_j\subset \supp a$
we have $\psi_I(x)=1$ and
$\omega_I(x'(x,\xi))=1$ by~\eqref{e:bi-1}.
We can now iteratively construct
$q_0,q_1,\dots$ such that $b=a+\mathcal O(h^\infty)_{\mathscr S(\mathbb R^2)}$, finishing the proof.
\end{proof}


\end{document}